\newtheorem{thrm}{Theorem}[section]
\newtheorem{prpstn}[thrm]{Proposition}
\newtheorem{lmm}[thrm]{Lemma}
\newtheorem{dfntn}[thrm]{Definition}
\newtheorem{rmrk}[thrm]{Remark}
\newtheorem{crllr}[thrm]{Corollary}
\newcommand{\mb}{\medskip\noindent}
\newcommand{\gb}{\bigskip\noindent}
\def \ll {\langle}
\def \rr {\rangle}
\def \I {\{1,\, \dots,p\}}
\def \P {\textsc{P}}
\newcommand{\R}{\mathbb R}
\def \NNN {\mathcal{N}}
\def \CCC {\mathcal{C}}
\def \ee {\mathrm{e}}
\def \hh {{\bf h}}
\def \pp {{\bf p}}
\def \qqq {{\bf q}}
\def \qq {\mathrm{q}}
\def \uu {{\bf u}}
\def \ff {{\bf U}}
\def \WW {{\bf W}}
\def \FF {{\bf F}}
\def \ff {{\bf f}}
\def \GG {{\bf G}}
\def \UU {{\bf U}}
\def \ww {{\bf w}}
\def \vv {{\bf v}}
\def \xx {{\bf x}}
\def \yy {{\bf y}}
\def \zz {{\bf z}}
\def \NN {\mathrm{N}}
\def \Qc {\tilde{Q}}
\def \bflambda {{\boldsymbol{\lambda}}}
\def \bfmu {\boldsymbol{\mu}}
\def \bfxi {\boldsymbol{\xi}}
\def \virg {\, , \,\,}
\def \dsp {\displaystyle}
\def \vsp {\vspace{6pt}}
\def \noi {\noindent}
\def\sqw{\hbox{\rlap{\leavevmode\raise.3ex\hbox{$\sqcap$}}$%
\sqcup$}}
\def\findem{\ifmmode\sqw\else{\ifhmode\unskip\fi\nobreak\hfil
\penalty50\hskip1em\null\nobreak\hfil\sqw
\parfillskip=0pt\finalhyphendemerits=0\endgraf}\fi}
\title{Numerical scheme for a whole class of sweeping process}
\date{\today }
\author{Juliette Venel\\ LAMAV \\ Universit\'e de Valenciennes et du Hainaut-Cambr\'esis\\Mont Houy 59313 Valenciennes Cedex 9
\\juliette.venel@univ-valenciennes.fr}
\begin{document}

\maketitle
\tableofcontents

\begin{abstract}
The aim of this paper is to study a whole class of first order differential inclusions, which fit into the framework of perturbed sweeping process by uniformly prox-regular sets. After obtaining well-posedness results, we propose a numerical scheme based on a prediction-correction algorithm and we prove its convergence.
 Finally we apply these results to a problem coming from the modelling of crowd motion.
\end{abstract}

\mb {\bf Key-words:} Differential inclusions - Proximal normal cone - Uniform prox-regularity - Numerical analysis - Prediction-correction algorithm. 

\mb {\bf MSC:} 34A60, 65L20.

\gb

\section{Introduction}
The study of first order differential inclusions started in the 1960s with the theory of the maximal monotone operators (see e.g.~\cite{Brezis}). Later J.-J. Moreau considered a problem involving a time-dependent multivalued operator in~\cite{Moreausweep}. He dealt with the first \textit{sweeping process} by convex sets $ C(t)$ included in a Hilbert space:
\begin{equation} \frac{d\qqq}{dt}(t) \in -\partial I_{C(t)} (\qqq(t)),
 \label{moreausweep}
\end{equation}
 where $\partial I_{C}$ is the subdifferential of the characteristic function of a convex set $C $. 
Such a situation may be visualized as a point $\qqq(t)$ moving inside $C(t) $ and being pushed by the boundary of this convex set when contact is established.
This problem can also be written :
$$\frac{d\qqq}{dt}(t) \in -\NN(C(t),\qqq(t)),$$ where $\NN(C,\qqq) $ is the proximal normal cone to $C$ at $\qqq $ (see Definition~\ref{def:N}). He developped a so called \textit{catching-up algorithm} to build discretized solutions and so proved the well-posedness of (\ref{moreausweep}) under some assumptions on the set-valued map $C(\cdot) $. More precisely, in considering some subdivision $(J_k)_k$ of the time-interval, the set-valued map $C $ is approached by a piecewise constant multifunction taking value $C_k $ on $J_k$. The associated discretized solution $\tilde{\qqq} $ defined by $$ \forall t \in J_{k+1} \virg   \tilde{\qqq}(t)=\tilde{\qqq}_{k+1} = \P_{C_{k+1}}(\tilde{\qqq}_{k}),$$ 
with $\tilde{\qqq}_0 $ fixed to the initial value, converges to the unique solution of (\ref{moreausweep}).
\bigskip 
 
\noi Since then, important improvements have been provided by weakening the convexity assumption and by considering a perturbed version of this problem:
\begin{equation} \frac{d\qqq}{dt}(t) + \NN(C(t),\qqq(t)) \ni \ff(t,\qqq(t)). 
 \label{spp}
\end{equation}

\noi In~\cite{Valadier}, M. Valadier studied sweeping process by complements of convex sets in finite dimension without perturbation. Perturbations (even multivalued perturbations) have been taken into account by C. Castaing, T.X. D\'uc H\={a}, M.D.P Monteiro Marques and M. Valadier in~\cite{CDV,Castaingper}. \\

\noi Then the main concept, which appeared to weaken the convexity assumption of sets $C(t)$, is the notion of \textit{uniform prox-regularity}. A set $C$ is said to be {\it $\eta$-prox-regular} (or uniformly prox-regular with constant $ \eta$) if the projection onto $C$ is single-valued and continuous at any point whose the distance to $C$ is smaller than $\eta$. Under this assumption, the sweeping process without perturbation was firstly treated by G.~Colombo, V.V.~Goncharov in \cite{Colombo}, by H.~Benabdellah in \cite{Benab} and later by L.~Thibault in \cite{Thibsweep} and by G. Colombo, M.D.P.~Monteiro Marques in \cite{Monteiro}. The perturbed problem was later studied by M.~Bounkhel, J.-F.~Edmond and L.~Thibault in \cite{Thibnonconv, Thibsweep, Thibrelax, Thibbv}. \\

\noi In this paper, we consider perturbed sweeping process by a set-valued map $Q:[0,T]\rightrightarrows \R^d$ satisfying that for every $t\in [0,T]$, $Q(t) $ is the intersection of complements of smooth convex sets.
Let us first specify the set-valued map $Q$.
For $i \in \I$, let $g_i: [0,T] \times \R^d \rightarrow \R$ be a convex function with respect to the second variable. 
For every $t \in [0,T]$, we introduce the sets $Q_i(t)$ defined by:
\begin{equation}
 Q_i(t):=\left\{ \qqq \in \R^d \virg g_i(t,\qqq) \geq 0 \right\},
\label{def:Qi}
\end{equation}
and the feasible set $Q(t) $ (supposed to be nonempty) is
\begin{equation}
 Q(t):=\bigcap_{i=1}^{p} Q_i(t).
\label{def:Q}
\end{equation}
The associated perturbed sweeping process can be expressed as follows:
\begin{equation}
\left\{
\begin{array}{l}
\dsp \frac{d\qqq}{dt}(t) + \NN(Q(t),\qqq(t)) \ni \ff(t,\qqq(t)) \ \textmd{for a.e. } t \in [0,T] \vsp \\
\qqq(0)=\qqq_0 \in Q(0).
\end{array}
\right.
\label{incldiff}
\end{equation}

\noi This differential inclusion can be thought as follows: the point $\qqq(t)$, submitted to the perturbation $\ff(t,\qqq(t))$, has to stay in the feasible set $Q(t)$.
To obtain well-posedness results for (\ref{incldiff}), we will make the following assumptions which ensure the uniform prox-regularity of $Q(t)$ for all $t \in [0,T] $. 
We suppose there exists $c  >0$ and for all $ t$ in $[0,T]$ open sets $U_i(t) \supset Q_i(t) $ verifying
\begin{equation}
 \tag{A0}
d_H(Q_i(t), \R^d \setminus U_i(t)) > c,
\label{Ui}
\end{equation}
where $d_H $ denotes the Hausdorff distance.
Moreover we assume there exist constants $\alpha, \beta, M >0$ such that for all $ t$ in $[0,T]$, $g_i(t,\cdot)$ belongs to $ C^2(U_i(t))$ and satisfies 
\begin{equation}
 \tag{A1}
\forall \, \qqq \in U_i(t) \virg \alpha \leq |\nabla_{\qqq} g_i (t,\qqq) | \leq \beta,
\label{gradg}
\end{equation}
\begin{equation}
 \tag{A2}
\forall \, \qqq \in \R^d \virg|\partial_{t} g_i (t,\qqq) | \leq \beta,
\label{dtg}
\end{equation}
\begin{equation}
 \tag{A3}
\forall \, \qqq \in U_i(t) \virg  |\partial_{t}\nabla_{\qqq} g_i (t,\qqq) | \leq M,
\label{dtgradg}
\end{equation}
and 
\begin{equation}
 \tag{A4}
\forall \, \qqq \in U_i(t) \virg  |\mathrm{D}_\qqq^2 g_i (t,\qqq) | \leq M.
\label{hessg}
\end{equation}
For all $t\in [0,T]$ and $\qqq \in Q(t)$, we denote by $I(t,\qqq)$ the active set at $ \qqq$
\begin{equation}
 I(t,\qqq):=\left\{i \in\I \virg  g_i(t,\qqq)= 0 \right\}
\label{def:I}
\end{equation} 
and for every $\rho >0 $, the following sets:
\begin{equation}
 I_\rho(t,\qqq):=\left\{i \in\I \virg  g_i(t,\qqq) \leq \rho \right\}.
\label{def:Irho}
\end{equation} 
In addition we assume there exist $\gamma >0 $ and $\rho >0 $ such that for all $t \in [0,T] $,
\begin{equation}
 \tag{A5}
\forall \, \qqq \in Q(t) \virg \forall \, \lambda_{i} \geq 0, \sum_{i \in I_\rho(t,\qqq)} \lambda_{i} | \nabla_\qqq \, g_i(t,\qqq)|  \leq  \gamma\left| \sum_{i \in I_\rho(t,\qqq)}  \lambda_{i} \nabla_\qqq \, g_i (t,\qqq) \right|.
\label{inegtrianginverserho}
\end{equation}
We will use the following weaker assumption too:
\begin{equation}
 \tag{A5'}
\forall \, \qqq \in Q(t) \virg \forall \, \lambda_{i} \geq 0, \sum_{i \in I(t,\qqq)} \lambda_{i} | \nabla_\qqq \, g_i(t,\qqq)|  \leq  \gamma\left| \sum_{i \in I(t,\qqq)}  \lambda_{i} \nabla_\qqq \, g_i (t,\qqq) \right|.
\label{inegtrianginverse}
\end{equation}

\noi In particular, this last assumption implies that for all $t $, the gradients of the active inequality constraints $\nabla_\qqq \, g_i (t,\qqq) $ are positive-lineary independent at all $\qqq \in Q(t)$, which is usually called the Mangasarian-Fromowitz constraint qualification (MFCQ). Conversely the MFCQ condition at a point $\qqq $ yields a local version of Inequality (\ref{inegtrianginverse}). \\


\noi The notion of uniform prox-regularity allows to adapt the catching-up algorithm because the projection onto a uniformly prox-regular set is well-defined in its neighbourhood. However, from a numerical point of view, it may be difficult to perform this projection. Here, we study a numerical scheme avoiding this difficulty, which is adapted from the one proposed in~\cite{Maurygrain, esaim, CRAS}. The idea is to replace $Q(t)$ with a convex set $\Qc(t,\qqq)$ (depending on the position).
This substitution is convenient because classical methods can be employed to compute the projection onto a convex set.  Yet this replacement raises some difficulties for the numerical analysis which are solved in proving that for every $ \qqq \in Q(t)$, the set $\Qc(t,\qqq)$ is a good local approximation of $Q(t)$ around $\qqq$.

\bigskip
\noi The paper is structured as follows: In Section~\ref{sec:math}, we describe the mathematical framework to study the differential inclusion (\ref{incldiff}). By justifying that the set-valued map $Q$ is Lipschitzian and takes  uniformly prox-regular values (Propositions~\ref{Qlip} and~\ref{Qprox}), we obtain well-posedness results for (\ref{incldiff}) in Theorem~\ref{theo:wp}. Then in Section~\ref{sec:schema} we propose a prediction-correction scheme (\ref{schema}) and prove its convergence in Theorem~\ref{theo:qhq}. Finally we apply these results in two examples in Section~\ref{sec:appli}. The first situation is a case in point and the second one comes from the modelling of crowd motion in emergency evacuation.

\section{Mathematical framework and well-posedness results}
\label{sec:math}
\subsection{Preliminaries}
Firstly we recall some definitions and properties to
specify the mathematical framework. Here we consider a finite dimension space $\R^d $ equipped with its Euclidean structure although these notions have been extended in a Hilbertian context. For more details, we refer the reader to \cite{Clarke, Bounkhel}.
\begin{dfntn}
Let $S$ be a closed subset of $\R^d $.\\ The proximal
normal cone to $S$ at $\xx$ is defined by:  $$ \NN(S,\xx):=
\left \{
\vv \in \R^d, \  \exists \alpha >0, \  \xx \in \P_S(\xx + \alpha
\vv) \right \}, $$
where  $$\P_S(\yy):=\{\zz \in S, \ d_S(\yy)= |\yy -\zz |\}, \textmd{ with } d_S(\yy):=\inf_{\zz \in S} |\yy- \zz|  $$
corresponds to the Euclidean projection onto $S$.
\label{def:N}
\end{dfntn}
\noi Note that for all $\xx \in int(S):= S \setminus \partial S,$ $\NN(S,\xx)=\{0\}. $ This concept extends in a certain way the notion of normal outward direction for a smooth manifold, as specified in the next proposition. 
\begin{prpstn}
Let $S$ be a closed subset of $\R^d$ whose boundary $\partial S$ is an oriented $C^2$ hypersurface. For each $\xx \in \partial S$, we denote by $\nu(\xx)$ the outward normal to $S$ at $\xx$.
Then, for each $\xx \in \partial S$, the proximal normal cone to $S$ at $\xx$ is generated by $\nu(\xx)$, i.e. $$ \NN(S,\xx) = \R^+ \nu(\xx).$$ 
\label{conepnlisse} 
\end{prpstn}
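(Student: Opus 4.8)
\emph{Proof plan.} The plan is to prove the two inclusions $\R^+\nu(\xx)\subseteq \NN(S,\xx)$ and $\NN(S,\xx)\subseteq \R^+\nu(\xx)$ separately, after recording the geometric reformulation of Definition~\ref{def:N}: a vector $\vv$ belongs to $\NN(S,\xx)$ if and only if there is $\alpha>0$ such that the closed ball $\overline{B}(\xx+\alpha\vv,\alpha|\vv|)$ meets $S$ only at $\xx$, i.e. $\xx$ realises the distance from the exterior centre $\xx+\alpha\vv$ to $S$. Since $\vv\in\NN(S,\xx)$ and $\lambda>0$ force $\lambda\vv\in\NN(S,\xx)$ (replace $\alpha$ by $\alpha/\lambda$), the set $\NN(S,\xx)$ is stable under positive scaling; so it suffices to test the single direction $\nu(\xx)$ for the first inclusion and to locate an arbitrary nonzero $\vv$ on the normal line for the second.

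For $\R^+\nu(\xx)\subseteq \NN(S,\xx)$ I would establish the exterior ball condition, which is where the $C^2$ hypothesis genuinely enters. Placing $\xx$ at the origin and choosing coordinates so that $\nu(\xx)=e_d$, the $C^2$ regularity of $\partial S$ lets me write $S$ locally as the subgraph $\{(\zz',z_d):\, z_d\le\psi(\zz')\}$ with $\psi\in C^2$, $\psi(0)=0$ and $\nabla\psi(0)=0$; Taylor's formula with the bound on $D^2\psi$ then gives $\psi(\zz')\le C|\zz'|^2$ for $|\zz'|$ small. A direct estimate of $|(\zz',z_d)-r e_d|^2-r^2=|\zz'|^2+z_d^2-2rz_d$ on this subgraph (splitting the cases $z_d\le 0$ and $0<z_d\le C|\zz'|^2$) shows that for every $r<1/(2C)$, small enough that $\overline{B}(r e_d,r)$ stays inside the local chart, the ball meets $S$ only at the origin. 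Hence $\xx\in\P_S(\xx+r\nu(\xx))$, so $\nu(\xx)\in\NN(S,\xx)$, and positive scaling yields $\R^+\nu(\xx)\subseteq\NN(S,\xx)$.

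For the reverse inclusion I would start from $\vv\in\NN(S,\xx)$ with $\vv\ne 0$ and $\alpha>0$ such that $\xx\in\P_S(\xx+\alpha\vv)$. Since $\partial S\subset S$, the point $\xx$ also minimises $\yy\mapsto|\yy-(\xx+\alpha\vv)|^2$ over $\partial S$; differentiating along an arbitrary $C^1$ curve $\gamma$ in $\partial S$ with $\gamma(0)=\xx$ gives the first-order condition $\ll\gamma'(0),\vv\rr=0$. As $\gamma'(0)$ ranges over the full tangent space $T_\xx\partial S$, this forces $\vv$ to be orthogonal to $T_\xx\partial S$, hence $\vv=\lambda\nu(\xx)$ for some $\lambda\in\R$. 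To fix the sign, suppose $\lambda<0$: then the centre $\xx+\alpha\vv$ lies on the inward side, and the interior point $\xx-s\nu(\xx)\in S$ (valid for small $s>0$ by smoothness of $\partial S$) satisfies $|\xx-s\nu(\xx)-(\xx+\alpha\vv)|=\alpha|\lambda|-s<\alpha|\lambda|=|\xx-(\xx+\alpha\vv)|$, contradicting the minimality of $\xx$. Therefore $\lambda>0$ and $\vv\in\R^+\nu(\xx)$.

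The main obstacle is the exterior ball condition of the second paragraph: it is the only place where second-order regularity is really used, and making the radius threshold explicit requires the quadratic control $\psi(\zz')\le C|\zz'|^2$ coming from the bound on the Hessian of the local graph, together with the remark that for small $r$ points of $S$ outside the chart are automatically farther than $r$ from the centre. The reverse inclusion is comparatively routine, resting only on first-order optimality and an elementary sign comparison.
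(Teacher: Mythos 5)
Your proof is correct, but there is nothing in the paper to compare it against line by line: the paper does not prove this proposition, it simply refers the reader to Proposition~3.2 of~\cite{thesejv}. Your two-inclusion strategy is the standard self-contained argument and both halves are sound: the exterior-ball estimate via a local $C^2$ graph with the Taylor bound $\psi(\zz')\le C|\zz'|^2$ correctly yields $\R^+\nu(\xx)\subseteq\NN(S,\xx)$ for radii $r<1/(2C)$ small enough that the ball stays in the chart, and the converse via first-order optimality along curves of $\partial S$ plus the sign comparison with the inward point $\xx-s\nu(\xx)$ is complete. Two small points deserve attention. First, your opening reformulation (``the closed ball $\overline{B}(\xx+\alpha\vv,\alpha|\vv|)$ meets $S$ only at $\xx$'') is strictly stronger than the definition, which only requires $\xx$ to realise the distance (other points of the bounding sphere may realise it too); this is harmless because you prove the stronger property where it suffices and invoke only the weaker one in the reverse inclusion, but the ``i.e.'' should be softened. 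Second, writing $S$ locally as the subgraph $\{z_d\le\psi(\zz')\}$ uses not only the $C^2$ regularity of $\partial S$ but also the fact that $S$ lies locally on one side of its boundary; this one-sidedness is implicit in the statement (otherwise ``the outward normal to $S$'' is meaningless, and the claim is in fact false for a degenerate set such as $S$ equal to a hyperplane, where $\NN(S,\xx)=\R\,\nu(\xx)$ is the whole normal line), and it is also what legitimises your use of $\xx-s\nu(\xx)\in S$ in the sign argument, so it is worth one explicit sentence.
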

\noindent We refer the reader to Proposition 3.2 of~\cite{thesejv} for a detailed proof. \\ 
\noindent Using~\cite{Clarke}, we recall the concept of uniform prox-regularity as follows:
\begin{dfntn}
Let $S$ be a closed subset of $\R^d$. $S$ is said $\eta$-prox-regular if for
all $\xx \in \partial S $ and $\vv \in \NN(S,\xx), \  |\vv|=1 $ we have:
$$ B(\xx+\eta \vv, \eta)\cap S = \emptyset.  $$
Equivalently, $S$ is $\eta$-prox-regular if for
all $\yy \in S$, $\xx \in \partial S $ and $\vv \in \NN(S,\xx)$, 
$$ \langle \vv , \yy -\xx \rangle \leq \frac{|\vv|}{2 \eta} |\yy - \xx|^2.$$
\label{caracpr}
\end{dfntn}
\noindent In other words, $S$ is $\eta$-prox-regular if any external ball with radius smaller than $\eta$ can be rolled around it (see Fig~\ref{fig:ensproxreg}).  
Moreover, this definition ensures that the projection onto such a set is well-defined and is continuous in
its neighbourhood. Note that a closed convex set $C \subset \R^d$ is $\infty$-prox-regular.
\begin{figure}
\centering
\psfrag{e}[c]{$\eta$}
\psfrag{x}[c]{$\xx$}
\psfrag{s}[c]{$S$}
\includegraphics[width=0.23\textwidth]{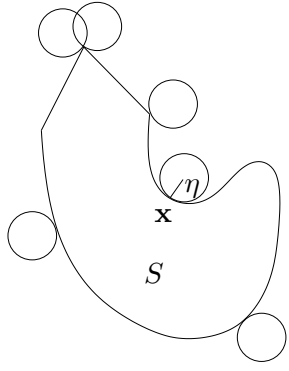}
\caption{$\eta $-prox-regular set.}
\label{fig:ensproxreg} 
\end{figure} 
\noindent We claim a technical lemma which will be useful later.
\begin{lmm}
Let $S$ be a closed convex set included in $\R^d$. Then for all  $\xx \in
S$ and  $\ww \in \R^d$:
 $$
\begin{array}{llll}
\ww \in \NN(S,\xx) &\stackrel{\textmd{def}}{\Leftrightarrow}& \xx=
P_S(\xx+\ww)&(a) \vsp \\
& \Leftrightarrow & \forall \yy \in S, \ \langle \ww,\yy -\xx \rangle \leq 0 &
(b) \vsp \\
&\Leftrightarrow & \forall \bfxi \in \R^d, \ \langle
\ww,\bfxi \rangle \leq |\ww|\ d_S(\bfxi + \xx)& (c) \vsp \\
& \Leftrightarrow &  \exists k >0, \ \exists \delta >0, \  \forall \vv \in \R^d, \ |\vv| < \delta, \ \langle
\ww,\vv \rangle \leq k\ d_S(\vv+ \xx) & (d)
\end{array}
$$
\label{fondamental}
\end{lmm}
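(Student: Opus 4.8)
The plan is to establish the closed chain $(a)\Leftrightarrow(b)\Rightarrow(c)\Rightarrow(d)\Rightarrow(b)$, which delivers the equivalence of all four conditions at once, and to reconcile them with membership in $\NN(S,\xx)$ through a scaling remark. The whole argument rests on the variational characterization of the metric projection onto a closed convex set: since $S$ is convex (hence $\infty$-prox-regular), $P_S$ is single-valued, and for any $\pp\in\R^d$ and $\zz\in S$ one has $\zz=P_S(\pp)$ if and only if $\langle\pp-\zz,\yy-\zz\rangle\le 0$ for every $\yy\in S$. I would begin by recording this fact, together with the remark that the inequality defining $(b)$ is invariant under multiplication of $\ww$ by a positive scalar; this is what identifies $(b)$ with the ``$\exists\,\alpha>0$'' formulation of the proximal normal cone in Definition~\ref{def:N}.

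For $(a)\Leftrightarrow(b)$ I would simply apply the projection characterization with $\pp=\xx+\ww$ and $\zz=\xx$: the equality $\xx=P_S(\xx+\ww)$ then reads $\langle\ww,\yy-\xx\rangle\le 0$ for all $\yy\in S$, which is exactly $(b)$. For $(b)\Rightarrow(c)$, fix $\bfxi\in\R^d$, set $\zz=P_S(\bfxi+\xx)$ so that $d_S(\bfxi+\xx)=|\bfxi+\xx-\zz|$, and split
$$\langle\ww,\bfxi\rangle=\langle\ww,\bfxi+\xx-\zz\rangle+\langle\ww,\zz-\xx\rangle.$$
The second term is nonpositive by $(b)$ (take $\yy=\zz\in S$), while Cauchy--Schwarz bounds the first by $|\ww|\,|\bfxi+\xx-\zz|=|\ww|\,d_S(\bfxi+\xx)$, which gives $(c)$. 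The implication $(c)\Rightarrow(d)$ is then immediate: one takes $k=|\ww|$ (or $k=1$ in the degenerate case $\ww=0$) and any $\delta>0$, since the inequality of $(c)$ holds for every $\bfxi$ and in particular for all $\vv$ with $|\vv|<\delta$.

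The only step that genuinely uses convexity --- and the one I expect to be the crux --- is $(d)\Rightarrow(b)$. Given $\yy\in S$, I would test the hypothesis of $(d)$ along the segment $\vv=t(\yy-\xx)$ with $t\in(0,1)$ chosen small enough that $|\vv|<\delta$. The point $\vv+\xx=(1-t)\xx+t\yy$ is a convex combination of $\xx,\yy\in S$, hence lies in $S$, so $d_S(\vv+\xx)=0$. The inequality of $(d)$ then forces $t\langle\ww,\yy-\xx\rangle\le 0$, and dividing by $t>0$ gives $\langle\ww,\yy-\xx\rangle\le 0$; since $\yy\in S$ was arbitrary this is precisely $(b)$, closing the cycle.

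Apart from this, the only bookkeeping to watch is the degenerate case $\ww=0$, where every inequality holds trivially but the positivity requirement $k>0$ in $(d)$ must still be supplied by hand. With that caveat the proof reduces entirely to the projection inequality, one use of Cauchy--Schwarz, and the convexity of $S$.
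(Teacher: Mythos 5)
The paper leaves this lemma's proof to the reader, so there is no written argument to compare against; your proof is correct and complete. The cycle $(a)\Leftrightarrow(b)\Rightarrow(c)\Rightarrow(d)\Rightarrow(b)$ built on the variational characterization of the projection, one use of Cauchy--Schwarz, and convexity for the step $(d)\Rightarrow(b)$ --- together with your positive-scaling remark reconciling $(a)$ with the $\exists\,\alpha>0$ formulation in Definition~\ref{def:N} --- is exactly the elementary argument the paper intends.
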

\noindent The elementary proof is left to the reader.

\subsection{Uniform prox-regularity of sets $Q(t)$}
This subsection is devoted to specify the proximal normal cone to $Q(t)$ (defined by (\ref{def:Q})) for $ t\in [0,T]$ and to justify the uniform prox-regularity of this set under the assumptions (\ref{gradg}), (\ref{hessg}) and (\ref{inegtrianginverse}).
We point out that the time-dependence plays no role in this study.

\noi First we deal with only one constraint.
Fix $i \in \I$ and consider the set $ Q_{i}(t)$ defined by (\ref{def:Qi}).
The smoothness of the function $g_i(t,\cdot)$ allows us to apply Proposition~\ref{conepnlisse} and to deduce the expression of the proximal normal cone to $Q_{i}(t)$.
\begin{crllr}
For all $t \in [0,T]$ and $\qqq \in \partial Q_{i}(t) $, 
  $$ \NN(Q_{i}(t), \qqq)=-\R^+ \nabla_\qqq \, g_{i}(t,\qqq) . $$
\end{crllr}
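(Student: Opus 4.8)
The plan is to reduce the statement to Proposition~\ref{conepnlisse}, which already computes the proximal normal cone to a closed set whose boundary is an oriented $C^2$ hypersurface. The work then consists only in checking that $Q_i(t)$ meets these hypotheses and in identifying its outward normal $\nu(\qqq)$ with the direction $-\nabla_\qqq g_i(t,\qqq)/|\nabla_\qqq g_i(t,\qqq)|$. Since the time variable plays no role here, I would fix $t$ once and for all and abbreviate $g:=g_i(t,\cdot)$.

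First I would verify that $Q_i(t)$ is closed. Because $g$ is convex in its second argument, it is continuous on all of $\R^d$, so $Q_i(t)=\{g\geq 0\}$ is the preimage of the closed half-line $[0,+\infty)$ and is therefore closed.

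Next I would identify the boundary and establish its regularity. By continuity, every point with $g(\qqq)>0$ lies in the interior, so $\partial Q_i(t)\subset\{g=0\}$. Conversely, at a point $\qqq$ with $g(\qqq)=0$ assumption (\ref{gradg}) gives $|\nabla_\qqq g(\qqq)|\geq\alpha>0$; moving from $\qqq$ in the direction $-\nabla_\qqq g(\qqq)$ strictly decreases $g$ below $0$, so every neighbourhood of $\qqq$ meets the complement of $Q_i(t)$ and hence $\qqq\in\partial Q_i(t)$. This shows $\partial Q_i(t)=\{g=0\}$. Since $0$ is a regular value of the $C^2$ function $g$ on the open set $U_i(t)\supset Q_i(t)$ (again by the nonvanishing of the gradient in (\ref{gradg})), the implicit function theorem guarantees that this level set is an embedded $C^2$ hypersurface, oriented by the sign of $g$.

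It then remains to pin down the outward normal and invoke Proposition~\ref{conepnlisse}. As $\nabla_\qqq g(\qqq)$ points toward increasing values of $g$, that is, into the region $\{g>0\}\subset Q_i(t)$, the outward normal to $Q_i(t)$ at $\qqq$ is $\nu(\qqq)=-\nabla_\qqq g(\qqq)/|\nabla_\qqq g(\qqq)|$. Proposition~\ref{conepnlisse} then gives $\NN(Q_i(t),\qqq)=\R^+\nu(\qqq)=-\R^+\nabla_\qqq g_i(t,\qqq)$, the positive factor $1/|\nabla_\qqq g(\qqq)|$ being absorbed into the ray. The one genuinely delicate point is the orientation bookkeeping, ensuring the minus sign is produced correctly; the rest is a routine verification of the hypotheses of Proposition~\ref{conepnlisse}.
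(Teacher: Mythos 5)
Your proof is correct and takes essentially the same route as the paper, which obtains this corollary in one line by invoking Proposition~\ref{conepnlisse} for the smooth boundary $\partial Q_i(t)$. The details you supply (closedness, the identification $\partial Q_i(t)=\{g_i(t,\cdot)=0\}$ via the nonvanishing gradient of (\ref{gradg}), the $C^2$ hypersurface structure from the implicit function theorem, and the sign of the outward normal) are exactly the routine verifications the paper leaves implicit, and your orientation bookkeeping producing the minus sign is right.
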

\noindent By Definition~\ref{caracpr}, the constant of prox-regularity equals to the largest radius of a ``rolling external ball''. In order to estimate its radius, tools of differential geometry can be used. More precisely, to show that the set $Q_{i}(t)$ is uniformly prox-regular, we can apply the following theorem proved in~\cite{Delgado}.
\begin{thrm}
Let $C$ be a closed convex subset of $\R^d$ such that $\partial C$ is an oriented $C^2$ hypersurface of $\R^d$. We denote by $\nu_C(\xx)$ the outward normal to $C$ at $\xx$ and by $\rho_1(\xx),..,\rho_{d-1}(\xx) \geq 0 $ the principal curvatures of $C$ at $\xx$.
We suppose that $$ \rho := \sup_{\xx \in \partial C} \ \sup_{ 1 \leq i\leq d-1 } \ \rho_i(\xx) < \infty.$$
Then $S = \R^d \setminus int(C)$ is a $\eta$-prox-regular set with $\eta = \dsp \frac{1}{\rho}$. 
\label{Weingarten}
\end{thrm}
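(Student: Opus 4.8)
The plan is to reduce the assertion to a purely geometric \emph{interior rolling ball} property and then to extract that property from the curvature bound, using convexity to globalize a local comparison. Since $C$ is convex with $C^2$ boundary it has nonempty interior and $\partial S=\partial C$. For $\xx\in\partial S$ the outward normal to $S$ points into $C$, i.e. it is $-\nu_C(\xx)$, so Proposition~\ref{conepnlisse} gives $\NN(S,\xx)=\R^+\bigl(-\nu_C(\xx)\bigr)$, whose only unit vector is $\vv=-\nu_C(\xx)$. By Definition~\ref{caracpr}, $\eta$-prox-regularity of $S$ is therefore equivalent to $B(\xx-\eta\nu_C(\xx),\eta)\cap S=\emptyset$ for every $\xx\in\partial C$; and as $S=\R^d\setminus int(C)$ this means $B(\xx-\eta\nu_C(\xx),\eta)\subset int(C)$. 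So everything comes down to showing that at each boundary point the open ball of radius $\eta=1/\rho$, tangent from inside, lies in $int(C)$.

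First I would do the local comparison. Fix $\xx$ and choose coordinates with $\xx=0$ and $\nu_C(\xx)=e_d$, writing $\partial C$ locally as a graph $x_d=f(x')$ with $f(0)=0$, $\nabla f(0)=0$ and $C=\{x_d\le f(x')\}$ nearby. Convexity makes $f$ concave, and the principal curvatures at $\xx$ are the eigenvalues of $-D^2 f(0)$; the hypothesis thus reads $-D^2 f(0)\le\rho I$, i.e. $D^2 f(0)\ge-\tfrac1\eta I$. The upper hemisphere of the candidate ball is $x_d=h(x'):=-\eta+\sqrt{\eta^2-|x'|^2}$, with $D^2 h(0)=-\tfrac1\eta I$, so $D^2(f-h)(0)\ge 0$: the boundary lies above the sphere to second order and the ball is locally inside $C$. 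The same computation at any boundary point $\zz$, where the eigenvalues of the Weingarten map are the $\rho_i(\zz)\le\rho$, is the engine of the global step.

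Then I would globalize by continuation. For $t\in[0,\eta]$ set $B_t=B(\xx-t\nu_C(\xx),t)$; each $B_t$ touches $\partial C$ at $\xx$, and the local comparison shows $\overline{B_t}\setminus\{\xx\}\subset int(C)$ for small $t$. Let $T$ be the supremum of the $t$ with $\overline{B_t}\subset C$. If $T<\eta$, continuity of the family together with the $C^2$ regularity forces a first failure at a point $\zz\in\partial C\cap\partial B_T$ where $\partial B_T$ is tangent to $\partial C$ from inside (an interior boundary point of $C$ inside the open ball would already contradict $\overline{B_T}\subset C$, so the contact is on $\partial B_T$ and tangential). Redoing the local comparison at $\zz$, containment of $B_T$ forces the principal curvatures there to satisfy $\rho_i(\zz)\le 1/T$; but $T<\eta$ gives $1/T>\rho\ge\rho_i(\zz)$, so $D^2(f-h)(0)$ is \emph{strictly} positive definite at $\zz$ and $B_T$ is strictly inside $C$ near $\zz$, contradicting that $\zz$ is an exit point. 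Hence $T=\eta$, $\overline{B_\eta}\subset C$, and the open ball lies in $int(C)$, which is exactly the inscribed-ball property with $\eta=1/\rho$.

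The main obstacle is precisely this globalization: the curvature comparison is local, so one must rule out $\partial C$ dipping into the far side of the ball, and this is where convexity is indispensable—it prevents $\partial C$ from wrapping back and, equivalently, keeps the inner normal rays from crossing before the focal distance $1/\rho_i\ge\eta$. An alternative, more differential-geometric packaging of the same mechanism is to note that $\rho_i\le\rho$ places all focal points at distance $\ge\eta$ along inner normals, so the normal exponential map $(\yy,t)\mapsto\yy-t\nu_C(\yy)$ is a diffeomorphism on $\partial C\times[0,\eta)$ and every interior point within distance $\eta$ of $\partial C$ has a unique nearest boundary point—again the $\eta$-prox-regularity of $S$.
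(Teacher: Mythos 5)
The paper itself does not prove Theorem~\ref{Weingarten}: it quotes it from~\cite{Delgado}, and the result is essentially Blaschke's rolling theorem for convex hypersurfaces, so your proposal has to be judged as a self-contained proof of that theorem. Your reduction to the interior rolling-ball property and your local second-order comparison are both correct; the gap is in the globalization step, and it is fatal as written. Suppose $T<\eta$ and there is a second contact point $\zz\in\partial B_T\cap\partial C$, $\zz\neq\xx$. Your claimed contradiction is that $\rho_i(\zz)\le\rho<1/T$ gives strict quadratic separation of $\partial C$ from $\partial B_T$ near $\zz$, ``contradicting that $\zz$ is an exit point.'' There is no contradiction. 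The balls $B_t$, $t>T$, stay tangent at $\xx$, so their centers $c_t=\xx-t\nu_C(\xx)$ slide along the normal at $\xx$; writing $\zz-c_T=T\nu_C(\zz)$ and $t=T+\epsilon$, one computes exactly $t^2-|\zz-c_t|^2=2\epsilon T\bigl(1-\langle\nu_C(\xx),\nu_C(\zz)\rangle\bigr)>0$, since two distinct points of the sphere $\partial B_T$ have distinct outward radial directions. Thus $\zz$ lies in the \emph{open} ball $B_t$ for every $\epsilon>0$, and since $\zz\in\partial C$ the ball $B_t$ contains points outside $C$. So once a second tangency exists, the continuation is blocked no matter what the curvatures at $\zz$ are: the penetration near $\zz$ is linear in $\epsilon$ at the contact point itself, exactly where your quadratic separation vanishes. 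The statement you actually need --- that a ball of radius $T<1/\rho$ inscribed in $C$ cannot touch $\partial C$ at two distinct points --- is a genuinely global fact and is the whole content of the theorem; it cannot be extracted from a local comparison at the contact points (a thin convex ``lens'' has an inscribed disk tangent at two points where the curvature is tiny, the large curvature sitting elsewhere on the boundary).

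A tell-tale sign of the gap is that no step of your argument ever uses convexity of $C$ (your remark that convexity ``prevents $\partial C$ from wrapping back'' is an assertion, not a step), whereas the conclusion is false without it: take $C\subset\R^2$ to be the complement of two disjoint open unit disks whose closures are at distance $\epsilon$. Locally each boundary graph satisfies $-D^2f\le\rho I$ for every $\rho>0$, so all your local comparisons apply verbatim, yet $S$ (the union of the two closed disks) is only $\epsilon/2$-prox-regular --- and your continuation, run at the point of one circle nearest the other, halts precisely at a second contact point whose curvature is far below $1/T$, the configuration you declare contradictory. The closing ``alternative packaging'' has the same local-to-global gap: $1-t\rho_i>0$ makes $(\yy,t)\mapsto\yy-t\nu_C(\yy)$ an immersion, hence a \emph{local} diffeomorphism on $\partial C\times[0,\eta)$, but its global injectivity is equivalent to the rolling property and fails in the two-disk example. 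A correct global ingredient is, for instance, the support-function argument for convex bodies: the curvature bound makes $h_C-\eta|\cdot|$ sublinear, hence the support function of a convex body $K$, so $C=K\oplus\eta \overline{B(0,1)}$ and every boundary point carries an interior tangent ball of radius $\eta$; alternatively one invokes the proof in~\cite{Delgado}, as the paper does. Convexity is indispensable there, but it must actually be made to act.
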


\begin{prpstn}
For all $t \in [0,T]$ and $i \in \I$,  $Q_{i}(t)$ is $\eta_0 $-prox-regular, with $\eta_0 = \frac{\alpha}{M}$.
\label{etaij}
\end{prpstn}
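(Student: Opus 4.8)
The plan is to realize each $Q_i(t)$ as the complement of the interior of a smooth convex set and then to invoke Theorem~\ref{Weingarten}. Fix $t \in [0,T]$ and $i \in \I$, and set $C := \{\qqq \in \R^d \virg g_i(t,\qqq) \leq 0\}$. Since $g_i(t,\cdot)$ is convex, $C$ is a closed convex subset of $\R^d$. Because $|\nabla_\qqq g_i(t,\cdot)| \geq \alpha > 0$ on $U_i(t) \supset Q_i(t)$, no point of $\{g_i(t,\cdot) = 0\}$ can be a local maximizer of $g_i(t,\cdot)$ (otherwise its gradient would vanish there, contradicting (\ref{gradg})); hence $int(C) = \{g_i(t,\cdot) < 0\}$ and consequently $Q_i(t) = \R^d \setminus int(C)$. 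It therefore suffices to bound the principal curvatures of $\partial C$ and to apply Theorem~\ref{Weingarten}.

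First I would verify the smoothness hypotheses of that theorem. Since $\partial C = \{g_i(t,\cdot) = 0\}$ is contained in $Q_i(t) \subset U_i(t)$, assumption (\ref{gradg}) guarantees that $0$ is a regular value of the $C^2$ function $g_i(t,\cdot)$ there, so by the regular value theorem $\partial C$ is a $C^2$ hypersurface, oriented by the outward unit normal $\nu(\qqq) = \nabla_\qqq g_i(t,\qqq)/|\nabla_\qqq g_i(t,\qqq)|$ (the direction of increasing $g_i$, which points out of $C$). The degenerate cases $\partial C = \emptyset$ are harmless: then $Q_i(t)$ is either all of $\R^d$ or has empty boundary and is prox-regular with an arbitrary constant.

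The heart of the argument — and the step I expect to require the most care — is the curvature estimate. A direct differentiation of $\nu = \nabla_\qqq g_i/|\nabla_\qqq g_i|$ along a tangent direction shows that the Weingarten map of $\partial C$ at $\qqq$ is the restriction to the tangent space $T_\qqq \partial C = \nabla_\qqq g_i(t,\qqq)^{\perp}$ of the symmetric operator $\tfrac{1}{|\nabla_\qqq g_i(t,\qqq)|}\,\mathrm{D}_\qqq^2 g_i(t,\qqq)$, the component along $\nabla_\qqq g_i$ dropping out after tangential projection. Convexity of $g_i(t,\cdot)$ makes this operator positive semidefinite, so every principal curvature $\rho_j(\qqq)$ is nonnegative, and by the Rayleigh quotient characterization each is bounded above by the operator norm of the Hessian; using (\ref{gradg}) and (\ref{hessg}),
$$ 0 \leq \rho_j(\qqq) \leq \frac{|\mathrm{D}_\qqq^2 g_i(t,\qqq)|}{|\nabla_\qqq g_i(t,\qqq)|} \leq \frac{M}{\alpha}, \qquad \qqq \in \partial C . $$
Hence $\rho := \sup_{\qqq \in \partial C}\ \sup_{j} \rho_j(\qqq) \leq M/\alpha$.

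Finally I would conclude with Theorem~\ref{Weingarten}: the principal curvatures of $C$ being bounded by $\rho \leq M/\alpha$, the set $Q_i(t) = \R^d \setminus int(C)$ is $(1/\rho)$-prox-regular with $1/\rho \geq \alpha/M = \eta_0$. It then remains only to observe that $\eta$-prox-regularity implies $\eta'$-prox-regularity for every $0 < \eta' \leq \eta$, which is immediate from the inequality characterization of Definition~\ref{caracpr} since $\tfrac{1}{2\eta} \leq \tfrac{1}{2\eta'}$. Therefore $Q_i(t)$ is $\eta_0$-prox-regular with $\eta_0 = \alpha/M$, as claimed.
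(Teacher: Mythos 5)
Your proof is correct and follows essentially the same route as the paper: realize $Q_i(t)$ as the complement of the interior of the convex set $C=\{g_i(t,\cdot)\leq 0\}$, identify the Weingarten endomorphism as the tangential projection of $\tfrac{1}{|\nabla_\qqq g_i|}\,\mathrm{D}_\qqq^2 g_i$, bound its eigenvalues by $M/\alpha$ via (\ref{gradg}) and (\ref{hessg}), and invoke Theorem~\ref{Weingarten}. The only difference is that you spell out details the paper treats as obvious (that $int(C)=\{g_i<0\}$, the regular value argument, and the monotonicity of the prox-regularity constant), which is sound but not a new approach.
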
 
\begin{proof}
Let $t \in [0,T]$, by Assumption (\ref{gradg}), the complement of $Q_{i}(t)$ is obviously the interior of the closed convex set $C=\{ g_i(t,\cdot) \leq 0\}$ which satisfies the assumptions of Theorem~\ref{Weingarten}. The constant of prox-regularity of $Q_{i}(t)$ can be obtained by calculating its principal curvatures which are the eigenvalues of the Weingarten endomorphism. 
Let $\qqq \in  \partial Q_{i}(t)= \partial C $, the outward normal to $C$ at $\qqq$ is equal to $-\nu(\qqq)$, where $$ \nu(\qqq) =-\frac{\nabla_\qqq \, g_{i}(t,\qqq)}{|\nabla_\qqq \, g_{i}(t,\qqq)|}. $$ We can specify the expression of the Weingarten endomorphism $ \WW_\qqq$. 
For every tangent vector $\hh \in T_{\qqq}(\partial Q_{i}(t))$, 
$$ \WW_\qqq(\hh):= - \mathrm{D} \nu(\qqq)[\hh]= \frac{1}{|\nabla_\qqq \, g_i(t,\qqq)|} \P_{(\nu(\qqq))^\perp} \left(\mathrm{D}_\qqq^2 g_i(t,\qqq)[\hh] \right),$$
with $$\dsp \P_{\nu^\perp}(\xx)= \xx - \langle \xx , \nu \rangle \nu .$$
By (\ref{gradg}) and (\ref{hessg}), for all $\qqq \in \partial Q_i(t) \subset U_i(t)$, the eigenvalues of $\WW_\qqq$ are bounded by $M\alpha^{-1}$, which ends the proof according to Theorem~\ref{Weingarten} . 
\end{proof}
\noindent Now let us study the feasible set $Q(t)$ that is the intersection of
 all sets $Q_{i}(t) $. We begin to determine its proximal normal cone.
 
\begin{prpstn}
For all $t \in [0,T] $ and $\qqq \in Q(t) $, $$\NN(Q(t),\qqq )= \sum \NN(Q_{i}(t),\qqq ) = - \sum_{i \in I(t,\qqq)} \R^+ \nabla_\qqq \, g_i(t,\qqq).$$
\label{coneprox}
\end{prpstn}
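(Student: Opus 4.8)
The plan is to fix $t\in[0,T]$ (the time variable plays no role) and $\qqq\in Q(t)$, and to prove the two asserted equalities separately. The second one, $\sum_i\NN(Q_i(t),\qqq)=-\sum_{i\in I(t,\qqq)}\R^+\nabla_\qqq g_i(t,\qqq)$, is immediate from the preceding Corollary: if $i\notin I(t,\qqq)$ then $g_i(t,\qqq)>0$, so $\qqq\in int(Q_i(t))$ and $\NN(Q_i(t),\qqq)=\{0\}$; if $i\in I(t,\qqq)$ then $\qqq\in\partial Q_i(t)$ and $\NN(Q_i(t),\qqq)=-\R^+\nabla_\qqq g_i(t,\qqq)$. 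Summing over $i$ gives the claim, so only $\NN(Q(t),\qqq)=\sum_i\NN(Q_i(t),\qqq)$ remains, which I establish by double inclusion. Throughout I will use the elementary reformulation of Definition~\ref{def:N}: $\ww\in\NN(S,\qqq)$ if and only if there exists $\alpha>0$ with $\langle\ww,\yy-\qqq\rangle\le\frac{1}{2\alpha}|\yy-\qqq|^2$ for every $\yy\in S$, obtained by expanding the inequality $|\qqq+\alpha\ww-\yy|\ge\alpha|\ww|$.

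For the inclusion $\supseteq$, I take $\ww=\sum_{i\in I(t,\qqq)}\ww_i$ with $\ww_i=-\lambda_i\nabla_\qqq g_i(t,\qqq)\in\NN(Q_i(t),\qqq)$, $\lambda_i\ge0$. Since each $Q_i(t)$ is $\eta_0$-prox-regular (Proposition~\ref{etaij}), Definition~\ref{caracpr} gives, for every $\yy\in Q_i(t)$, the quadratic estimate $\langle\ww_i,\yy-\qqq\rangle\le\frac{|\ww_i|}{2\eta_0}|\yy-\qqq|^2$. For $\yy\in Q(t)$ one has $\yy\in Q_i(t)$ for all $i$, so summing over $i\in I(t,\qqq)$ and using $|\ww_i|=\lambda_i|\nabla_\qqq g_i|$ together with Assumption (\ref{inegtrianginverse}), which yields $\sum_{i\in I}|\ww_i|\le\gamma|\sum_{i\in I}\ww_i|=\gamma|\ww|$, I obtain $\langle\ww,\yy-\qqq\rangle\le\frac{\gamma|\ww|}{2\eta_0}|\yy-\qqq|^2$ for all $\yy\in Q(t)$. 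By the reformulation above (with $\alpha=\eta_0/(\gamma|\ww|)$, the case $\ww=0$ being trivial), this shows $\ww\in\NN(Q(t),\qqq)$.

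For the converse inclusion $\subseteq$ — which I expect to be the main obstacle — I take $\ww\in\NN(Q(t),\qqq)$, fix the corresponding $\alpha>0$, and argue by a Farkas/separation dichotomy applied to $-\ww$ and the family $\{\nabla_\qqq g_i(t,\qqq)\}_{i\in I(t,\qqq)}$: either $-\ww$ is a nonnegative combination of these gradients (which is precisely the desired conclusion), or there exists $d$ with $\langle\nabla_\qqq g_i,d\rangle\ge0$ for all active $i$ and $\langle\ww,d\rangle>0$. To rule out the second case I exploit the Mangasarian-Fromowitz qualification (MFCQ) implied by (\ref{inegtrianginverse}): the positive linear independence of the active gradients provides $d_0$ with $\langle\nabla_\qqq g_i,d_0\rangle>0$ for all $i\in I(t,\qqq)$, and for $\epsilon>0$ small the direction $d_\epsilon=d+\epsilon d_0$ satisfies simultaneously $\langle\nabla_\qqq g_i,d_\epsilon\rangle>0$ for all active $i$ and $\langle\ww,d_\epsilon\rangle>0$. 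A first-order Taylor expansion of $g_i(t,\cdot)$ at $\qqq$ (together with continuity for the inactive constraints) then shows $\qqq+sd_\epsilon\in Q(t)$ for all small $s>0$. Feeding these points into the quadratic inequality gives $s\langle\ww,d_\epsilon\rangle\le\frac{s^2}{2\alpha}|d_\epsilon|^2$, and letting $s\to0^+$ forces $\langle\ww,d_\epsilon\rangle\le0$, a contradiction. Hence the first alternative holds and $\ww\in-\sum_{i\in I(t,\qqq)}\R^+\nabla_\qqq g_i(t,\qqq)$.

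The delicate point is this converse inclusion: the separation argument only produces a merely feasible direction, and it is the constraint qualification extracted from (\ref{inegtrianginverse}) that upgrades it to a strictly feasible ascent direction along which one can actually move inside $Q(t)$; without this upgrade the quadratic characterization of the proximal normal cone cannot be activated. The forward inclusion, by contrast, reduces to summing the per-constraint prox-regularity estimates of Proposition~\ref{etaij} and controlling $\sum_{i\in I}|\ww_i|$ by $|\ww|$ through (\ref{inegtrianginverse}).
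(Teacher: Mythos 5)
Your proof is correct, but it takes a genuinely different route on the difficult inclusion. The paper does not prove Proposition~\ref{coneprox} itself: it refers to Proposition 2.16 of~\cite{M2AN}, and explicitly notes that the inclusion $\NN(Q(t),\qqq)\subset\sum\NN(Q_{i}(t),\qqq)$ is obtained there using the convexity of the functions $g_i(t,\cdot)$ and the gradient bound (\ref{gradg}). You replace convexity by smoothness plus the constraint qualification: a Farkas dichotomy, the MFCQ direction $d_0$ extracted from (\ref{inegtrianginverse}) via Gordan's theorem, and a first-order Taylor expansion to make $\qqq+sd_\epsilon$ feasible. Both arguments are sound, but note that under convexity your MFCQ upgrade is superfluous: if $i\in I(t,\qqq)$ and $\langle\nabla_\qqq\, g_i(t,\qqq),d\rangle\geq 0$, then $g_i(t,\qqq+sd)\geq g_i(t,\qqq)+s\langle\nabla_\qqq\, g_i(t,\qqq),d\rangle\geq 0$ for every $s\geq 0$, so the non-strict Farkas direction $d$ is itself feasible for small $s$ (inactive constraints being handled by continuity) and can be fed directly into the quadratic characterization of $\NN(Q(t),\qqq)$. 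Hence your concluding claim that the argument ``cannot be activated'' without the strictly feasible perturbation is accurate only for your convexity-free approach; the compensation is that your approach would extend to non-convex $C^1$ constraints satisfying (\ref{inegtrianginverse}), which the convexity argument cannot. A last remark: your proof of the easy inclusion, which sums the prox-regularity estimates of Proposition~\ref{etaij} and controls $\sum\lambda_i|\nabla_\qqq\, g_i(t,\qqq)|$ by (\ref{inegtrianginverse}), duplicates the paper's proof of Proposition~\ref{Qprox}; this inclusion actually needs neither uniform constants nor (\ref{inegtrianginverse}), since each $\ww_i\in\NN(Q_{i}(t),\qqq)$ obeys a quadratic inequality with its own constant $\alpha_i>0$ and these inequalities can simply be added on $Q(t)\subset Q_i(t)$ --- which is why the paper remarks that this inclusion ``always holds''.
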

\noindent We refer the reader to Proposition 2.16 in~\cite{M2AN} for a detailed proof. We emphasize that the inclusion $\sum \NN(Q_{i}(t),\qqq ) \subset \NN(Q(t),\qqq )$ always holds. However, the given proof of the other inclusion requires the convexity of functions $g_i(t,\cdot) $ and the boundedness of gradients $\nabla_\qqq \, g_i(t,\cdot)$ (Assumption~(\ref{gradg})). \\

\noindent We now come to the main result of this subsection: the set-valued map $Q$ takes uniformly prox-regular values (with a time-independent constant) which rests on the inverse triangle inequality (\ref{inegtrianginverse}).

\begin{prpstn}
For every $t \in [0,T] $, $Q(t)$ is $\eta$-prox-regular with $$ \eta = \dsp \frac{\eta_0}{\gamma}=  \dsp \frac{\alpha}{M \gamma} . $$ 
\label{Qprox}
\end{prpstn}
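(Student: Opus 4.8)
The plan is to verify directly the second characterization of $\eta$-prox-regularity stated in Definition~\ref{caracpr}: I must show that for every $\yy \in Q(t)$, every $\xx \in \partial Q(t)$ and every $\vv \in \NN(Q(t),\xx)$,
$$\langle \vv \virg \yy - \xx \rangle \leq \frac{|\vv|}{2\eta}\,|\yy - \xx|^2 .$$
The strategy is to reduce this inequality on the intersection $Q(t)$ to the per-constraint prox-regularity of each $Q_i(t)$ already obtained in Proposition~\ref{etaij}, and then to absorb the discrepancy between the two constants by means of the inverse triangle inequality~(\ref{inegtrianginverse}).

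First I would invoke Proposition~\ref{coneprox} to write any $\vv \in \NN(Q(t),\xx)$ as $\vv = -\sum_{i \in I(t,\xx)} \lambda_i \nabla_\qqq \, g_i(t,\xx)$ with coefficients $\lambda_i \geq 0$. Setting $\vv_i := -\lambda_i \nabla_\qqq \, g_i(t,\xx)$, each $\vv_i$ lies in $\NN(Q_i(t),\xx)$. Since $Q(t) \subset Q_i(t)$, the point $\yy \in Q(t)$ also belongs to $Q_i(t)$, and for each active index $i \in I(t,\xx)$ one has $g_i(t,\xx)=0$, i.e. $\xx \in \partial Q_i(t)$. Hence the $\eta_0$-prox-regularity of $Q_i(t)$ applies to each term and gives
$$\langle \vv_i \virg \yy - \xx \rangle \leq \frac{|\vv_i|}{2\eta_0}\,|\yy - \xx|^2 .$$

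Summing these inequalities over $i \in I(t,\xx)$ then yields
$$\langle \vv \virg \yy - \xx \rangle \leq \frac{|\yy - \xx|^2}{2\eta_0}\sum_{i \in I(t,\xx)} |\vv_i| = \frac{|\yy - \xx|^2}{2\eta_0}\sum_{i \in I(t,\xx)} \lambda_i\,|\nabla_\qqq \, g_i(t,\xx)| .$$
It remains only to bound the sum of the norms by the norm of the sum, which is exactly the content of~(\ref{inegtrianginverse}): one has $\sum_{i \in I(t,\xx)} \lambda_i |\nabla_\qqq \, g_i(t,\xx)| \leq \gamma \,|\sum_{i \in I(t,\xx)} \lambda_i \nabla_\qqq \, g_i(t,\xx)| = \gamma\,|\vv|$. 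Substituting this into the previous estimate produces $\langle \vv \virg \yy - \xx \rangle \leq \frac{\gamma|\vv|}{2\eta_0}|\yy - \xx|^2$, which is precisely the desired bound with $\eta = \eta_0/\gamma = \alpha/(M\gamma)$.

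The only delicate point I anticipate is the bookkeeping of the active set: one must ensure that the decomposition of $\vv$ ranges exactly over $I(t,\xx)$, so that $\xx \in \partial Q_i(t)$ for each contributing term and the single-constraint estimate is legitimately applicable (terms with $\lambda_i=0$ are harmless and may be discarded). Everything else is a plain summation, so assumption~(\ref{inegtrianginverse}) is genuinely the structural ingredient that upgrades the constant $\eta_0$ for a single constraint to the constant $\eta_0/\gamma$ for the intersection.
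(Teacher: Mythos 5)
Your proof is correct and follows essentially the same route as the paper's own argument: decompose $\vv$ via Proposition~\ref{coneprox}, apply the $\eta_0$-prox-regularity of each $Q_i(t)$ from Proposition~\ref{etaij} using the characterization in Definition~\ref{caracpr}, sum over the active set, and upgrade the constant with the inverse triangle inequality~(\ref{inegtrianginverse}). The bookkeeping point you flag (that the decomposition ranges over $I(t,\xx)$, so $\xx \in \partial Q_i(t)$ for each contributing index) is handled identically in the paper, so there is nothing to add.
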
 
\begin{proof}
Let $t \in [0,T]$ and consider $\qqq \in \partial Q(t)$ and $
\vv \in \NN(Q(t),\qqq)\setminus \{0\}$.
By Proposition~\ref{coneprox}, there exist $\alpha_i \geq 0 $ such that $$\vv = -\sum_{i \in I(t,\qqq)}
\alpha_{i} \nabla_\qqq \, g_{i}(t,\qqq).$$ 
By definition (\ref{def:I}) of $I(t,\qqq) $, $\qqq \in \partial Q_i(t)$ for every $i \in I(t,\qqq)$.
By Proposition~\ref{etaij}, for all $i \in I(t,\qqq)$, $ - \alpha_i \nabla_\qqq \, g_i(t,\qqq) \in \NN(Q_{i}(t),\qqq)$, so we have by Definition~\ref{caracpr},
\begin{equation*}
\langle  - \alpha_i \nabla_\qqq \, g_i(t,\qqq), \tilde{\qqq}-\qqq \rangle  \leq \frac{| \alpha_i \nabla_\qqq \, g_i(t,\qqq)|}{2
  \eta_{0}}|\tilde{\qqq}-\qqq|^2, \ \forall \tilde{\qqq} \in Q_{i}(t) .
\end{equation*}
Since $Q(t) \subset Q_{i}(t)$, by summing these inequalities for $ i \in I(t,\qqq)$, we obtain
\begin{equation*}
\left\langle -\sum_{i \in I(t,\qqq)} \alpha_{i}
\nabla_\qqq \, g_{i}(t,\qqq),  \tilde{\qqq}-\qqq \right\rangle \leq \sum_{i \in I(t,\qqq)}\frac{ \alpha_{i}
  |\nabla_\qqq \, g_{i}(t,\qqq)|}{2
  \eta_{0}}|\tilde{\qqq}-\qqq|^2, \ \forall \tilde{\qqq} \in Q(t).
\end{equation*}
>From (\ref{inegtrianginverse}), it follows that 
\begin{equation*}
\left \langle -\sum_{i \in I(t,\qqq)} \alpha_{i}
\nabla_\qqq \, g_{i}(t,\qqq), \tilde{\qqq}-\qqq \right \rangle \leq  \frac{\gamma}{2 \eta_0}\left|\sum_{i \in I(t,\qqq)} \alpha_{i}
  \nabla_\qqq \, g_{i}(t,\qqq)\right| |\tilde{\qqq}-\qqq|^2, \ \forall \tilde{\qqq} \in Q(t).
\end{equation*}
We deduce from Definition~\ref{caracpr} that $Q(t)$ is $\eta$-prox-regular with $\eta= \frac{\eta_0}{\gamma}.$
\end{proof}
\subsection{Lipschitz regularity of $Q$}
Here we check that $t \to Q(t) $ varies in a Lipschitz way. Aiming that, we need this technical lemma:
\begin{lmm}
 There exists $ \delta >0$ such that for all $t \in [0,T] $ and $ \qqq \in Q(t)$:
 \begin{equation} \exists \uu \in \R^d,\  |\uu|=1,\ \forall i\in I_\rho(t,\qqq) \virg \langle \nabla_\qqq \, g_i(t,\qqq),\uu\rangle \geq \delta, \label{eq:gooddir}\end{equation}
\label{gooddir}
\end{lmm}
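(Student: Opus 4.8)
The plan is to obtain the uniform direction $\uu$ through a minimax (linear-programming duality) argument, in which the inverse triangle inequality (\ref{inegtrianginverserho}) supplies the crucial uniform lower bound. Fix $t \in [0,T]$ and $\qqq \in Q(t)$; if $I_\rho(t,\qqq)=\emptyset$ the statement holds vacuously, so assume the index set is nonempty. Set $\nn_i := \nabla_\qqq\, g_i(t,\qqq)/|\nabla_\qqq\, g_i(t,\qqq)|$ for the unit normals, which are well-defined by (\ref{gradg}). It suffices to exhibit a vector $\uu$ with $|\uu|=1$ and $\langle \nn_i,\uu\rangle \geq 1/\gamma$ for every $i \in I_\rho(t,\qqq)$: indeed (\ref{gradg}) then gives $\langle \nabla_\qqq\, g_i(t,\qqq),\uu\rangle = |\nabla_\qqq\, g_i(t,\qqq)|\,\langle \nn_i,\uu\rangle \geq \alpha/\gamma$, so one may take $\delta = \alpha/\gamma$, which depends only on the fixed constants $\alpha$ and $\gamma$ and hence is independent of $t$ and $\qqq$.

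To produce such a $\uu$, I would study the value $V := \max_{|\uu|\leq 1}\ \min_{i \in I_\rho(t,\qqq)} \langle \nn_i,\uu\rangle$ and show $V \geq 1/\gamma$. Rewriting the inner minimum over the probability simplex $\Delta := \{\mu \geq 0 : \sum_i \mu_i = 1\}$ indexed by $I_\rho(t,\qqq)$ (a linear function on $\Delta$ attains its minimum at a vertex) turns $V$ into $\max_{|\uu|\leq 1}\min_{\mu\in\Delta}\langle \sum_i \mu_i \nn_i,\uu\rangle$. The payoff is bilinear and both domains are convex and compact, so the von Neumann--Sion minimax theorem lets me exchange the two operations; carrying out the inner maximum over $\uu$ by Cauchy--Schwarz yields
\[
V \;=\; \min_{\mu\in\Delta}\ \max_{|\uu|\leq 1}\ \Big\langle \sum_{i} \mu_i \nn_i,\ \uu\Big\rangle \;=\; \min_{\mu\in\Delta}\ \Big| \sum_{i} \mu_i \nn_i \Big|,
\]
the inner maximum being attained at $\uu = \sum_i \mu_i\nn_i / |\sum_i \mu_i \nn_i|$.

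It remains to bound this right-hand side below, and here (\ref{inegtrianginverserho}) enters. Applying it with $\lambda_i = \mu_i / |\nabla_\qqq\, g_i(t,\qqq)| \geq 0$ converts the inequality into $\sum_i \mu_i \leq \gamma\, |\sum_i \mu_i \nn_i|$; for $\mu \in \Delta$ this reads $|\sum_i \mu_i \nn_i| \geq 1/\gamma$. Thus $V \geq 1/\gamma > 0$, and since $V$ is positive the outer maximum is attained at some $\uu$ with $|\uu|=1$ and $\langle \nn_i,\uu\rangle \geq 1/\gamma$ for all $i \in I_\rho(t,\qqq)$, as required. Conceptually the main obstacle is recognizing that the inverse triangle inequality is exactly the dual formulation of a quantitative Mangasarian--Fromowitz condition, which is what makes the minimax exchange the right move; once that is set up the remaining steps are routine. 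The only point needing a word of care is the uniformity of $\delta$, but this is immediate: the bound $1/\gamma$ comes from the fixed constant $\gamma$ in (\ref{inegtrianginverserho}) and $\alpha$ from (\ref{gradg}), so the single value $\delta = \alpha/\gamma$ works simultaneously for all $t \in [0,T]$ and all $\qqq \in Q(t)$.
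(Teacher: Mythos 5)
Your proof is correct, but it takes a genuinely different route from the paper. The paper's argument is constructive: it uses Moreau's orthogonal decomposition of $\R^d$ into the cone $\NN_\rho(Q(t),\qqq)=-\sum_{i\in I_\rho(t,\qqq)}\R^+\nabla_\qqq\, g_i(t,\qqq)$ and its polar cone, writes each gradient as $\nabla_\qqq\, g_i = a_i + b_i$ accordingly, uses (\ref{inegtrianginverserho}) to get the lower bound $|b_i|\geq \alpha/\gamma$, and then takes $\uu$ to be the normalized sum of the polar parts $b_i$, which yields $\delta = \alpha^2/(2\gamma^2 p \beta)$. You instead recognize the statement as a two-player game and invoke von Neumann--Sion: the value $\max_{|\uu|\leq 1}\min_i \langle \nn_i,\uu\rangle$ equals $\min_{\mu\in\Delta}\bigl|\sum_i \mu_i \nn_i\bigr|$, and (\ref{inegtrianginverserho}) applied with $\lambda_i = \mu_i/|\nabla_\qqq\, g_i(t,\qqq)|$ bounds this below by $1/\gamma$; all the steps check out (the reduction of the inner minimum to the simplex, the exchange of max and min for a bilinear payoff on compact convex sets, and the normalization of the optimizer to a unit vector, which is legitimate because the value is positive). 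What each approach buys: yours is shorter, makes transparent that (\ref{inegtrianginverserho}) is exactly the dual formulation of a quantitative Mangasarian--Fromowitz condition, and produces the strictly better constant $\delta = \alpha/\gamma$, independent of $p$ and $\beta$ (note $\gamma\geq 1$ and $\alpha\leq\beta$, so $\alpha/\gamma \geq \alpha^2/(2\gamma^2 p\beta)$); the paper's is elementary and self-contained (no minimax theorem), explicitly exhibits $\uu$ from the data, and exercises the same cone-projection machinery (Moreau's decomposition) that the paper relies on elsewhere, which is presumably why the author argued that way. Either constant serves equally well in the sequel, since only positivity and uniformity of $\delta$ are used in Proposition~\ref{Qlip} and Lemma~\ref{distQc}.
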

\begin{proof}
 We set the following cone
$$ \NN_\rho(Q(t),\qqq) := -\sum_{i\in I_\rho(t,\qqq)} \R^+ \nabla_\qqq \, g_i(t,\qqq)$$
and its polar cone
$$ C_\rho(Q(t),\qqq) := \NN_\rho(Q(t),\qqq)^\circ:=\{ \ww \in \R^d \virg \forall \vv \in \NN_\rho(Q(t),\qqq) \virg \ll  \vv, \ww\rr \leq 0\} .$$
According to the classical orthogonal decomposition of a Hilbert space as the sum of mutually polar cones (see~\cite{Moreaucones}), we have:
$$ Id= \P_{\NN_\rho(Q(t),\qqq)} + \P_{C_\rho(Q(t),\qqq)},$$
where $\P$ denotes the Euclidean projection. \\
So let us consider for $i\in I_\rho(t,\qqq)$ the corresponding decomposition of $\nabla_\qqq \, g_i(t,\qqq)$:
$$ \nabla_\qqq \, g_i(t,\qqq) = a_i+b_i \in \NN_\rho(Q(t),\qqq) + C_\rho(Q(t),\qqq). $$
Assumption (\ref{gradg}) gives us: $|b_i|\leq |\nabla_\qqq \, g_i(t,\qqq)| \leq \beta$.
Since $a_i \in \NN_\rho(Q(t),\qqq) $, it can be written: $a_i = - \sum \lambda_j  \nabla_\qqq \, g_j(t,\qqq) \virg \lambda_j \geq 0$ involving $$|b_i|= |\nabla_\qqq \, g_i(t,\qqq)-a_i|= \left| \sum_{j \neq i}\lambda_j  \nabla_\qqq \, g_j(t,\qqq)+(1+ \lambda_i) \nabla_\qqq \, g_i(t,\qqq) \right|  .$$
Then using the inverse triangle inequality (\ref{inegtrianginverserho}) and Assumption (\ref{gradg}), we get~:
$$ |b_i| \geq \frac{\alpha}{\gamma} \left(\sum \lambda_j +1 \right) \geq \frac{\alpha}{\gamma}.$$
As a consequence, it comes: 
\begin{equation}
 \frac{\alpha}{\gamma} \leq |b_i| \leq \beta.
\label{bi}
\end{equation}
Since $0\in \NN_\rho(Q(t),\qqq)$ and $a_i=\P_{\NN_\rho(Q(t),\qqq)}(\nabla_\qqq \, g_i(t,\qqq))$, we obtain
\begin{align} 2\langle b_i,-\nabla_\qqq \, g_i(t,\qqq) \rangle & =  |b_i-\nabla_\qqq \, g_i(t,\qqq)|^2-|b_i|^2 - |\nabla_\qqq \, g_i(t,\qqq)|^2 \nonumber \\
& =|a_i|^2 -|b_i|^2 - |\nabla_\qqq \, g_i(t,\qqq)|^2 \nonumber \\ 
& \leq -|b_i|^2 \leq -\frac{\alpha^2}{\gamma^2}. 
\label{eq:gooddir2} 
\end{align}
Now we set 
\begin{equation}
\uu:=\frac{\sum_{i\in I_\rho(t,\qqq)} b_i}{|\sum_{i\in I_\rho(t,\qqq)} b_i|} \in C_\rho(Q(t),\qqq).
\label{defu}
\end{equation}
This is well-defined because (\ref{eq:gooddir2}) and Assumption (\ref{gradg}) imply that for any $j\in I_\rho(t,\qqq)$
\begin{equation}\left|\sum_{i\in I_\rho(t,\qqq)} b_i\right| \geq \frac{1}{\beta}\left\langle \sum_{i\in I_\rho(t,\qqq)} b_i,\nabla_\qqq \, g_j(t,\qqq) \right\rangle \geq \frac{1}{\beta}\left\langle b_j, \nabla_\qqq \, g_j(t,\qqq)      \right\rangle            \geq \frac{\alpha^2}{2\beta \gamma^2}.
\label{sombi}
\end{equation}
\noi Then (\ref{eq:gooddir}) follows from (\ref{bi}) and (\ref{sombi}) with $$ \dsp \delta=\frac{\alpha^2}{2 \gamma^2 p\beta}.$$
\end{proof}

\begin{prpstn} \label{Qlip} The set-valued map $Q$ is Lipschitz continuous with respect to the Hausdorff distance. More precisely there exists $K_L >0 $ such that $$ \forall t,s \in [0,T]  \virg d_H(Q(t),Q(s)) \leq  K_L |t-s |.$$
\end{prpstn}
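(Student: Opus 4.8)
The plan is to control the Hausdorff distance through its two one-sided excesses, and since the roles of $s$ and $t$ are symmetric it suffices to bound $\sup_{\qqq \in Q(t)} d_{Q(s)}(\qqq)$ by a constant multiple of $|t-s|$. So I fix $t,s\in[0,T]$ and $\qqq\in Q(t)$ and aim to produce a point $\bar\qqq\in Q(s)$ with $|\bar\qqq-\qqq|\le K_L|t-s|$. The starting point is that $\qqq\in Q(t)$ means $g_i(t,\qqq)\ge 0$ for every $i$, so Assumption~(\ref{dtg}) gives the time-Lipschitz bound
\begin{equation*}
g_i(s,\qqq)\ \ge\ g_i(t,\qqq)-\beta|t-s|\ \ge\ -\beta|t-s|,\qquad i\in\I,
\end{equation*}
meaning $\qqq$ fails to lie in $Q(s)$ only by an amount that is linear in $|t-s|$. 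I treat $|t-s|$ as small (below a threshold $\tau_0$ made precise along the way); the general case follows by subdividing $[s,t]$ into pieces of length $\le\tau_0$ and chaining the local estimate through the triangle inequality for $d_H$, which reproduces the same constant.

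The crucial observation is that every constraint that is violated at time $s$ is nearly active at time $t$: if $g_i(s,\qqq)<0$ then $g_i(t,\qqq)\le g_i(s,\qqq)+\beta|t-s|<\beta|t-s|\le\rho$, so $i\in I_\rho(t,\qqq)$. I may therefore feed $\qqq\in Q(t)$ into Lemma~\ref{gooddir} to obtain a unit vector $\uu$ with $\langle\nabla_\qqq\,g_i(t,\qqq),\uu\rangle\ge\delta$ for all $i\in I_\rho(t,\qqq)$. To transfer this to time $s$, Assumption~(\ref{dtgradg}) yields $|\nabla_\qqq\,g_i(s,\qqq)-\nabla_\qqq\,g_i(t,\qqq)|\le M|t-s|$, hence
\begin{equation*}
\langle\nabla_\qqq\,g_i(s,\qqq),\uu\rangle\ \ge\ \delta-M|t-s|\ \ge\ \tfrac{\delta}{2},\qquad i\in I_\rho(t,\qqq),
\end{equation*}
as soon as $|t-s|\le\delta/(2M)$. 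Setting $\bar\qqq:=\qqq+\tau\uu$ with $\tau:=2\beta|t-s|/\delta$ and using that each $g_i(s,\cdot)$ lies above its tangent (convexity), one gets for $i\in I_\rho(t,\qqq)$
\begin{equation*}
g_i(s,\bar\qqq)\ \ge\ g_i(s,\qqq)+\tau\langle\nabla_\qqq\,g_i(s,\qqq),\uu\rangle\ \ge\ -\beta|t-s|+\tau\tfrac{\delta}{2}\ =\ 0,
\end{equation*}
so all the dangerous constraints are simultaneously restored by this single displacement.

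It remains to handle the inactive constraints and to justify that the gradient bounds apply, which is where the margin $\rho$ and Assumption~(\ref{Ui}) do their work. For $i\notin I_\rho(t,\qqq)$ one has $g_i(t,\qqq)>\rho$, hence $g_i(s,\qqq)>\rho-\beta|t-s|$; convexity together with $|\nabla_\qqq\,g_i|\le\beta$ from~(\ref{gradg}) gives $g_i(s,\bar\qqq)\ge g_i(s,\qqq)-\tau\beta>\rho-\beta|t-s|-\tau\beta$, which remains positive for $|t-s|$ small. As for the domains, $g_i(s,\qqq)\ge-\beta|t-s|$ combined with the lower bound $|\nabla_\qqq\,g_i(s,\cdot)|\ge\alpha$ places $\qqq$ within distance $O(|t-s|)<c$ of $Q_i(s)$, so by~(\ref{Ui}) the point $\qqq$ lies in $U_i(s)$ and the invocations of~(\ref{gradg}) and~(\ref{dtgradg}) are legitimate. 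Collecting the two cases shows $\bar\qqq\in Q(s)$ with $|\bar\qqq-\qqq|=\tau=\tfrac{2\beta}{\delta}|t-s|$, giving the claim with $K_L$ of order $\beta/\delta$. The genuine obstacle is the simultaneous repair of all violated constraints by one common direction, which is precisely the content of Lemma~\ref{gooddir}: its uniform lower bound $\delta$—itself a consequence of the inverse triangle inequality~(\ref{inegtrianginverserho})—is exactly what keeps $\tau$ proportional to $|t-s|$ rather than blowing up, while tracking the $\rho$-margin for the inactive constraints and checking domain membership through~(\ref{Ui}) are comparatively routine.
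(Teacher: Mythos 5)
Your overall strategy is the same as the paper's: starting from $\qqq\in Q(t)$, march a distance proportional to $|t-s|$ along the good direction $\uu$ of Lemma~\ref{gooddir}, restore the constraints in $I_\rho(t,\qqq)$ via the uniform lower bound $\delta$ and the remaining ones via the margin $\rho$, then chain over subintervals. The genuine gap lies in \emph{where} you run the convexity and gradient estimates: you do them at time $s$, and every one of those steps --- the inequality $g_i(s,\bar\qqq)\ge g_i(s,\qqq)+\tau\langle\nabla_\qqq\, g_i(s,\qqq),\uu\rangle$, the bound $|\nabla_\qqq\, g_i(s,\qqq)|\le\beta$ for the inactive constraints, and above all the transfer $|\nabla_\qqq\, g_i(s,\qqq)-\nabla_\qqq\, g_i(t,\qqq)|\le M|t-s|$, which requires $\qqq\in U_i(\sigma)$ for \emph{all} $\sigma$ between $t$ and $s$ since (\ref{dtgradg}) must be integrated in time --- presupposes that $\qqq$ lies in the domains $U_i(\sigma)$. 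Your justification of this membership is circular: you invoke the lower bound $|\nabla_\qqq\, g_i(s,\cdot)|\ge\alpha$ to conclude that $\qqq$ is within $O(|t-s|)<c$ of $Q_i(s)$, but Assumption (\ref{gradg}) provides that lower bound only on $U_i(s)$, which is exactly what you are trying to establish. The implication is genuinely false without it: a convex function can stay slightly negative arbitrarily far from its nonnegativity set once its gradient is allowed to decay outside the prescribed neighbourhood (think of a suitably smoothed version of $g(x)=e^{-x}-\tfrac12$, which satisfies (\ref{gradg}) on a neighbourhood of $\{g\ge0\}$ yet admits points with $g\ge-\tfrac12$ at arbitrary distance from $\{g\ge0\}$). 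So ``small violation implies small distance'' is not a standalone fact; it needs precisely the domain information you lack.

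The paper sidesteps this trap by doing all the geometry at time $t$, where domain membership is free: since $\qqq\in Q(t)$, Assumption (\ref{Ui}) places the whole segment $[\qqq,\qqq+h\uu]$, $h<c$, inside $U(t)$, and one proves the quantitative margin $g_i(t,\qqq+h\uu)\ge h\delta$ for \emph{every} $i$. The only passage between times is then Assumption (\ref{dtg}), which holds on all of $\R^d$ and needs no domain control: $g_i(s,\qqq+h\uu)\ge h\delta-\beta|t-s|\ge0$ as soon as $h\delta\ge\beta|t-s|$. This also makes (\ref{dtgradg}) unnecessary. Your argument could be repaired --- for instance by a continuation argument in $\sigma$ showing $d_{Q_i(\sigma)}(\qqq)\le C|\sigma-t|$, hence $\qqq\in U_i(\sigma)$, as $\sigma$ moves from $t$ to $s$ --- but that is a real additional piece of work, not the routine check your write-up treats it as; the cleaner fix is simply to establish the margin at time $t$ as the paper does.
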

\begin{proof}
Consider $t,s \in [0,T]$ and $\qqq\in Q(t)$, let us construct a point closed to $\qqq$ belonging to $Q(s)$.
Let $\uu $ given by Lemma \ref{gooddir}, we introduce 
 $\zz(h):=\qqq+h\uu$ with $h >0$. We claim that for $h< h_l:=\min(c,\rho/(\beta +\delta)) $ , 
$$
\dsp  \forall i\in \I, \qquad g_i(t,\zz(h))\geq h \delta$$ 
(where $c $ and $\delta $ are introduced in (\ref{Ui}) and (\ref{gooddir})).
Indeed for  $h< c$, due to the convexity of $ g_i(t,\cdot)$, it comes
$$
g_i(t,\qqq+h\uu) \geq g_i(t,\qqq) + h \ll \nabla_\qqq \, g_i(t,\qqq), \ \uu \rr.
$$
 As a consequence, for $i\in I_\rho(t,\qqq)$,
$$ \dsp g_i(t,\qqq+h\uu) \geq h\delta ,$$
by (\ref{eq:gooddir}).
Moreover for every $i \notin  I_\rho(t,\qqq) $, according to Assumption (\ref{gradg}), $$g_i(t,\qqq+h\uu) \geq \rho -h\beta \geq h\delta $$  if $\dsp h<\frac{\rho}{\beta + \delta} $.
Thus for $h<h_l$, we have $g_i(t,\qqq+h\uu)\geq h \delta$ for all $i \in \I$. 
That is why we deduce from Assumption (\ref{dtg}) that $\zz(h)\in Q(s)$ if $ h \delta \geq \beta |t-s|$. Setting $\ell:=\dsp  \frac{\beta  h_l}{\delta} $, if $|t-s|<\ell $, it can be written that
$$ d_{Q(s)}(\qqq) \leq \inf_{h \delta  \geq  \beta|t-s|} |\qqq -\zz(h)| \leq \frac{ \beta}{\delta} |t-s|.$$
Consequently we obtain if $|t-s|<\ell $,
$$ d_H(Q(t),Q(s)) = \max\left(\sup_{\qqq\in Q(t)} d_{Q(s)}(\qqq), \sup_{\qqq\in Q(s)} d_{Q(t)}(\qqq) \right)\leq  \frac{\beta}{\delta} |t-s|.$$
This inequality is actually satisfied for any $t, s \in [0,T]$. To check it, it suffices to divide the corresponding interval into subintervals 
of length $\ell$ and to apply the triangle inequality. So the required result is obtained with $K_L=\beta/\delta $.

\end{proof}

\subsection{Well-posedness results}

\noi We now come to the main result. 

\begin{thrm}
\label{theo:wp}
Let $T>0 $ and $ \ff :[0,T] \times \R^d \rightarrow \R^d $ be a measurable map satisfying:
\begin{align}
& \forall \qqq \in \bigcup_{s \in [0,T]} Q(s) \virg  \ff( \cdot,\qqq) \textmd{ is Riemann-integrable on } [0,T]  \label{riemann} \vsp \\
 & \exists K >0 \virg \forall \qqq \in \bigcup_{s \in [0,T]} Q(s) \virg \forall t \in [0,T] \virg  | \ff(t,\qqq) -\ff(t,\tilde{\qqq})| \leq K |\qqq - \tilde{\qqq}|
\label{lip} \vsp \\
& \exists L >0 \virg \forall \qqq \in \bigcup_{s \in [0,T]} Q(s) \virg \forall t \in [0,T] \virg | \ff(t,\qqq)|\leq L(1+|\qqq|).
\label{lingro}
\end{align}
Then, under Assumption (\ref{inegtrianginverserho}) for all $\qqq_0 \in Q(0) $, the
following problem 
\begin{equation}
 \left \{ 
\begin{array}{l}
\displaystyle\frac {d \qqq}{dt}(t) + \NN(Q(t),\qqq(t)) \ni \ff(t,\qqq(t)) \ \textmd{for a.e. } t \in [0,T]  \vsp
\\
\qqq(0)=\qqq_0,
\end{array}
\right.
\label{incldiff2}
\end{equation}
has one and only one absolutely continuous solution $\qqq $ satisfying $ \qqq(t) \in Q(t)$ for every $t \in [0,T] $.
\end{thrm}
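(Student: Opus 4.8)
The plan is to establish existence by the catching-up algorithm and uniqueness by a Gr\"onwall estimate exploiting the uniform prox-regularity obtained in Proposition~\ref{Qprox}. The two structural ingredients needed are now in hand: uniform $\eta$-prox-regularity of the values $Q(t)$ with $\eta = \alpha/(M\gamma)$ (Proposition~\ref{Qprox}) and Lipschitz dependence of $Q$ on $t$ (Proposition~\ref{Qlip}). Together with the Lipschitz and linear-growth hypotheses (\ref{lip})--(\ref{lingro}) on $\ff$, the statement falls squarely within the well-posedness theory for perturbed sweeping processes by uniformly prox-regular sets developed in \cite{Thibnonconv,Thibsweep,Thibrelax,Thibbv}, and one legitimate route is simply to check that the hypotheses of those results hold. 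I would nevertheless give a direct, self-contained argument.

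For existence, fix $n$ and the uniform subdivision $t_k = kT/n$ with step $h = T/n$. Set $\qqq_0^n = \qqq_0$ and define recursively
\begin{equation*}
\qqq_{k+1}^n = \P_{Q(t_{k+1})}\!\left(\qqq_k^n + h\,\ff(t_k,\qqq_k^n)\right).
\end{equation*}
The first point is that this is well-defined. Since $\qqq_k^n \in Q(t_k)$, Proposition~\ref{Qlip} gives $d_{Q(t_{k+1})}(\qqq_k^n) \leq K_L h$, and the perturbation adds a displacement of size $O(h)$ by (\ref{lingro}); hence for $n$ large the argument of the projection lies within distance $\eta$ of $Q(t_{k+1})$, so by Proposition~\ref{Qprox} the projection is single-valued. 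From the projection one reads off the discrete inclusion $\qqq_{k+1}^n - \qqq_k^n - h\,\ff(t_k,\qqq_k^n) \in -\NN(Q(t_{k+1}),\qqq_{k+1}^n)$, and the displacement bound $|\qqq_{k+1}^n - \qqq_k^n| \leq C h$ (obtained from the triangle inequality for the projection, $K_L$, and an a priori bound on $|\qqq_k^n|$ coming from the linear growth). This yields a uniform bound on the discrete velocities, so the piecewise-affine interpolants $\qqq^n$ are uniformly bounded and uniformly Lipschitz.

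By the Arzel\`a--Ascoli theorem a subsequence of $(\qqq^n)$ converges uniformly to an absolutely continuous limit $\qqq$, and the associated velocities converge weakly in $L^2(0,T;\R^d)$. Passing to the limit requires three verifications: that $\qqq(t) \in Q(t)$ for every $t$ (from uniform convergence, closedness of $Q(t)$, and the Lipschitz-in-time control of Proposition~\ref{Qlip}); that the piecewise-constant samples of $\ff$ converge to $\ff(\cdot,\qqq(\cdot))$ in the right sense (using (\ref{lip}), the Riemann-integrability (\ref{riemann}) to control the time-averages, and uniform convergence); and --- the crux --- that the weak limit of the discrete normal directions lies in $\NN(Q(t),\qqq(t))$ for a.e.\ $t$. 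This last step is the main obstacle, since the normal cone is not closed under weak convergence. The device is to rewrite each discrete inclusion, via the hypomonotonicity inequality $\langle \vv, \yy - \xx\rangle \leq \frac{|\vv|}{2\eta}|\yy-\xx|^2$ of Definition~\ref{caracpr}, as an integrated inequality of the form $\int_0^T \langle -\dot\qqq^n + \ff^n, \zz - \qqq^n\rangle \leq \int_0^T \frac{|\cdots|}{2\eta}|\zz - \qqq^n|^2 + o(1)$; this form is stable under weak$\,\times\,$strong convergence, and a localization argument then recovers the pointwise inclusion $-\dot\qqq(t) + \ff(t,\qqq(t)) \in \NN(Q(t),\qqq(t))$ through the prox-regular characterization of the normal cone furnished by Definition~\ref{caracpr}.

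Finally, for uniqueness, let $\qqq_1,\qqq_2$ be two solutions, with $\dot\qqq_i = \ff(t,\qqq_i) - \vv_i$ and $\vv_i \in \NN(Q(t),\qqq_i)$, the norms $|\vv_i|$ being controlled a.e.\ by the velocity estimate and the growth of $\ff$. Differentiating $t \mapsto |\qqq_1(t) - \qqq_2(t)|^2$, bounding the perturbation term by (\ref{lip}), and using the two prox-regularity inequalities
\begin{equation*}
\langle \vv_1, \qqq_2 - \qqq_1\rangle \leq \frac{|\vv_1|}{2\eta}|\qqq_1-\qqq_2|^2, \qquad \langle \vv_2, \qqq_1 - \qqq_2\rangle \leq \frac{|\vv_2|}{2\eta}|\qqq_1-\qqq_2|^2,
\end{equation*}
one obtains $\frac{d}{dt}|\qqq_1-\qqq_2|^2 \leq C(t)\,|\qqq_1-\qqq_2|^2$ with $C \in L^1(0,T)$. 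Since $\qqq_1(0) = \qqq_2(0) = \qqq_0$, Gr\"onwall's lemma forces $\qqq_1 \equiv \qqq_2$.
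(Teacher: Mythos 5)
Your proposal is correct, but it does far more work than the paper does. The paper's entire proof is the observation you make in your first paragraph: Propositions~\ref{Qlip} and~\ref{Qprox} show that the moving set $Q(\cdot)$ is Lipschitz in time with uniformly $\eta$-prox-regular values, so Theorem 1 of~\cite{Thibrelax} applies verbatim --- and that is all that is said. Your self-contained argument --- catching-up projections onto the true sets $Q(t_{k+1})$ (well-defined for small $h$ by prox-regularity together with the bound $d_{Q(t_{k+1})}(\qqq_k^n)\le K_L h$), Arzel\`a--Ascoli, passage to the limit via an integrated hypomonotonicity inequality, and Gr\"onwall uniqueness from Definition~\ref{caracpr} --- is essentially a reconstruction of the proof of that cited theorem, not of anything in this paper. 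The strategy is sound and each step can be completed: the crux, stability of the normal-cone inclusion under weak convergence of the velocities, works exactly as you indicate, by dominating the quadratic term with the uniform velocity bound and localizing at Lebesgue points with test selections $\zz(s)\in Q(s)$, whose construction again uses Proposition~\ref{Qlip}; and your uniqueness estimate is legitimate since $|\vv_i|\le |\dot\qqq_i|+L(1+\|\qqq_i\|_\infty)\in L^1$ makes the Gr\"onwall coefficient integrable. What the citation buys the paper is brevity and the freedom to reserve the hard limit-passage machinery for where it is genuinely new, namely Theorem~\ref{theo:qhq}, where the projection is onto the convexified sets $\Qc(t,\qqq)$ rather than onto $Q(t)$ and the results of~\cite{Thibrelax} do not apply directly. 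What your route buys is self-containedness, at the price that the limit passage you compress into two sentences (admissible test selections, the weak$\times$strong convergence argument, the localization) is precisely the technical content of the reference and would need to be written out in full to constitute a complete proof.
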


\begin{proof}
As the set-valued map $Q $ varies in a Lipschiz way by Proposition~\ref{Qlip} and for all $t \in [0,T] $, $Q(t)$ is $\eta$-prox-regular by Proposition~\ref{Qprox}, we can apply Theorem 1 of~\cite{Thibrelax}.  
\end{proof}

\noi We now consider a constant set-valued map $Q$ (the constraints $g_i $ are supposed to be time-independent).
In this particular case, we only require Assumption (\ref{inegtrianginverse}) instead of Assumption (\ref{inegtrianginverserho}).

\begin{thrm}
\label{theo:wp2}
Under Assumptions (\ref{Ui}), (\ref{gradg}), (\ref{hessg}), (\ref{inegtrianginverse}) with time-independent set $Q$, 
let $T>0 $ and $ \ff :[0,T] \times Q \rightarrow \R^d $ be a measurable map satisfying:
\begin{align}
& \forall \qqq \in Q \virg  \ff( \cdot,\qqq) \textmd{ is Riemann-integrable on } [0,T]  \label{riemannbis} \vsp \\
 & \exists K >0 \virg \forall \qqq \in Q \virg \forall t \in [0,T] \virg  | \ff(t,\qqq) -\ff(t,\tilde{\qqq})| \leq K |\qqq - \tilde{\qqq}|
\label{lipbis} \vsp \\
& \exists L >0 \virg \forall \qqq \in Q \virg \forall t \in [0,T] \virg | \ff(t,\qqq)|\leq L(1+|\qqq|).
\label{lingrobis}
\end{align}
Then, for all $\qqq_0 \in Q $, the
following problem 
\begin{equation}
 \left \{ 
\begin{array}{l}
\displaystyle\frac {d \qqq}{dt}(t) + \NN(Q,\qqq(t)) \ni \ff(t,\qqq(t)) \ \textmd{for a.e. } t \in [0,T]  \vsp
\\
\qqq(0)=\qqq_0,
\end{array}
\right.
\label{incldiff2bis}
\end{equation}
has one and only one absolutely continuous solution $\qqq $ taking values in $Q$.  
Moreover the map $\qqq $ is also solution of the following differential equation:
\begin{equation}
 \left \{ 
\begin{array}{l}
\displaystyle\frac {d \qqq}{dt}(t) +\P_{\NN(Q,\qqq(t))}(\ff(t,\qqq(t)))= \ff(t,\qqq(t)) \ \textmd{for a.e. } t \in [0,T]  \vsp
\\
\qqq(0)=\qqq_0,
\end{array}
\right.
\label{equadiff2}
\end{equation}
\end{thrm}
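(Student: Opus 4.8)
The statement has two independent parts. For existence and uniqueness I would argue exactly as in the proof of Theorem~\ref{theo:wp}, invoking Theorem~1 of~\cite{Thibrelax}, but exploiting that $Q$ is now constant. A constant set-valued map trivially satisfies $d_H(Q(t),Q(s))=0$, so Proposition~\ref{Qlip} is superfluous; in particular neither Lemma~\ref{gooddir} nor the stronger inverse triangle inequality~(\ref{inegtrianginverserho}) is required. The only remaining structural ingredient is uniform prox-regularity, and the proof of Proposition~\ref{Qprox} uses nothing beyond~(\ref{inegtrianginverse}). Hence all hypotheses of~\cite{Thibrelax} hold under~(\ref{Ui}), (\ref{gradg}), (\ref{hessg}) and~(\ref{inegtrianginverse}), which gives the unique absolutely continuous solution $\qqq$ with values in $Q$. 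This is exactly why~(\ref{inegtrianginverserho}) may be weakened to~(\ref{inegtrianginverse}) once $Q$ no longer depends on time.

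For the reformulation~(\ref{equadiff2}), I would fix the solution $\qqq$ and reason at a.e.\ $t$, where $\dot\qqq(t)$ exists. Put $\nn(t):=\ff(t,\qqq(t))-\dot\qqq(t)$; the inclusion gives $\nn(t)\in\NN(Q,\qqq(t))$, which by Proposition~\ref{coneprox} is the closed convex cone $-\sum_{i\in I(t,\qqq(t))}\R^+\nabla_\qqq\,g_i(t,\qqq(t))$. The aim is to identify $\nn(t)$ with $\P_{\NN(Q,\qqq(t))}(\ff(t,\qqq(t)))$, and the tool is the prox-regularity inequality of Definition~\ref{caracpr} applied to the one-sided difference quotients of $\qqq$. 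Since $\qqq(t\pm h)\in Q$, applying it at $\xx=\qqq(t)$ with $\yy=\qqq(t+h)$ and $\vv=-\nabla_\qqq\,g_i(t,\qqq(t))\in\NN(Q,\qqq(t))$ for each active $i$, dividing by $h>0$ and letting $h\to 0^+$ (the remainder $|\qqq(t+h)-\qqq(t)|^2/h$ tends to $0$), yields $\langle\nabla_\qqq\,g_i(t,\qqq(t)),\dot\qqq(t)\rangle\geq 0$; taking conic combinations this says $\dot\qqq(t)\in\NN(Q,\qqq(t))^\circ$.

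Running the same computation with the backward quotient $\yy=\qqq(t-h)$ and $\vv=\nn(t)$ gives $\langle\nn(t),\dot\qqq(t)\rangle\geq 0$, while $\dot\qqq(t)\in\NN(Q,\qqq(t))^\circ$ and $\nn(t)\in\NN(Q,\qqq(t))$ force $\langle\nn(t),\dot\qqq(t)\rangle\leq 0$; hence $\langle\nn(t),\dot\qqq(t)\rangle=0$. Since $\NN(Q,\qqq(t))$ is a closed convex cone, Moreau's decomposition into mutually polar cones~(\cite{Moreaucones}) applies: from $\ff(t,\qqq(t))=\nn(t)+\dot\qqq(t)$ with $\nn(t)$ in the cone, $\dot\qqq(t)$ in its polar and the two orthogonal, uniqueness of the decomposition yields $\nn(t)=\P_{\NN(Q,\qqq(t))}(\ff(t,\qqq(t)))$. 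Substituting into $\dot\qqq(t)+\nn(t)=\ff(t,\qqq(t))$ gives~(\ref{equadiff2}) for a.e.\ $t$.

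I expect the main obstacle to be the orthogonality step, which relies on the two-sided tangency of $\dot\qqq(t)$: one must use both the forward and backward difference quotients, verify that the prox-regularity inequality is legitimately applied at $\qqq(t)$ (the case $\qqq(t)\in\mathrm{int}(Q)$ being trivial since then $\nn(t)=0$), and confirm that the second-order remainder is genuinely $o(h)$ after division by $h$, which follows from $|\qqq(t+h)-\qqq(t)|=O(h)$ by absolute continuity of $\qqq$.
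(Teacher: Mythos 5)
Your proposal is correct, and it splits naturally against the paper as follows. For existence and uniqueness you do exactly what the paper does (implicitly): a constant set-valued map is trivially Lipschitz, so Proposition~\ref{Qlip} --- and with it Assumption~(\ref{inegtrianginverserho}) and Lemma~\ref{gooddir} --- is not needed, while Proposition~\ref{Qprox} gives $\eta$-prox-regularity using only~(\ref{inegtrianginverse}); Theorem~1 of~\cite{Thibrelax} then applies. For the reformulation~(\ref{equadiff2}), however, you genuinely diverge: the paper offers no proof of this point and simply refers to Proposition~3.3 of~\cite{Fred}, whereas you prove it directly. Your argument is sound: at a.e.\ $t$ the cone $\NN(Q,\qqq(t))$ is finitely generated by Proposition~\ref{coneprox}, hence closed and convex; the prox-regularity inequality of Definition~\ref{caracpr} applied to forward difference quotients (with the generators $-\nabla_\qqq\,g_i$ as $\vv$) shows $\dot{\qqq}(t)\in\NN(Q,\qqq(t))^\circ$, the backward quotient with $\vv=\nn(t):=\ff(t,\qqq(t))-\dot{\qqq}(t)$ gives $\langle \nn(t),\dot{\qqq}(t)\rangle\geq 0$, polarity gives the reverse inequality, and Moreau's two-cone decomposition~\cite{Moreaucones} then identifies $\nn(t)=\P_{\NN(Q,\qqq(t))}(\ff(t,\qqq(t)))$, which is~(\ref{equadiff2}). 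What each route buys: yours makes the theorem self-contained in the smooth, finitely-many-constraints setting of this paper; the citation to~\cite{Fred} is shorter and valid for general prox-regular sets, beyond those of the form~(\ref{def:Q}). One small correction: the bound $|\qqq(t\pm h)-\qqq(t)|=O(h)$ needed to kill the quadratic remainder follows from differentiability of $\qqq$ at the point $t$ (true a.e.), not from absolute continuity alone --- harmless, since your whole argument is carried out precisely at such points.
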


\noi For this last point we refer the reader to Proposition 3.3 in \cite{Fred}.

\begin{rmrk}
 The assumptions about the perturbation $\ff$ can be weakened (see~\cite{Thibrelax}). However they are required in the numerical analysis of the scheme proposed in the following section. More precisely, we need Assumption (\ref{riemann}) to prove Lemma~\ref{convfaiblefn}. 
\end{rmrk}

\section{A numerical scheme}
\label{sec:schema}
\subsection{Presentation}
We present in this section a numerical scheme to approximate the solution
 of~(\ref{incldiff2}) on the time interval $[0, T] $. This scheme is adapted from the one proposed by B. Maury for granular media in~\cite{Maurygrain}. Let $ n\in \mathbb{N}^\star$, $h = T/n $ be the time
 step and $t_k^n=k h $ be
 the computational times. 
 We denote by 
$\qqq_k^n $ the approximation of  $\qqq(t_k^n )$ with $\qqq_0^n=\qqq_0 $. The next configuration is computed as follows:
\begin{equation}
 \qqq_{k+1}^n : = \P_{\Qc( t_{k+1}^n ,\qqq_k^n)} (\qqq_k^n  + h \ff(t_k^n, \qqq_k^n ))
\label{schema}
\end{equation} 
with
$$
\Qc(t,\qqq)  := \{ \tilde \qqq \in \R^d \virg g_{i}(t,\qqq) + \langle \nabla_\qqq \, g_{i}(t,\qqq), 
\tilde \qqq - \qqq \rangle \geq 0 \quad \forall \, i 
\} \textmd{ for } \qqq \in U(t):=\bigcap U_i(t).
$$
We recall that all the gradients $\nabla_\qqq \, g_{i}(t,\qqq)$ are well-defined provided that $\qqq\in U(t)$.
The set $\Qc(t,\qqq)  $ can be seen as an inner convex approximation of $Q(t)$ with respect
to $\qqq$. 
This scheme is a prediction-correction algorithm: predicted position vector $\qqq_k^n  + h \ff(t_k^n,\qqq_k^n )$, that may not be admissible, is projected onto the approximate set of feasible configurations.

\noi Let us check that this scheme is well-defined for $h < \frac{c}{K_L} $ with $c$ and $K_L $ respectively given by Assumption~(\ref{Ui}) and Proposition~(\ref{Qlip}):
\begin{prpstn} \label{schemewd}
Assume that $ h K_L<c $, then for all $k < n-1$
\begin{equation*}
 \qqq_{k+1}^n \in \Qc(t_{k+1}^n, \qqq_k^n ) \subset Q(t_{k+1}^n) \subset U(t_{k+2}^n) .
\end{equation*}
Thus every computed configuration is feasible .
\end{prpstn}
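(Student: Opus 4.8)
The plan is to argue by induction on $k$, carrying along the hypothesis that $\qqq_k^n$ is well-defined and feasible, i.e. $\qqq_k^n \in Q(t_k^n)$; the base case holds since $\qqq_0^n = \qqq_0 \in Q(0) = Q(t_0^n)$. Before running the induction I would isolate the general fact where the standing hypothesis is used: \emph{if $K_L|t-s| < c$, then $Q(t) \subset U(s)$}. Indeed, for $\qqq \in Q(t)$, Proposition~\ref{Qlip} gives $d_{Q(s)}(\qqq) \leq d_H(Q(t),Q(s)) \leq K_L|t-s| < c$, so there is $\qqq' \in Q(s) \subset Q_i(s)$ with $|\qqq-\qqq'| < c$; since Assumption~(\ref{Ui}) ensures that any point at distance less than $c$ from $Q_i(s)$ still lies in the open set $U_i(s)$, we get $\qqq \in \bigcap_i U_i(s) = U(s)$. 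Applied to the two consecutive gaps $t_{k+1}^n - t_k^n = t_{k+2}^n - t_{k+1}^n = h$ (with $hK_L < c$), this simultaneously yields the third inclusion $Q(t_{k+1}^n) \subset U(t_{k+2}^n)$ and the fact that $\qqq_k^n \in Q(t_k^n) \subset U(t_{k+1}^n)$, so all gradients $\nabla_\qqq g_i(t_{k+1}^n,\qqq_k^n)$ exist and $\Qc(t_{k+1}^n,\qqq_k^n)$ is well-defined.

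Next I would dispatch the middle inclusion, which is the genuinely convex-analytic step. The set $\Qc(t_{k+1}^n,\qqq_k^n)$ is a finite intersection of closed half-spaces, hence closed and convex. For any $\tilde\qqq$ in it and any $i$, the defining inequality together with the convexity of $g_i(t_{k+1}^n,\cdot)$ and the first-order characterization of convexity gives $g_i(t_{k+1}^n,\tilde\qqq) \geq g_i(t_{k+1}^n,\qqq_k^n) + \langle \nabla_\qqq g_i(t_{k+1}^n,\qqq_k^n), \tilde\qqq-\qqq_k^n\rangle \geq 0$, so $\tilde\qqq \in Q_i(t_{k+1}^n)$; intersecting over $i$ gives $\Qc(t_{k+1}^n,\qqq_k^n) \subset Q(t_{k+1}^n)$. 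This is exactly the inner convex approximation property and uses nothing beyond convexity of the constraints.

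The first inclusion is then a matter of well-definedness of the projection: once $\Qc(t_{k+1}^n,\qqq_k^n)$ is a \emph{nonempty} closed convex set, the point $\qqq_{k+1}^n := \P_{\Qc(t_{k+1}^n,\qqq_k^n)}(\qqq_k^n + h\ff(t_k^n,\qqq_k^n))$ exists, is unique, and lies in the set, giving $\qqq_{k+1}^n \in \Qc(t_{k+1}^n,\qqq_k^n)$ and hence, by the previous step, $\qqq_{k+1}^n \in Q(t_{k+1}^n)$, which closes the induction.

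The hard part will be the nonemptiness of $\Qc(t_{k+1}^n,\qqq_k^n)$: because $\qqq_k^n$ only lies in $Q(t_k^n)$ and not necessarily in $Q(t_{k+1}^n)$, it is itself not admissible for the linearized constraints, and the convexity inequality points the wrong way, so a feasible point must be exhibited explicitly. I would produce one along the good direction $\uu$ of Lemma~\ref{gooddir} at $(t_k^n,\qqq_k^n)$: moving $\tilde\qqq = \qqq_k^n + s\uu$ increases each linearized constraint since $\langle \nabla_\qqq g_i,\uu\rangle$ is bounded below on the nearly active indices, while the initial deficit is controlled by $g_i(t_{k+1}^n,\qqq_k^n) \geq g_i(t_k^n,\qqq_k^n) - \beta h \geq -\beta h$ via Assumption~(\ref{dtg}), and the time shift in the gradients is absorbed through Assumption~(\ref{dtgradg}); choosing $s$ in the resulting admissible range gives a point of $\Qc(t_{k+1}^n,\qqq_k^n)$ for $h$ small. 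The other two inclusions are comparatively routine: the middle one is pure convexity, and the outer one is the direct consequence of Proposition~\ref{Qlip} and Assumption~(\ref{Ui}) isolated above.
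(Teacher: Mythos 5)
Your proof is correct and follows essentially the same route as the paper's: induction on $k$, with Proposition~\ref{Qlip} combined with Assumption~(\ref{Ui}) giving $Q(t)\subset U(s)$ whenever $K_L|t-s|<c$ (hence the gradients exist, $\Qc(t_{k+1}^n,\qqq_k^n)$ is well-defined, and the outer inclusion holds), and the convexity of $g_i(t_{k+1}^n,\cdot)$ giving the inner inclusion $\Qc(t_{k+1}^n,\qqq_k^n)\subset Q(t_{k+1}^n)$. The one point where you go beyond the paper's own proof is the nonemptiness of $\Qc(t_{k+1}^n,\qqq_k^n)$ (needed for the projection to exist, since $\qqq_k^n$ itself need not satisfy the linearized constraints at time $t_{k+1}^n$): the paper silently defers this to Lemma~\ref{distQc}, proved later via exactly the construction you sketch --- moving from $\qqq_k^n$ along the good direction $\uu$ of Lemma~\ref{gooddir}, with the index range $s\in[2h\beta/\delta,\,\rho/(2\beta)]$ handling both the nearly active constraints and those with slack $\rho$ --- so your version is simply the more self-contained presentation of the same argument.
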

\begin{proof}
 Since $\qqq_0^n=\qqq_0 \in Q(0) $ and $d_H(Q(0), Q(t_1^n)) \leq K_L h $, we get $$ \qqq_0^n \in Q(t_1^n) + h K_L \overline{B(0,1)} \subset U(t_1^n), $$ according to (\ref{Ui}) and Proposition~\ref{Qlip}.
So $\Qc(t_{1}^n, \qqq_{0}^n )$ is well-defined and is included in $Q(t_{1}^n) $ due to the convexity of the functions $g_i(t_{1}^n, \cdot) $.
Then as $\qqq_{1}^n \in \Qc(t_{1}^n, \qqq_{0}^n )$, $\qqq_{1}^n \in Q(t_{1}^n) \subset Q(t_2^n) + h K_L \overline{B(0,1)} \subset U(t_2^n)$. 
By iterating, we end the proof.
\end{proof}

\noindent By Lemma~\ref{fondamental} (a), it can be checked that
\begin{equation}
\label{eq:diffincdisc}
\frac {\qqq_{k+1}^n - \qqq_k^n }{h} + \NN(\Qc(t_{k+1}^n, \qqq_k^n ),\qqq_{k+1}^n ) \ni
\ff(t_k^n,\qqq_k^n ),
\end{equation}
so that the scheme can also be seen as a 
semi-implicit discretization  of~(\ref{incldiff2}),
 where the cone $ \NN(\Qc(t_{k+1}^n, \qqq_k^n ),\qqq_{k+1}^n ) $ stands for an approximation of $ \NN(Q(t),\qqq(t) )$.
 \begin{center}
\begin{figure}
\centering
\begin{tabular}{c}
\psfrag{a}[l]{$\qqq_k^n$}
\psfrag{b}[l]{$\qqq_k^n +h \ff(t_k^n, \qqq_k^n) $}
\psfrag{c}[l]{$\qqq_k^n +h \ff(t_k^n, \qqq_k^n)$}
\psfrag{d}[l]{$\qqq_{k+1}^n$}
\psfrag{e}[l]{$\qqq_{k+1}^n$}
\psfrag{f}[l]{$\hspace{-5.5pt}\tilde{\qqq}_{k+1}^n$}
\psfrag{g}[l]{$ \tilde{\qqq}_{k+1}^n$}
\psfrag{q}[l]{$ \LARGE{Q(t_{k+1}^n)} $}
\psfrag{t}[l]{\hspace{-1cm}$ \Large{\Qc(t_{k+1}^n,\qqq_k^n)}$}
\includegraphics[scale=0.48]{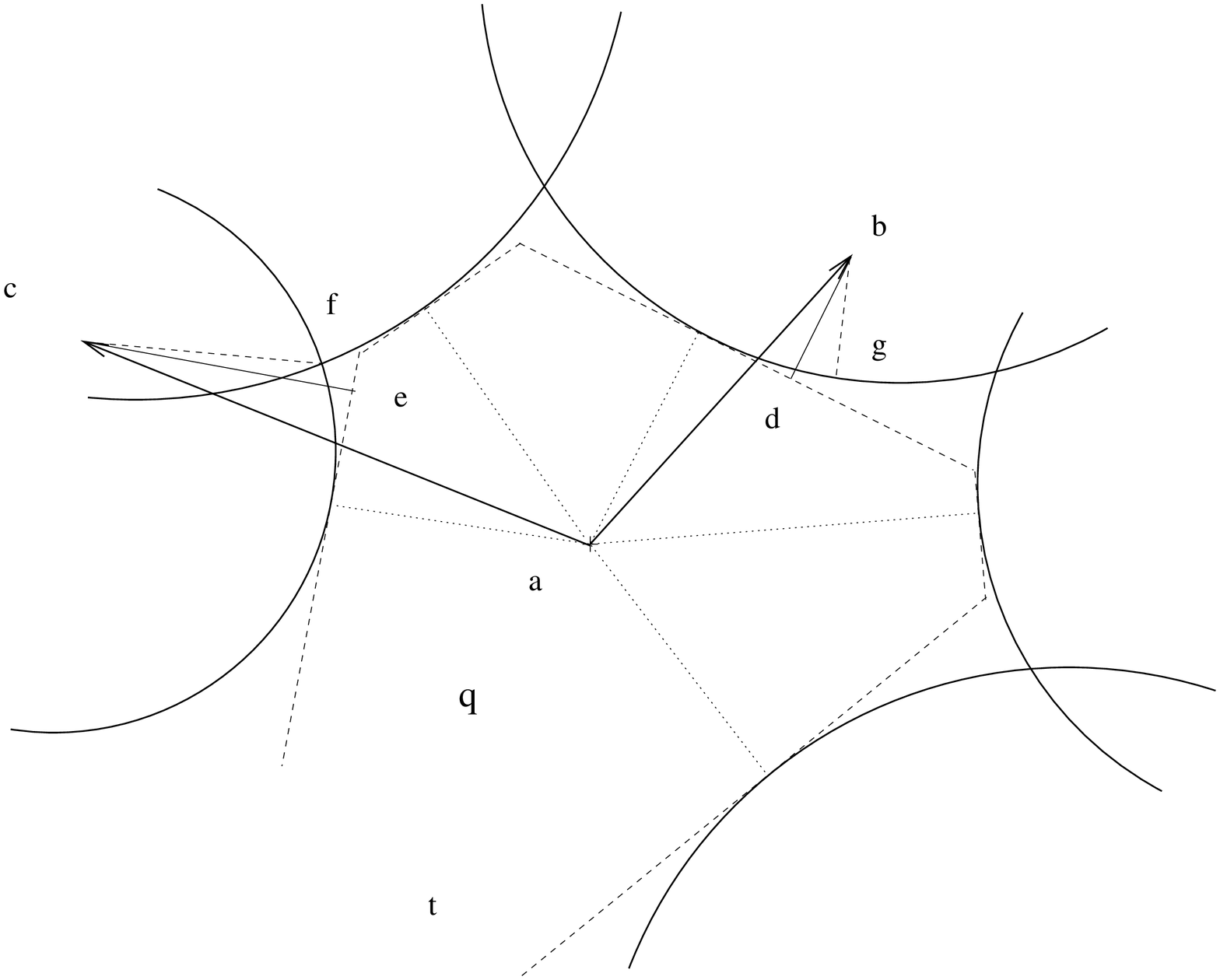} 
\end{tabular}
\caption{Theoretical and numerical projections.}
\label{fig:projs}
\end{figure}
\end{center} 
 \noindent In Figure~\ref{fig:projs},  we illustrate the set $Q(t_{k+1}^n) \subset \R^d$, intersection of sets $Q_i(t_{k+1}^n)$ whose boundaries are plotted in solid line. The set $\Qc(t_{k+1}^n,\qqq_k^n)$ is delimited by the dashed line. The theoretical and numerical projections, respectively $\tilde{\qqq}_{k+1}^n := \P_{Q(t_{k+1}^n)} (\qqq_k^n + h \ff(t_k^n,\qqq_k^n))$ and $\qqq_{k+1}^n $ are represented (for two examples of $\ff(t_k^n,\qqq_k^n)$). Indeed, as $Q(t_{k+1}^n) $ is uniformly prox-regular, the projection onto $Q(t_{k+1}^n)$ of $\qqq_k^n + h \ff(t_k^n,\qqq_k^n) $ is well-defined for $h$ small enough. 
 The replacement of $Q(t_{k+1}^n)$ by the convex set $\Qc(t_{k+1}^n,\qqq_k^n)$ is convenient because it allows us to use classical numerical methods to compute this projection. However, it raises some difficulties to prove the convergence of the scheme. On the one hand, we have to check that this approximation is sufficiently accurate (see Lemma~\ref{cones_normaux}). On the other hand, the set $\Qc(t,\qqq)$ does not vary smoothly enough to directly apply results about sweeping process (this point will be specified in Remark~\ref{localprop}).
\begin{lmm}
For all $t \in [0,T] $ and $\qqq \in Q(t) $, $$\NN(Q(t),\qqq )= \NN(\Qc(t,\qqq),\qqq ) .$$
\label{cones_normaux}
\end{lmm}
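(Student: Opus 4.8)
Since $\NN(Q(t),\qqq)$ has already been identified in Proposition~\ref{coneprox} as the cone $-\sum_{i\in I(t,\qqq)}\R^+\nabla_\qqq\,g_i(t,\qqq)$, the whole task reduces to showing that $\NN(\Qc(t,\qqq),\qqq)$ equals this same cone. The plan is therefore to exhibit $\Qc(t,\qqq)$ as a convex set fitting the framework of this section and to read off its proximal normal cone at $\qqq$.

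First I would record the elementary structural facts. For each $i$ set $\hat g_i(\tilde\qqq):=g_i(t,\qqq)+\ll\nabla_\qqq\,g_i(t,\qqq),\tilde\qqq-\qqq\rr$, so that $\Qc(t,\qqq)=\bigcap_i\{\hat g_i\geq 0\}$ is convex, being an intersection of half-spaces. Evaluating at $\tilde\qqq=\qqq$ gives $\hat g_i(\qqq)=g_i(t,\qqq)\geq 0$, whence $\qqq\in\Qc(t,\qqq)$, and the $i$-th constraint is active at $\qqq$ exactly when $g_i(t,\qqq)=0$, i.e. when $i\in I(t,\qqq)$; its constant gradient is $\nabla_\qqq\,g_i(t,\qqq)$.

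For the inclusion $\NN(Q(t),\qqq)\subseteq\NN(\Qc(t,\qqq),\qqq)$ it suffices, by Proposition~\ref{coneprox}, to check that each generator $-\nabla_\qqq\,g_i(t,\qqq)$ with $i\in I(t,\qqq)$ is normal to $\Qc(t,\qqq)$ at $\qqq$. Since $\Qc(t,\qqq)$ is convex I would invoke Lemma~\ref{fondamental}(b): for every $\tilde\qqq\in\Qc(t,\qqq)$ and every $i\in I(t,\qqq)$ one has $g_i(t,\qqq)=0$, so the inequality $\hat g_i(\tilde\qqq)\geq 0$ reads $\ll\nabla_\qqq\,g_i(t,\qqq),\tilde\qqq-\qqq\rr\geq 0$, that is $\ll-\nabla_\qqq\,g_i(t,\qqq),\tilde\qqq-\qqq\rr\leq 0$; taking nonnegative combinations keeps us inside the convex cone $\NN(\Qc(t,\qqq),\qqq)$.

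The reverse inclusion is the crux, and it is precisely the point where a constraint-qualification argument is needed to express the normal cone of a set defined by inequalities as the conical hull of its active gradients. I expect this to be the main obstacle, but it is dispatched here by the polyhedral nature of $\Qc(t,\qqq)$: the $\hat g_i$ are affine, with gradients bounded by $\beta$ through (\ref{gradg}), so $\Qc(t,\qqq)$ falls under the hypotheses of Proposition~\ref{coneprox}, where only convexity of the constraints and boundedness of their gradients are used. Applying that proposition to $\Qc(t,\qqq)$ yields $\NN(\Qc(t,\qqq),\qqq)=-\sum_i\R^+\nabla_{\tilde\qqq}\hat g_i$, the sum running over the active indices, which we identified above as $I(t,\qqq)$, with $\nabla_{\tilde\qqq}\hat g_i=\nabla_\qqq\,g_i(t,\qqq)$. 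This is exactly the cone furnished by Proposition~\ref{coneprox} for $Q(t)$, and the equality of the two normal cones follows. Alternatively, since $\Qc(t,\qqq)$ is polyhedral this last step can be obtained directly from Farkas' lemma, the Abadie qualification being automatic for polyhedra.
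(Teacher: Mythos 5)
Your proof is correct, and the forward inclusion $\NN(Q(t),\qqq)\subset\NN(\Qc(t,\qqq),\qqq)$ coincides with the paper's (Proposition~\ref{coneprox} plus Lemma~\ref{fondamental}~(b)). The reverse inclusion, however, is handled by a genuinely different route. The paper writes $\qqq=\P_{\Qc(t,\qqq)}(\qqq+\ww)$ via Lemma~\ref{fondamental}~(a) and invokes the saddle-point characterization of the projection onto $\Qc(t,\qqq)$ (Lemma~\ref{point-selle}): the Kuhn--Tucker system gives $\ww=-\sum\mu_i\nabla_\qqq\,g_i(t,\qqq)$ with $\mu_i\geq 0$, and the complementarity relation $\sum\mu_i\,g_i(t,\qqq)=0$ kills the multipliers of inactive constraints, placing $\ww$ in the cone of Proposition~\ref{coneprox}. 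You instead exploit the polyhedral structure of $\Qc(t,\qqq)$: its constraints are affine, active at $\qqq$ exactly for $i\in I(t,\qqq)$, so its normal cone is the conical hull of the active gradients, either by re-applying Proposition~\ref{coneprox} or by the classical Farkas argument for polyhedra. One caveat: Proposition~\ref{coneprox} is stated only for the sets $Q(t)$ built from the data $g_i$, so re-applying it to the affine functions $\hat g_i$ rests on the (true, and explicitly noted in the paper) fact that its proof uses only convexity and boundedness of gradients; your fallback via Farkas' lemma closes this cleanly, since for a polyhedron the identity $\NN(\Qc(t,\qqq),\qqq)=-\sum_{i\in I(t,\qqq)}\R^+\nabla_\qqq\,g_i(t,\qqq)$ requires no constraint qualification whatsoever. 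What each approach buys: yours is more self-contained and makes transparent that no hypothesis like (\ref{inegtrianginverse}) is needed for this lemma; the paper's detour through Lemma~\ref{point-selle} is not wasted machinery, since that saddle-point system is reused later (Proposition~\ref{lem:proj}, Lemma~\ref{nul}) and produces the multipliers explicitly in the form needed there.
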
 
\begin{proof}
 We need the well-known following result (see e.g.~\cite{Ciarlet}) which rests on the reformulation of the minimization problem under constraints (associated to the projection), in terms of a saddle-point problem.
\begin{lmm}
Let $t \in [0,T] $, $\qqq \in Q(t)$ and $\tilde{\qqq} \in \R^{d} $. Then,
$\tilde{\pp}=\textrm{P}_{\Qc(t,\qqq)} (\tilde{\qqq}) $ is equivalent to the existence of  $
\bflambda \in (\mathbb{R}^+)^{p}$ such that 
$(\tilde{\pp},\bflambda ) $ satisfies: 
\begin{equation}
\tag{$P_{t,q,\tilde{q}}$} \left\{
\begin{array}{l}
 \dsp \tilde{\pp}= \tilde{\qqq} + \sum \lambda_{i} \nabla_\qqq \, g_{i}(t,\qqq),
\vsp \\
\dsp \forall i \virg g_{i}(t,\qqq)+ \langle \nabla_\qqq \, g_{i}(t,\qqq) ,
\tilde{\pp}-\qqq \rangle \geq 0,
 \vsp \\
\dsp \sum \lambda_{i}\  ( g_{i}(t,\qqq)+ \langle \nabla_\qqq \, g_{i}(t,\qqq) ,
\tilde{\pp}-\qqq  \rangle )=0.
\end{array}
\right.
\label{pqqt}
\end{equation} 
\label{point-selle} 
\end{lmm}
\noindent First we check that $\NN(Q(t),\qqq)\subset
\NN(\Qc(t,\qqq),\qqq)$. \\
Let $\ww \in  \NN(Q(t),\qqq)$. According to Proposition~\ref{coneprox}, this vector can be written $$ \ww = -\sum_{i \in I(t,\qqq)} \mu_{i} \nabla_\qqq \, g_i(t,\qqq)$$ with nonnegative reals
$\mu_{i} $.
So by definition of $\Qc(t,\qqq) $, we get
for all $\pp \in \Qc(t,\qqq)$,  $ \langle\ww,\pp-\qqq  \rangle\leq 0 $. Applying Lemma~\ref{fondamental} (b), we deduce that $\ww \in \NN(\Qc(t,\qqq),\qqq) $. \\
It remains to prove $\NN(\Qc(t,\qqq),\qqq)\subset
\NN(Q(t),\qqq) $. \\
Letting $\ww \in  \NN(\Qc(t,\qqq),\qqq) $, we can write by Lemma~\ref{fondamental} (a), $\qqq=P_{\Qc(t,\qqq)}(\qqq+\ww) $ which implies by Lemma~\ref{point-selle}, $$\qqq =(\qqq +\ww) + \sum \mu_{i}\nabla_\qqq \, g_i(t,\qqq),
\textmd{ where } (\qqq, \bfmu) \in Q(t) \times (\R^+)^p \textmd{ satisfies System }
(P_{t,q,q+w}). $$ Consequently, $\ww =  -\sum \mu_{i} \nabla_\qqq \, g_i(t,\qqq) \virg \mu_{i} \geq 0$. The third relation in ($P_{t,q,q+w}$) is $\sum \mu_{i}g_i(t,\qqq)=0 $, so $\mu_{i} =0$ if $g_i(t,\qqq)>0$, which concludes the proof.
\end{proof}

\subsection{Convergence}
Before stating the result of convergence, we introduce some notations.
We define the piecewise constant function $ \ff^n $ as follows, $$ \ff^n(t)=\ff(t_k^n, \qqq_k^n)
\textmd{ if } t \in
[t_k^n,t_{k+1}^n [,\ k<n \textmd{ and } \dsp \ff^n(T)=\ff(t_{n-1}^n, \qqq_{n-1}^n).$$ We denote by $\qqq^n$ the continuous, piecewise linear function satisfying for $k \in\{ 0, \dots, n\}$, $\qqq^n(t_k^n)= \qqq_k^n .$
To finish, we introduce the functions $\rho $ and $\theta $ defined by $$\rho^n(t)=t_k^n \textmd{ and }\theta^n(t)=t_{k+1}^n
\textmd{ if } t \in [t_k^n, t_{k+1}^n[,\  \rho^n(T)=T \textmd{ and }\theta^n(T)=T. $$
\begin{thrm}
\label{theo:qhq}
 With the assumptions of Theorem~\ref{theo:wp}, $\qqq^n$ tends to $\qqq$
 in $C^0([0,T], \R^d)$, where $t\mapsto \qqq(t)$ is the unique solution
of~(\ref{incldiff2}).   
\end{thrm}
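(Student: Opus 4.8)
The plan is to establish convergence by the standard compactness-plus-identification strategy for discretized sweeping processes. First I would derive uniform a priori estimates on the discrete trajectories. Using the linear growth bound \eqref{lingro} on $\ff$ and the fact (Proposition~\ref{schemewd}) that every $\qqq_k^n$ stays in the feasible tube, I would show the sequences $(\qqq_k^n)$ are uniformly bounded, and then bound the discrete velocities $(\qqq_{k+1}^n-\qqq_k^n)/h$. The key mechanism here is that the correction step is a projection: from \eqref{eq:diffincdisc} the increment $\qqq_{k+1}^n-\qqq_k^n-h\ff(t_k^n,\qqq_k^n)$ lies in $-\NN(\Qc(t_{k+1}^n,\qqq_k^n),\qqq_{k+1}^n)$, and combining this normal-cone membership with the uniform prox-regularity (Proposition~\ref{Qprox}, transferred to $\Qc$ via Lemma~\ref{cones_normaux}) together with the Lipschitz motion of $Q$ (Proposition~\ref{Qlip}) yields a bound of the form $|\qqq_{k+1}^n-\qqq_k^n|\leq C h$. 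This makes the piecewise-linear interpolants $\qqq^n$ uniformly Lipschitz in time.

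Next I would invoke Ascoli-Arzel\`a: the family $(\qqq^n)$ is uniformly bounded and uniformly equicontinuous, hence admits a subsequence converging in $C^0([0,T],\R^d)$ to some limit $\qqq$, which is itself Lipschitz (so absolutely continuous). I would also control the auxiliary functions, showing that $\qqq^n\circ\rho^n$ and $\qqq^n\circ\theta^n$ share the same limit $\qqq$ since $|\rho^n(t)-\theta^n(t)|\leq h\to 0$ and the trajectories are uniformly Lipschitz. The goal of the identification step is then to pass to the limit in the discrete inclusion \eqref{eq:diffincdisc} and show $\qqq$ solves \eqref{incldiff2}; uniqueness from Theorem~\ref{theo:wp} will upgrade subsequential convergence to convergence of the whole sequence.

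For the identification I would pass to the limit in a weak (integrated) formulation of \eqref{eq:diffincdisc}. The derivatives $\dot{\qqq}^n$ are bounded in $L^\infty$, hence weakly-$\ast$ convergent along a further subsequence to $\dot{\qqq}$; the perturbation terms $\ff^n$ converge appropriately, where Assumption~\eqref{riemann} on Riemann-integrability is exactly what is needed (as the remark after Theorem~\ref{theo:wp2} signals, and as presumably isolated in the referenced Lemma~\ref{convfaiblefn}). To recover the normal-cone inclusion in the limit I would use the prox-regular characterization of Definition~\ref{caracpr}: for each test point $\tilde{\qqq}\in Q(t)$ I would write the hypomonotonicity inequality satisfied by the discrete normal vectors $\ff(t_k^n,\qqq_k^n)-(\qqq_{k+1}^n-\qqq_k^n)/h\in\NN(\Qc(t_{k+1}^n,\qqq_k^n),\qqq_{k+1}^n)=\NN(Q(t_{k+1}^n),\qqq_{k+1}^n)$ (Lemma~\ref{cones_normaux}), integrate in time, and pass to the limit, using the uniform quadratic remainder bound supplied by prox-regularity to absorb the error terms.

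The hard part will be this last identification, and specifically controlling the mismatch introduced by the position-dependent convex approximation $\Qc(t_{k+1}^n,\qqq_k^n)$: the normal cone in \eqref{eq:diffincdisc} is attached to the linearized set built at the \emph{previous} point $\qqq_k^n$ and evaluated at the \emph{new} point $\qqq_{k+1}^n$, whereas the limiting inclusion must be expressed through $\NN(Q(t),\qqq(t))$. Lemma~\ref{cones_normaux} is the crucial device that converts the $\Qc$-normal cone at the base point into the genuine $Q$-normal cone, but one still must estimate, uniformly in $n$, the defect between $\NN(Q(t_{k+1}^n),\qqq_{k+1}^n)$ and the cone evaluated at the base point $\qqq_k^n$; this requires quantifying how well $\Qc(t,\qqq)$ approximates $Q(t)$ near $\qqq$ (the accuracy point flagged in the discussion preceding Lemma~\ref{cones_normaux}) and combining it with the $Ch$ velocity bound so that all such defects vanish as $h\to 0$.
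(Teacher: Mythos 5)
Your overall skeleton (a priori bounds, Ascoli--Arzel\`a, weak-$\star$ compactness of the derivatives, weak $L^1$ convergence of $\ff^n$ via Riemann integrability, identification of the limit, then uniqueness to upgrade to the full sequence) matches the paper's strategy, but both load-bearing steps contain genuine gaps.

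First, the discrete velocity bound $|\qqq_{k+1}^n-\qqq_k^n|\leq Ch$ does not follow from prox-regularity of $Q(t)$ plus the Lipschitz motion of $Q$, as you claim. What is needed is a bound on $d_{\Qc(t_{k+1}^n,\qqq_k^n)}(\qqq_k^n)$, and Proposition~\ref{Qlip} only controls $d_{Q(t_{k+1}^n)}(\qqq_k^n)$; since $\Qc(t_{k+1}^n,\qqq_k^n)$ is an \emph{inner} convex approximation of $Q(t_{k+1}^n)$, possibly much smaller near a ``corner'' where several constraints are nearly active, the distance to $\Qc$ can a priori be far larger than the distance to $Q$. The paper devotes Lemma~\ref{distQc} to exactly this estimate, proved by exhibiting a uniform ``good direction'' $\uu$ along which all nearly active constraints increase at a definite rate $\delta$ (Lemma~\ref{gooddir}); this is precisely where the strengthened Assumption~(\ref{inegtrianginverserho}) on the enlarged active set $I_\rho$ --- rather than (\ref{inegtrianginverse}) --- enters. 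Your proposal never produces this estimate. (Also, ``prox-regularity transferred to $\Qc$ via Lemma~\ref{cones_normaux}'' is not meaningful: $\Qc$ is convex, and that lemma is a statement about normal cones at the base point only.)

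Second, in the identification step you assert $\NN(\Qc(t_{k+1}^n,\qqq_k^n),\qqq_{k+1}^n)=\NN(Q(t_{k+1}^n),\qqq_{k+1}^n)$ ``by Lemma~\ref{cones_normaux}''. This is false: the lemma states $\NN(\Qc(t,\qqq),\qqq)=\NN(Q(t),\qqq)$ only when the evaluation point coincides with the base point of the linearization, whereas in the scheme the cone is evaluated at $\qqq_{k+1}^n\neq\qqq_k^n$. You do acknowledge this mismatch in your closing paragraph, but you leave its resolution as an unproved ``defect estimate,'' and that estimate is precisely the hard content of the theorem (see Remark~\ref{localprop}: the linearized sets do not vary regularly enough for the standard sweeping-process arguments). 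The paper's route is different: it never converts the discrete cones into $Q$-cones. It keeps the inclusion in terms of $\Qc$, uses the distance-function characterization of the normal cone (Lemma~\ref{fondamental} (c)--(d)) together with Mazur's lemma to reduce the problem to the convergence $d_{\Qc(\theta^n(t),\qqq^n(\rho^n(t)))}(\qqq(t)-\bfxi)\to d_{\Qc(t,\qqq(t))}(\qqq(t)-\bfxi)$ for $|\bfxi|$ small; this local continuity of projections onto the moving linearized sets is Proposition~\ref{lem:proj}, itself resting on Lemma~\ref{nul} and a Kuhn--Tucker multiplier bound obtained from (\ref{inegtrianginverse}). Only at the limit, where base point and evaluation point coincide, is Lemma~\ref{cones_normaux} invoked. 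Without an argument of this type, or a proof of your defect estimate, your identification step does not go through.
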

\begin{proof}
First we prove the boundedness of the sequence $(\qqq^n) $ and to do this, we use the same arguments as developed in~\cite{Thibrelax} with the following lemma.

\begin{lmm}
There exists $D>0 $ such that for $h $ small enough,  
$$d_{\Qc(t_{k+1}^n, \qqq_k^n)}(\qqq_k^n ) \leq D h.$$
 \label{distQc}
\end{lmm}

\noi Without loss of generality, we can assume that $T<1/(2D+4L)$ and $h$ small enough to apply Lemma~\ref{distQc}. \\
It follows from (\ref{lingro}) that 
\begin{align}
 | \qqq_{k+1}^n - (\qqq_k^n+ h \ff(t_k^n,\qqq_k^n)) |& = d_{\Qc(t_{k+1}^n, \qqq_k^n)}(\qqq_k^n+ h \ff(t_k^n,\qqq_k^n) ) \nonumber \\
& \leq d_{\Qc(t_{k+1}^n, \qqq_k^n)}(\qqq_k^n )+ h |  \ff(t_k^n,\qqq_k^n)| \nonumber \\
& \leq  h (D+L) (1+ |\qqq_k^n|).
\label{majproj}
\end{align}
We deduce that 
\begin{equation}
| \qqq_{k+1}^n - \qqq_k^n| \leq h (D+2L) (1+ |\qqq_k^n|)
 \label{difqk}
\end{equation}
which implies that 
$$|\qqq_{k+1}^n| \leq |\qqq_k^n|+ h (D+2L) (1 +|\qqq_k^n|)  .$$
Hence $$ |\qqq_{n}^n| \leq |\qqq_0|+ h (D+2L) \sum_{k=0}^{n-1} (1 +|\qqq_k^n|) \leq |\qqq_0| + nh (D+2L)  (1 +\max_{0\leq k\leq n}|\qqq_k^n|). $$
Since $T=nh $, we deduce that $$\max_{0\leq k\leq n}|\qqq_k^n| \leq \frac{|\qqq_0| +T(D+2L)}{1-T(D+2L)} \leq 2 (|\qqq_0| +T(D+2L)).$$ As a consequence, we obtain by letting $C:=2 (|\qqq_0| +T(D+2L)) $
 \begin{equation}
 \| \qqq^n \|_\infty \leq C.
\label{majqn}
 \end{equation}
By (\ref{difqk}) and (\ref{majqn}), we have 
 \begin{equation}
 \left \| \frac{d\qqq^n}{dt} \right \|_\infty \leq (D+2L)(1+C).
 \end{equation}

\noi By applying Arzela-Ascoli's Theorem, it can be easily shown that there exists a function $\qqq \in W^{1, \infty}([0,T],Q)$ and a subsequence (still denoted by $\qqq^n$) satisfying $$
\left\{
\begin{array}{l}
 \dsp\frac{d\qqq^n}{dt} \stackrel{\star}{\rightharpoonup} \frac{d\qqq}{dt}
\textmd{ in } L^{\infty} ([0,T],\R^{d})
\vsp \\   \dsp\qqq^n \xrightarrow[n \to \infty ]{} \qqq \textmd{ uniformly in
} [0,T]. 
\end{array}
\right.
$$ 
By (\ref{majqn}), it can be specified that 
\begin{equation}
 \| \qqq \|_\infty \leq C.
\label{majq}
\end{equation}
Since the time-interval is bounded, we get
$$
\dsp\frac{d\qqq^n}{dt} \rightharpoonup \frac{d\qqq}{dt}
\textmd{ in } L^{1} ([0,T],\R^{d}).
$$
Now let us check that the limit function $\qqq $ satisfies the differential inclusion (\ref{incldiff2}).
The beginning of the proof uses classical tools developped e.g. in~\cite{Thibbv}.
We want to show that
$$ \dsp \frac {d\qqq}{dt}(t) - \ff\left(t, \qqq(t) \right) \in -\NN(Q(t),\qqq(t))
\ \textmd{ for a.e. } t \in [0,T]$$ which is equivalent to $$\dsp \frac {d\qqq}{dt}(t) - \ff\left(t,\qqq(t) \right) \in -\NN(\Qc(t,\qqq(t)),\qqq(t))
 \ \textmd{ for a.e. } t \in [0,T] $$
by Lemma~\ref{cones_normaux}.

\begin{lmm}
 $$ \ff^n \rightharpoonup \ff(\cdot,\qqq(\cdot)) \textmd{ in } L^{1} ([0,T],\R^{d}). $$
\label{convfaiblefn}
\end{lmm}
\noi This lemma (later proved) implies that
$$
 \dsp \frac{d\qqq^n}{dt}-\ff^n \rightharpoonup \frac{d\qqq}{dt}- \ff(\cdot,\qqq(\cdot))
\textmd{ in } L^{1} ([0,T],\R^{d}).
$$ 
Consequently, by Mazur's Lemma, there exists a sequence $\zz^n \in   L^{1} ([0,T],\R^{d})$ satisfying
\begin{equation}
 \dsp \zz^n \in \textmd{Conv} \left( \dsp \frac{d\qqq^k}{dt}-\ff^k, \ k \geq n \right)
\label{defzn}
\end{equation}
and
$$\zz^n \xrightarrow[n \to \infty ]{} \frac{d\qqq}{dt}- \ff(\cdot,\qqq(\cdot))
\textmd{ in } L^{1} ([0,T],\R^{d}). $$
Extracting a subsequence, we may suppose that 
\begin{equation}
\zz^n \xrightarrow[n \to
  \infty ]{} \zz= \dsp \frac{d\qqq}{dt}- \ff(\cdot,\qqq(\cdot)) \ \textmd{a.e. in }[0,T].
\label{limzn}
\end{equation}
Furthermore, Inclusion~(\ref{eq:diffincdisc}) can be rewritten for almost every $t \in [0,T]$,
 \begin{equation}\frac{d\qqq^n}{dt}(t)-\ff^n(t) \in -
\NN(\Qc(\theta^n(t),\qqq^n(\rho^n(t))),\qqq^n(\theta^n(t))). 
\label{incl}
\end{equation}
Let $t  \in [0,T] $ such that $\zz_n(t) $ tends to $\zz(t)$ and the above differential inclusion holds.
By~(\ref{defzn}), it yields  $$ 
 \forall \bfxi \in \R^d, \ \langle
\zz^n(t),\bfxi \rangle \leq \sup_{k\geq n} \ \left\langle
\frac{d\qqq^k}{dt}(t)-\ff^k(t)    ,\  \bfxi\right \rangle.$$
Passing to the limit, we obtain 
 \begin{equation}
 \forall \bfxi \in \R^{d}, \ \langle
\zz(t),\bfxi \rangle \leq \limsup_{n} \ \left\langle
\frac{d\qqq^n}{dt}(t)-\ff^n(t)    ,\  \bfxi\right \rangle.
\label{eq:limsup}
\end{equation}
From~(\ref{incl}) and from Lemma~\ref{fondamental} (c), we get
%
\begin{equation*}
 \forall n, \  \forall\bfxi \in \R^{d},\  \left\langle
\frac{d\qqq^n}{dt}(t)-\ff^n(t)    ,\  \bfxi\right \rangle \leq
\left |\frac{d\qqq^n}{dt}(t)-\ff^n(t)\right | d_{\Qc(\theta^n(t),\qqq^n(\rho^n(t)))}(
 \qqq^n(\theta^n(t)) -\bfxi) .
\end{equation*}
Hence, by (\ref{majproj}) and (\ref{majqn}), this inequality can be specified
\begin{equation}
\forall n, \  \forall\bfxi \in \R^{d},\  \left\langle
\frac{d\qqq^n}{dt}(t)-\ff^n(t),\  \bfxi\right \rangle  \leq
 (D+L)(1+C) \  d_{\Qc(\theta^n(t),\qqq^n(\rho^n(t)))}(
 \qqq^n(\theta^n(t)) -\bfxi).
\label{eq:inegn}
\end{equation}

\noindent Proposition~\ref{lem:proj} with  $\qqq=\qqq(t) $, $\qqq_n=\qqq^n(\rho^n(t)) $ and $t_n=\theta^n(t)$ implies
there exists $\nu >0 $ such that for all $\bfxi \in \R^{d}, \ |\bfxi| < \nu $,
\begin{equation}
\left |d_{\Qc(\theta^n(t),\qqq^n(\rho^n(t)))}(\qqq(t) - \bfxi )- d_{\Qc(t,\qqq(t))}(
\qqq(t) -\bfxi )\right |\xrightarrow[n \to \infty ]{} 0. 
\label{convdist}
\end{equation}
 Finally, by passing to the limit in~(\ref{eq:inegn}), it comes from (\ref{eq:limsup}):
$$ \forall \bfxi \in \R^{d},\ |\bfxi| < \nu , \ \langle
\zz(t),\bfxi \rangle \leq (D+L)(1+C)   d_{\Qc(t,\qqq(t))}(\qqq(t) -\bfxi), $$ which is equivalent to 
\begin{equation} 
 \zz(t) \in -\NN(\Qc(t,\qqq(t)),\qqq(t )),
\label{enfin} 
\end{equation}
by Lemma~\ref{fondamental}
(d). 
The required result follows from (\ref{limzn}) provided we prove Lemmas~\ref{distQc}, \ref{convfaiblefn} and Proposition~\ref{lem:proj}.
\end{proof}
\begin{rmrk}
 To estimate the upper limit in~(\ref{eq:limsup}), we give a different method than the one written in~\cite{Thibbv, Thibrelax}. However the main argument is the same. Indeed in~\cite{Thibrelax}, moving sets $C(t)$ (only depending on time) are considered and the assumed regularity of the set-valued map $C$ involves  $$ d_{C(t^n)} (y) \rightarrow d_{C(t)} (y), \forall y, \textmd{ when } t^n \to t $$ which looks like (\ref{convdist}). This property allows the authors to prove Proposition 2.1 of~\cite{Thibbv}, which implies (\ref{enfin}) from (\ref{eq:limsup}).
In our case, we just need (and only have) a local continuity. More precisely, letting
$$ \Omega:=\left\{(t,\qqq),\ t\in[0,T],\ \qqq\in U(t) \right\}$$
and
$$ \Omega_c:=\left\{(t,\qqq),\ t\in[0,T],\ \qqq\in Q(t) \right\},$$
the map 
$$\begin{array}{cccc}
  \phi: &\Omega \times \R^{d} &\longmapsto &\R  \vsp \\
&(t,\qqq, \tilde{\qqq}) & \longrightarrow &d_{\Qc(t,\qqq)}(\tilde{\qqq})
\end{array}$$
is continuous on a neighbourhood of the set $\{(t,\qqq,\tilde{\qqq}), \ \qqq=\tilde{\qqq}\}$ in $\Omega_c \times \R^d$.
In fact, the next proposition ensures that the map
$$\begin{array}{ccc}
 \Omega \times \R^{d} &\longmapsto &\R^{d}  \vsp \\
(t,\qqq, \tilde{\qqq}) & \longrightarrow &\P_{\Qc(t,\qqq)}(\tilde{\qqq})
\end{array}$$
 is continuous on the same neighbourhood (which is sufficient).
\label{localprop}
\end{rmrk}

\subsection{Proof of the claimed results}

\begin{proof}[Proof of Lemma~\ref{distQc}]
This proof is similar to the one developed for Proposition~\ref{Qlip}.
 Let $\uu $ given by Lemma \ref{gooddir} with $t=t_k^n $ and $\qqq= \qqq_k^n \in Q(t_k^n)$, we introduce 
 $\zz(s):=\qqq_k^n+s\uu$ with $s >0$. We claim that for $s \geq  \dsp\frac{2h\beta}{\delta} $ (where $c $ is introduced in (\ref{Ui})), 
$$
\dsp  \forall i\in I_\rho(t_k^n,\qqq_k^n), \qquad \Delta_i:=g_i(t_{k+1}^n,\qqq_k^n) + \ll \nabla_\qqq \, g_i( t_{k+1}^n,\qqq_k^n) \virg \zz(s)- \qqq_k^n\rr \geq 0.$$ 
Indeed, 
by Assumption (\ref{dtg}) and Proposition~\ref{schemewd}, $  g_i(t_{k+1}^n,\qqq_k^n) \geq g_i(t_{k}^n,\qqq_k^n) - \beta h \geq -\beta h$.
As in Proposition~\ref{schemewd}, it can be proved that for all $s \in [t_{k}^n,t_{k+1}^n ]$, $ \qqq_k^n \in U(s)$. Then it follows from Assumption (\ref{dtgradg}) that $$ \ll \nabla_\qqq \, g_i( t_{k+1}^n,\qqq_k^n) \virg \zz(s)- \qqq_k^n\rr \geq \ll \nabla_\qqq \, g_i( t_{k}^n,\qqq_k^n) \virg \zz(s)- \qqq_k^n\rr -M h s.$$
For $h \leq \frac{\delta}{2M} $ and by definition of $\uu$, $$\Delta_i \geq -\beta h -Mhs + s\delta \geq \dsp s \frac{\delta}{2}- h\beta\geq 0.$$
Moreover, for $  i\notin I_\rho(t_k^n,\qqq_k^n)$, Assumption (\ref{dtg}) yields $$ g_i( t_{k+1}^n,\qqq_k^n) \geq g_i( t_{k}^n,\qqq_k^n)-\beta h \geq \rho  -\beta h   .$$ Consequently for $  i\notin I_\rho(t_k^n,\qqq_k^n)$ and $h \leq \frac{\rho}{2\beta} $, $$ g_i( t_{k+1}^n,\qqq_k^n) +\ll \nabla_\qqq \, g_i( t_{k+1}^n,\qqq_k^n) \virg \zz(s)- \qqq_k^n\rr \geq \rho  -\beta h - \beta s \geq \frac{\rho}{2} - \beta s.
 $$
For $h$ small enough, for $s \in [\frac{2h\beta}{\delta}, \frac{\rho}{2\beta}]$, $\zz(s)$ belongs to $\Qc( t_{k+1}^n,\qqq_k^n) $.
 So we have proved that
 $$ d_{\Qc( t_{k+1}^n,\qqq_k^n)}(\qqq_k^n) \leq \inf_{s \delta  \geq 2 h\beta} |\qqq_k^n -\zz(s)| \leq \frac{2 \beta}{\delta}h.$$
\end{proof}

\begin{proof}[Proof of Lemma~\ref{convfaiblefn}]
We want to show that $\ff^n$ weakly converges to $ \ff(\cdot,\qqq(\cdot))$ in $L^1([0,T], \R^d)$.
This property rests on the boundedness of the sequence $ (\ff^n)_n$ in $L^\infty([0,T], \R^d)$.
Let us consider $\Psi \in L^\infty([0,T], \R^d) $ supposed to be continuous.
$$ \begin{array}{l}
\dsp \int_0^T \ll \ff^n(t), \Psi(t) \rr dt \vsp \\
 = \dsp \sum_{k=0}^{n-1} \int_{t_k^n}^{t_{k+1}^n} \ll \ff(t_k^n, \qqq(t_k^n) ), \Psi(t)\rr dt
+ \sum_{k=0}^{n-1} \int_{t_k^n}^{t_{k+1}^n} \ll \ff(t_k^n, \qqq^n(t_k^n) )-\ff(t_k^n, \qqq(t_k^n) ) ,\Psi(t) \rr dt 
 := I_n^1+ I_n^2. 
\end{array}  $$
By (\ref{lip}), $$|I_n^2| \leq KT \| \Psi\|_\infty \| \qqq^n-\qqq \|_\infty \xrightarrow[n \to +\infty]{} 0.$$ 
Furthermore
$$ \begin{array}{lll}
    I_n^1&=& \dsp \sum_{k=0}^{n-1} h \ll \ff(t_k^n, \qqq(t_k^n) ), \Psi(t_k^n) \rr 
 +\sum_{k=0}^{n-1} \int_{t_k^n}^{t_{k+1}^n} \ll \ff(t_k^n, \qqq(t_k^n) ), \Psi(t)-\Psi(t_k^n) \rr dt := J_n^1+ J_n^2. 
   \end{array}
$$
The map $\ff(\cdot, \qqq(\cdot))$ is bounded by $L(1+C)$ according to (\ref{lingro}) and (\ref{majq}). Let us check that 
\begin{equation} J_n^1\xrightarrow[n \to +\infty]{} \int_0^T \ll \ff(t, \qqq(t)), \Psi(t) \rangle dt .
 \label{limJ1}
\end{equation}
For all $\eta >0 $, there exists $m \in \mathbb{N} $ such that for all $t,s \in [0,T]$, $|\qqq(t)-\qqq(s)| < \eta $ if $|t-s| <\delta:=T/m$ since $\qqq \in W^{1, \infty}([0,T],Q) $. For all $t,s \in [0,T]$ satisfying $|t-s| <\delta$, (\ref{lip}) yields $| \ff(t,\qqq(t))-\ff(t,\qqq(s))| < K \eta $.
For $n >m $,
$$ J_n^1 = \sum_{k=0}^{n-1} h \ll \ff(t_k^n, \qqq(t_k^n) )-\ff(t_k^n, \qqq(\chi(t_k^n)) ), \Psi(t_k^n) \rr +  \sum_{k=0}^{n-1} h \ll\ff(t_k^n, \qqq(\chi(t_k^n)) ), \Psi(t_k^n) \rr, $$
where $ \dsp \chi(t) := \delta \left\lfloor \frac{t}{\delta} \right \rfloor$.
Obviously, $$ \left |\sum_{k=0}^{n-1} h \ll \ff(t_k^n, \qqq(t_k^n) )-\ff(t_k^n, \qqq(\chi(t_k^n)) ), \Psi(t_k^n) \rr   \right | \leq TK\eta \| \Psi\|_\infty.$$
Moreover $$ \sum_{k=0}^{n-1} h \ll \ff(t_k^n, \qqq(\chi(t_k^n)) ), \Psi(t_k^n) \rr = \sum_{i=0}^{m-1} \sum_{\genfrac{}{}{0pt}{}{k}{t_k^n \in [i\delta, (i+1)\delta[ }} h \ll \ff(t_k^n, \qqq(i \delta) ), \Psi(t_k^n) \rr. $$
Since for all $i \in\{0,...,m-1\} $, $\ff(\cdot, \qqq(i\delta))$ and $\Psi $ are bounded and Riemann integrable      by (\ref{riemann}), we have $$ \sum_{\genfrac{}{}{0pt}{}{k}{t_k^n \in [i\delta, (i+1)\delta[ }} h \ll \ff(t_k^n, \qqq(i \delta) ), \Psi(t_k^n) \rr \xrightarrow[n \to \infty]{} \int_{i \delta}^{(i+1)\delta} \ll \ff(t, \qqq(i \delta)), \Psi(t) \rr dt.  $$ 
Thus, as previously, $$ \limsup_{n \to \infty} \ \left|\sum_{k=0}^{n-1} h \ll \ff(t_k^n, \qqq(\chi(t_k^n)) ), \Psi(t_k^n) \rr -\int_{0}^{T} \ll \ff(t, \qqq(t)), \Psi(t) \rr dt \right| \leq TK \eta \| \Psi\|_\infty.  $$
This concludes the proof of (\ref{limJ1}). 
Moreover $$|J_n^2| \leq TL(1+C) \sup_{|t-s|\leq T/n} | \Psi(t)-\Psi(s) |\xrightarrow[n \to +\infty]{} 0 .$$ 
Then, we deduce that for all continuous map $\Psi \in L^\infty([0,T], \R^d) $, 
\begin{equation}
 \int_0^T \ll \ff^n(t), \Psi(t) \rr dt\xrightarrow[n \to +\infty]{}\int_0^T \ll \ff(t, \qqq(t)), \Psi(t) \rangle dt .
 \label{casreg}
\end{equation}

\noi Now let $\Phi \in L^\infty([0,T], \R^d) $, there exists a sequence of continuous maps $ (\Psi_p)_p$ satisfying $$ \| \Psi_p -\Phi\|_1 \xrightarrow[p \to +\infty]{} 0 .$$
Let $\varepsilon >0 $, there exists $p \in \mathbb{N} $ such that  $ L(1+C)\| \Psi_p -\Phi\|_1 \leq \varepsilon  $. For that integer $p$, we write
$$
\begin{array}{l}
 \dsp \left| \int_0^T \ll \ff^n(t), \Phi(t) \rr dt - \int_0^T \ll \ff(t, \qqq(t)), \Phi(t) \rr dt  \right| \vsp \\ 
\dsp \leq \left| \int_0^T \ll \ff^n(t)-\ff(t, \qqq(t)) ,\Psi_p(t) \rr dt +  \int_0^T \ll \ff(t, \qqq(t)),\Psi_p(t)-\Phi(t)  \rr dt + \int_0^T \ll \ff^n(t), \Phi(t)-\Psi_p(t) \rr dt  \right| \vsp \\ 

\dsp \leq  \int_0^T \left| \ll \ff^n(t)-\ff(t, \qqq(t)) ,\Psi_p(t) \rr \right|dt +  \int_0^T \left|\ll \ff(t, \qqq(t)),\Psi_p(t)-\Phi(t)  \rr\right| dt + \int_0^T\left| \ll \ff^n(t), \Phi(t)-\Psi_p(t) \rr \right|dt 

\vsp \\ \leq K_1 + K_2 +K_3.
\end{array}
$$
For $i=2$ and $3$, $ K_i \leq L(1+C)\| \Psi_p -\Phi\|_1 \leq \varepsilon  $ and by (\ref{casreg}), we know that for $n $ large enough, $K_1 \leq \varepsilon$. So we deduce that for all $\Phi \in L^\infty([0,T], \R^d) $, 
\begin{equation*}
\int_0^T \ll \ff^n(t), \Phi(t) \rr dt\xrightarrow[n \to +\infty]{}\int_0^T \ll \ff(t, \qqq(t)), \Phi(t) \rangle dt ,
\end{equation*} 
which concludes the proof.
\end{proof}

\begin{prpstn}
Let $t\in[0,T] $, $\qqq \in Q(t) $, a
sequence $(t_n) \in [0,T]^\mathbb{N}$ converging to $t $ and a
sequence $(\qqq_n ) \in (U(t_n))_n $ converging to $\qqq$ such that $d_{\Qc(t_n,\qqq_n)}(\qqq_n) $ tends to zero. For all  $\tilde{\qqq} \in
\R^{d} $, we denote $\tilde{\pp}$ (respectively
  $\tilde{\pp_n}$) the projection of $\tilde{\qqq}$ onto $\Qc(t,\qqq) $
(respectively onto $\Qc(t_n,\qqq_n)$). Then there exists $ \nu >0 $ so that
for all $\tilde{\qqq} \in
B(\qqq, \nu) $, the sequence $(\tilde{\pp_n}) $
converges to  $\tilde{\pp}$.
\label{lem:proj}
\end{prpstn}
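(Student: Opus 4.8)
The plan is to describe the projection through its Karush--Kuhn--Tucker system (Lemma~\ref{point-selle}) and then run a compactness argument, the crux being to bound the Lagrange multipliers by means of the inverse triangle inequality~(\ref{inegtrianginverserho}). Note first that $\qqq\in Q(t)$ gives $\qqq\in\Qc(t,\qqq)$, so $\tilde{\pp}$ is well defined, while $\Qc(t_n,\qqq_n)\neq\emptyset$ for $n$ large since $d_{\Qc(t_n,\qqq_n)}(\qqq_n)\to 0$. I would start with an a priori bound keeping all the projections close to $\qqq$: for $\tilde{\qqq}\in B(\qqq,\nu)$,
$$ |\tilde{\pp}_n-\tilde{\qqq}| = d_{\Qc(t_n,\qqq_n)}(\tilde{\qqq}) \leq |\tilde{\qqq}-\qqq_n| + d_{\Qc(t_n,\qqq_n)}(\qqq_n), $$
whence $|\tilde{\pp}_n-\qqq| \leq 2\nu + |\qqq-\qqq_n| + d_{\Qc(t_n,\qqq_n)}(\qqq_n) \leq 3\nu$ for $n$ large, and similarly for $\tilde{\pp}$.

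Next I would record, from Assumptions~(\ref{dtg}),~(\ref{dtgradg}) and~(\ref{hessg}), that $g_i$ and $\nabla_\qqq g_i$ are jointly (locally Lipschitz, hence) continuous in $(t,\qqq)$ on the relevant open sets, so that $g_i(t_n,\qqq_n)\to g_i(t,\qqq)$ and $\nabla_\qqq g_i(t_n,\qqq_n)\to\nabla_\qqq g_i(t,\qqq)$. Let $\bflambda_n=(\lambda_i^n)$ be the multipliers attached to $\tilde{\pp}_n=\P_{\Qc(t_n,\qqq_n)}(\tilde{\qqq})$ by Lemma~\ref{point-selle}. If $\lambda_i^n>0$, complementarity and~(\ref{gradg}) yield $g_i(t_n,\qqq_n)=-\ll\nabla_\qqq g_i(t_n,\qqq_n),\tilde{\pp}_n-\qqq_n\rr\leq \beta|\tilde{\pp}_n-\qqq_n|\leq 3\beta\nu+o(1)$; choosing $\nu$ with $3\beta\nu<\rho/2$ forces, for $n$ large, every active index to belong to the \emph{fixed} set $I_\rho(t,\qqq)$. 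On that set the gradients at $(t_n,\qqq_n)$ converge to those at $(t,\qqq)$, and writing $\nabla_\qqq g_i(t_n,\qqq_n)=\nabla_\qqq g_i(t,\qqq)+r_i^n$ with $|r_i^n|\to 0$, together with $|\nabla_\qqq g_i|\geq\alpha$ from~(\ref{gradg}), one deduces a perturbed version of~(\ref{inegtrianginverserho}): for $n$ large and all $\mu_i\geq 0$,
$$ \sum_{i\in I_\rho(t,\qqq)}\mu_i\,|\nabla_\qqq g_i(t_n,\qqq_n)| \leq 2\gamma\left|\sum_{i\in I_\rho(t,\qqq)}\mu_i\,\nabla_\qqq g_i(t_n,\qqq_n)\right|. $$
Applying this to $\tilde{\pp}_n-\tilde{\qqq}=\sum_i\lambda_i^n\nabla_\qqq g_i(t_n,\qqq_n)$ (whose active indices lie in $I_\rho(t,\qqq)$) gives $\frac{\alpha}{2\gamma}\sum_i\lambda_i^n\leq|\tilde{\pp}_n-\tilde{\qqq}|$, which is bounded; hence $(\bflambda_n)$ is bounded.

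Finally, $(\tilde{\pp}_n)$ and $(\bflambda_n)$ being bounded, every subsequence has a further subsequence along which $\tilde{\pp}_n\to\pp^\star$ and $\bflambda_n\to\bflambda^\star\geq 0$. Passing to the limit in the three relations of the system $(P_{t_n,\qqq_n,\tilde{\qqq}})$ --- using $\qqq_n\to\qqq$ and the continuity of $g_i,\nabla_\qqq g_i$ --- shows that $(\pp^\star,\bflambda^\star)$ solves $(P_{t,\qqq,\tilde{\qqq}})$, so $\pp^\star=\P_{\Qc(t,\qqq)}(\tilde{\qqq})=\tilde{\pp}$ by Lemma~\ref{point-selle}. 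As this limit is independent of the subsequence and the projection onto a convex set is unique, the whole sequence $(\tilde{\pp}_n)$ converges to $\tilde{\pp}$. I expect the multiplier bound to be the main obstacle: one must first exclude the constraints that are inactive near $(t,\qqq)$, so that~(\ref{inegtrianginverserho}) --- available only on $I_\rho(t,\qqq)$ --- can be transported to the perturbed gradients at $(t_n,\qqq_n)$ with a controlled constant; the choice of $\nu$ and the continuity estimates are precisely what make this possible.
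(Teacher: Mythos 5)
Your proof is correct, and it follows the same overall architecture as the paper's: characterize both projections through the Kuhn--Tucker system of Lemma~\ref{point-selle}, bound the multipliers $\bflambda^n$ by combining an inverse triangle inequality with the convergence $\nabla_\qqq \, g_i(t_n,\qqq_n)\to\nabla_\qqq \, g_i(t,\qqq)$ obtained from Assumptions (\ref{gradg})--(\ref{hessg}), then conclude by compactness, passage to the limit in the system $\left(P_{t_n,q_n,\tilde{q}}\right)$, and uniqueness of the projection onto the convex set $\Qc(t,\qqq)$.

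The one place where you genuinely deviate is the localization of the multipliers, and the difference is worth recording. The paper's Lemma~\ref{nul} shows $\lambda_i^n=\lambda_i=0$ for every $i\notin I(t,\qqq)$, exploiting the gap $g_i(t,\qqq)>2\varepsilon$ of the inactive constraints; the radius $\nu=\varepsilon/(8\beta)$ then depends on the point $(t,\qqq)$, but this allows the paper to apply the \emph{weaker} hypothesis (\ref{inegtrianginverse}), stated only on the exact active set, to the unperturbed gradients at $(t,\qqq)$, absorbing the matrix discrepancy $\varepsilon_n=\|A_{t,\qqq}(t_n,\qqq_n)-A_{t,\qqq}(t,\qqq)\|\to 0$ into the linear estimate $|\bflambda^n|\left(1-\varepsilon_n\gamma/\alpha\right)\leq 2\nu\gamma/\alpha$. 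You instead use complementary slackness to push every index with $\lambda_i^n>0$ into the larger set $I_\rho(t,\qqq)$, which only requires $\nu<\rho/(6\beta)$, a threshold uniform in $(t,\qqq)$; the price is that you must invoke the stronger hypothesis (\ref{inegtrianginverserho}) -- available here, since Theorem~\ref{theo:qhq} assumes it -- and transport it to the perturbed gradients. Your ``perturbed inequality with constant $2\gamma$'' is indeed valid for $n$ large (a short computation shows it holds as soon as $\max_i|r_i^n|\leq\alpha/(2\gamma+1)$, and $\max_i|r_i^n|\leq M(|t-t_n|+|\qqq-\qqq_n|)\to 0$ exactly as in the paper). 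So the trade-off is: the paper's route is more economical in hypotheses, since (\ref{inegtrianginverse}) suffices for this proposition, while yours yields a radius $\nu$ depending only on the structural constants $\rho,\beta$ rather than on the inactive-constraint gap at the particular point, a mild uniformity gain. Both variants are sound and lead to the same compactness conclusion.
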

\begin{proof}
By Lemma~\ref{point-selle}, the vector $ \tilde{\pp}$ satisfies System~(\ref{pqqt}).
We obtain similar systems denoted by $\left(P_{t_n,q_n,\tilde{q}}\right)$ for all $\tilde{\pp_n}$ in substituting $t$, $\qqq $
 and $\lambda_{i} $ by $t_n $, $\qqq_n $ and $\lambda_{i}^n $. 
The following lemma (which will be later proved) claims that the nonzero Kuhn-Tucker multipliers $\lambda_i, \  \lambda_i^n $ are associated to an index $i $ belonging to $I(t,\qqq)$ (defined by (\ref{def:I})).

\begin{lmm}
There exist $\nu > 0 $ and $M_0 \in \mathbb{N} $ such that for all
$ n \geq M_0$ and all $\tilde{\qqq} \in B(\qqq, \nu)$, we have
:
$$\dsp |\tilde{\pp_n}-\tilde{\qqq} |\leq 2\nu, $$ and 
$$ \dsp \lambda_{i}^n 
=\lambda_{i}= 0, \textmd{ if } g_i(t,\qqq) >0 ,$$ where $(\tilde{\pp}, \bflambda)$ and $(\tilde{\pp_n}, \bflambda^n)$ are
respectively solutions of
$\left(P_{t,q,\tilde{q}}\right)$ and 
$\left(P_{t_n,q_n,\tilde{q}}\right)$.
\label{nul}
\end{lmm}
\noi There exists $M_1 \in \mathbb{N}$ such that for $n\geq M_1$, $|\qqq-\qqq_n|<c .$ 
 For all $s \in [0,T]  $ and $\qqq,\ \xx \in Q(t) $, we denote by $A_{t,\qqq}(s,\xx)$ the $d \times |I(t,\qqq)|  $ matrix defined as follows:
$$A_{t,\qqq}(s,\xx):=\left( \nabla_\qqq \, g_i(s,\xx) \right)_{i \in I(t,\qqq)}.$$ 
Let $M_0 $ and $\nu $ be fixed by Lemma~\ref{nul}.
With the previous notation, for $n\geq \max(M_0, M_1)$ and $ \tilde{\qqq } \in B(\qqq, \nu)$, the first equation of $\left(P_{t_n, q_n,\tilde{q}}\right)$ can be written as $$A_{t,\qqq}(t_n,\qqq_n)[\bflambda^n] =\tilde{\pp_n}-\tilde{\qqq}. $$
By (\ref{gradg}) and (\ref{inegtrianginverse}), we have $$ |\bflambda^n| \leq \dsp \frac{\gamma}{\alpha} \left|A_{t,\qqq}(t,\qqq)  [\bflambda^n]\right|, $$
where $|\bfmu| $ represents the Euclidean norm of $ \bfmu \in \R^{|I(t,\qqq)|}$.
Moreover $$ |A_{t,\qqq}(t,\qqq)  [\bflambda^n]| \leq |\tilde{\pp_n}-\tilde{\qqq} | + |(A_{t,\qqq}(t_n,\qqq_n) -A_{t,\qqq}(t,\qqq))[\bflambda^n] |.$$
Thus by Lemma~\ref{nul}, $$ |\bflambda^n| \leq \dsp \frac{\gamma}{\alpha} \left(2 \nu +  \varepsilon_n |\bflambda^n| \right),$$ with $ \varepsilon_n: = \|(A_{t,\qqq}(t_n,\qqq_n) -A_{t,\qqq}(t,\qqq) \|.$
Hence $$ |\bflambda^n| \left(1-\frac{ \varepsilon_n \gamma }{\alpha}\right) \leq  \frac{2 \nu \gamma }{\alpha }.$$
Furthermore, $\varepsilon_n \leq \sqrt{p} \max_{j \in I(t,\qqq)} |\nabla_\qqq \, g_j(t_n,\qqq_n)- \nabla_\qqq \, g_j(t,\qqq) |. $
Since $\qqq_n \in U(t) $, it can be written
$$ \begin{array}{lll}
|\nabla_\qqq \, g_j(t_n,\qqq_n)- \nabla_\qqq \, g_j(t,\qqq) |& \leq &|\nabla_\qqq \, g_j(t_n,\qqq_n)- \nabla_\qqq \, g_j(t,\qqq_n) |+ 
|\nabla_\qqq \, g_j(t,\qqq_n)- \nabla_\qqq \, g_j(t,\qqq) |\vsp \\ &\leq &M(|t-t_n|+ |\qqq- \qqq_n|),
\end{array}
$$
by Assumptions~(\ref{dtgradg}) and (\ref{hessg}).
Finally  $$\varepsilon_n \leq \sqrt{p}M(|t-t_n|+ |\qqq- \qqq_n| ) \xrightarrow[n \to \infty]{}0$$ and so we deduce that the sequence $(\bflambda^n)$ is bounded. A convergent subsequence can be also extracted and by passing to the limit in System $\left(P_{t_n, q_n,\tilde{q}}\right)$, we obtain that the corresponding subsequence of $(\tilde{\pp_n})$ 
converges to a point $\pp_\infty$ which satisfies System $\left(P_{t, q,\tilde{q}}\right)$. Lemma~\ref{point-selle} implies that $\pp_\infty =\tilde{\pp} $ and we can conclude by compacity arguments. 
\end{proof}
\noi It remains to prove Lemma~\ref{nul}.
\begin{proof}[Proof of Lemma~\ref{nul}]
This result is a consequence of the third equation of problems $\left(P_{t,q,\tilde{q}}\right)$ and
$\left(P_{t_n, q_n,\tilde{q}}\right)$.
By definition of  $ I(t,\qqq)$,$$  \dsp \exists
\varepsilon >0, \  \forall \, i \notin I(t,\qqq), \ \ g_i(t,\qqq) > 2
\varepsilon. $$
 Setting $ \dsp \nu =\frac{\varepsilon}{8 \beta}$, as $(\qqq_n)_n$ and $(t_n)_n $ respectively converge to $\qqq$ and $t$, it yields
$$ \dsp \exists
M_0 >0, \forall n \geq M_0, \ g_i(t_n,\qqq_n) \geq \varepsilon, \ \forall \,i \notin I(t,\qqq) \virg d_{\Qc(t_n,\qqq_n)}(\qqq_n) \leq \frac{\nu}{2} \textmd{ and }
|\qqq_n -\qqq | \leq \frac{\nu}{2}.$$
 Let
 $\tilde{\qqq} \in B(\qqq, \nu) $ , since $ \dsp \tilde{\pp} =  \textrm{P}_{\Qc(t,\qqq)} (\tilde{\qqq}) $ and
 $ \dsp \qqq \in \Qc(t,\qqq) $ (because $\qqq \in Q(t) $),
we obtain
$$
 \dsp |\tilde{\pp}-\tilde{\qqq} |\leq |\qqq -\tilde{\qqq}| \leq \nu $$
and consequently,
\begin{equation}
|\tilde{\pp}- \qqq|\leq  |\tilde{\pp}-\tilde{\qqq} |
+|\tilde{\qqq}-\qqq| \leq 2|\tilde{\qqq}-\qqq| \leq 2 \nu.
\label{inegproj}
\end{equation}
Moreover, as $ \dsp \tilde{\pp_n} = \textrm{P}_{\Qc(t_n,\qqq_n)}
(\tilde{\qqq}) $,
we get
$$
\dsp |\tilde{\pp_n}-\tilde{\qqq} |\leq |\qqq_n
-\tilde{\qqq}| + d_{\Qc(t_n,\qqq_n)}(\qqq_n)\leq |\tilde{\qqq}- \qqq |+|\qqq-\qqq_n|+d_{\Qc(t_n,\qqq_n)}(\qqq_n) \leq \nu+\frac{\nu}{2}+\frac{\nu}{2}=2\nu.
$$
 Hence,
\begin{equation}
|\tilde{\pp_n}- \qqq_n|\leq  |\tilde{\pp_n}-\tilde{\qqq} |
+|\tilde{\qqq}- \qqq_n| \leq 2|\tilde{\qqq}- \qqq_n| \leq 
 4 \nu 
.
\label{inegprojn}
\end{equation}
For $i \notin I(t,\qqq) $, it follows from (\ref{gradg}), (\ref{inegproj}) and (\ref{inegprojn})  $$\dsp g_i(t,\qqq)+ \langle \nabla_\qqq \, g_i(t,\qqq),
\tilde{\pp}-\qqq \rangle \geq 2 \varepsilon - 2\nu \beta >0
\textmd{ and }
 \dsp g_i(t_n,\qqq_n)+ \langle \nabla_\qqq \, g_i(t_n,\qqq_n),
\tilde{\pp_n}-\qqq_n \rangle \geq \varepsilon - 4 \nu \beta >0.$$
The third equation of problems $\left(P_{t,q,\tilde{q}}\right), \ \left(P_{t_n,q_n,\tilde{q}}\right)$ and the nonnegativity of Kuhn-Tucker multipliers $\lambda_i, \  \lambda_i^n $ permit us to conclude.
\end{proof}


\section{Applications}
\label{sec:appli}
\subsection{A case in point}
Here we deal with a simple case associated to a well-known game: the labyrinth tabletop game. The aim of this game is to maneuver a steel ball through a wooden labyrinth by tilting the surface in order to move the ball to the target. The angle of the labyrinth need to be carefully controlled so that the ball will not fall into the holes. \\
We consider a particular labyrinth represented in Figure~\ref{fig:laby}, with several holes (corresponding to the black disks).
\begin{figure}
\centering
\begin{tabular}{c}
\psfrag{A}[l]{S}
\psfrag{B}[l]{T}
\psfrag{C}[l]{P$_1$}
\psfrag{D}[l]{P$_2$}
\psfrag{f1}[l]{$\ff_1$}
\psfrag{f2}[l]{$\ff_2$}
\psfrag{f3}[l]{$\ff_3$}
\includegraphics[width=0.65\textwidth]{./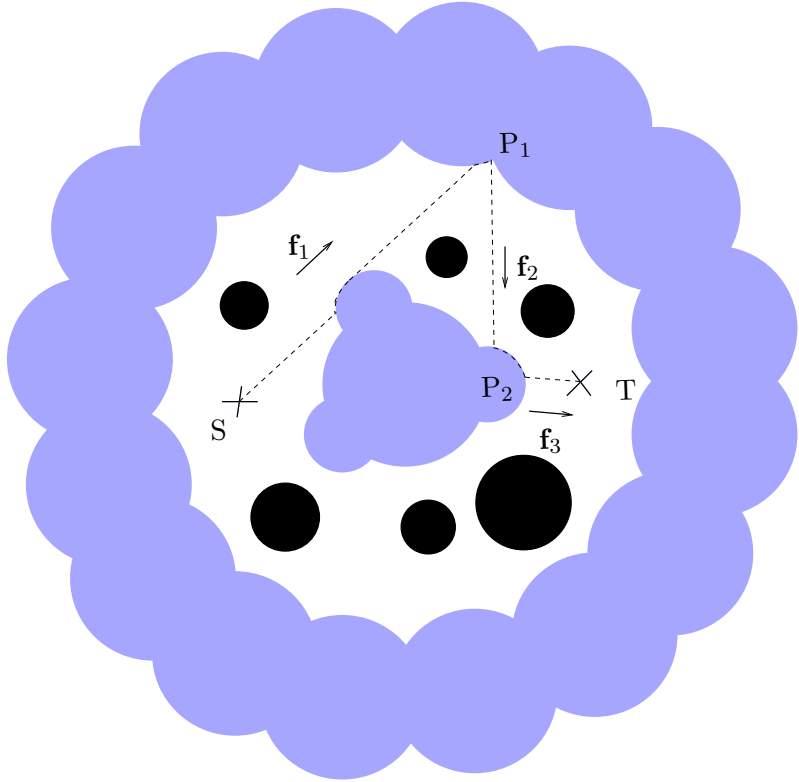}
\end{tabular}
\caption{Labyrinth game.}
\label{fig:laby}
\end{figure}
Initially the ball is located at the starting point S and it has to be rolled to the target T.
The quasi-static evolution of this ball can be described by the following first order differential inclusion: $$ \dsp \frac{d\qqq}{dt}(t) + \NN(Q,\qqq(t) ) \ni \ff(t),$$ where $\qqq \in \R^2$ is the position of the ball in the labyrinth surface, $Q$ is the complementary set of the grey obstacles and the holes and $\ff(t) $ is the perturbation caused by the leaning of the labyrinth plane. In Figure~\ref{fig:laby}, we plotted the trajectory obtained with the following field $\ff$:
beginning with $\ff=\ff_1$, the ball arrives at the point P$_1$, then with $\ff=\ff_2$, the ball goes to the point P$_2$ and finally submitted to $\ff=\ff_3 $ it reaches the target. \\

\noi Obviously, the set $Q$ is uniformly prox-regular and Assumptions~(\ref{gradg}), (\ref{hessg}) and (\ref{inegtrianginverse}) are satisfied. The evolution problem is well-posed by Theorem~\ref{theo:wp} and the trajectory of the ball can be estimated by the numerical scheme presented in Section~\ref{sec:schema}.

\subsection{An application to a crowd motion model}
\label{ssec:crowd}
The aim of this subsection is to apply the previous results to a model of crowd motion in emergency evacuation. We refer the reader to \cite{esaim,CRAS,thesejv,M2AN} for a complete and detailed description of this model. 

\subsubsection{Presentation of the model}

\mb We quickly recall the model. Its asset is to directly handle numerous contacts which are characteristic of emergency situations. This model allows us to deal with local interactions between people in order to describe the whole dynamics of the pedestrian traffic. This microscopic model for crowd motion rests on two principles. On the one hand, each individual has a spontaneous velocity that he would like to have in the absence of other people. On the other hand, the actual velocity must take into account congestion. Those two principles lead to define the actual velocity field as the projection of the spontaneous velocity over the set of admissible velocities (regarding the non-overlapping constraints).

\mb We consider $N$ persons identified to rigid disks. For convenience, the disks are supposed here to have the same radius $r$. The
center of the $i$-th disk is denoted by $\qq_i\in \R^2$. Since overlapping is forbidden, the  
vector of positions $\qqq=(\qq_1,..,\qq_N) \in
\R^{2N}$ has to belong to the ``set of feasible configurations'', defined by 
\begin{equation}
\label{eq:Q} Q:=\left\{ \qqq  \in
\R^{2N},\ D_{ij}( \qqq) \geq 0\quad   \forall \,i \neq j  \right\},  
\end{equation}
where $ D_{ij}(\qqq)=|\qq_i-\qq_j|-2r $ is the signed distance between disks $i$ and $j$. 

\mb We denote by $\UU(\qqq)=(U_1(\qq_1),..,U_N(\qq_N)) \in \R^{2N}$ the global spontaneous velocity of the crowd.
We introduce the ``set of feasible velocities'' defined by:
 $$ \CCC_{\qqq}=\left\{  \vv  \in
\R^{2N} , \ \forall i<j \hspace{5mm} D_{ij}(\qqq)=0 \hspace{3mm}
\Rightarrow \hspace{3mm}
\langle \GG_{ij}(\qqq),\vv \rangle \geq 0 \right\},  $$ with
$$\GG_{ij}(\qqq)=\nabla D_{ij}(\qqq)=
(0,\dots,0, -\ee_{ij}(\qqq),0,\dots,0,\ee_{ij}(\qqq),0,\dots,0 )\in
\R^{2N}  $$ and $\ee_{ij}(\qqq)= \frac{\qq_j-\qq_i}{|\qq_j-\qq_i|}$.
The actual velocity field is defined as the feasible field which is the closest to $\UU$ in the least square sense, which writes
\begin{equation}
 \label{eq:model1}
\frac{d\qqq}{dt} = \P_{\CCC_{\qqq}}\left(\UU(\qqq)\right),
\end{equation}
where $\P_{\CCC_{\qqq}}$ denotes the Euclidean projection onto the closed convex cone $\CCC_{\qqq}$.
We deduce from Farkas Lemma (see e.g.~\cite{Ciarlet}) and Proposition~\ref{coneprox} the following results:
\begin{prpstn} \label{prop:model1}  The negative polar cone $\NNN_\qqq$ of $\CCC_\qqq $ defined by
$$ \NNN_\qqq := \CCC_\qqq^\circ
:=  \left\{\ww \in \R^{2N},\ \langle \ww , \vv \rangle \leq 0 \quad \forall \vv \in \CCC_\qqq \right\},$$
is equal to the proximal normal cone $\NN(Q,\qqq)$. More precisely,
$$ \NNN_\qqq = \NN(Q,\qqq) = \left\{-\sum \lambda_{ij} \GG_{ij}(\qqq),\ \lambda_{ij} \geq 0,\ 
D_{ij}(\qqq) > 0 \Longrightarrow \lambda_{ij} = 0
\right\} .$$
\end{prpstn}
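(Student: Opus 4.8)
The plan is to show that both the negative polar cone $\NNN_\qqq$ and the proximal normal cone $\NN(Q,\qqq)$ coincide with the explicit finitely generated cone
$$ \mathcal{K} := \left\{ -\sum \lambda_{ij}\,\GG_{ij}(\qqq) \virg \lambda_{ij}\geq 0,\ D_{ij}(\qqq)>0 \Rightarrow \lambda_{ij}=0 \right\}, $$
each identification resting on one of the two cited results. The equality $\NN(Q,\qqq)=\mathcal{K}$ is immediate from Proposition~\ref{coneprox}, whereas $\NNN_\qqq=\mathcal{K}$ is precisely the content of Farkas' Lemma applied to the polyhedral cone $\CCC_\qqq$. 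Combining the two yields the asserted chain $\NNN_\qqq=\NN(Q,\qqq)=\mathcal{K}$.

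For the proximal-normal side, I would first note that the present setting is the special case of the general framework with constraint functions $g_{ij}=D_{ij}$: each $D_{ij}(\qqq)=|\qq_i-\qq_j|-2r$ is convex in $\qqq$ (the Euclidean norm composed with a linear map, shifted by a constant), so that $Q$ in~(\ref{eq:Q}) has exactly the form~(\ref{def:Q}). The active set at $\qqq$ is $\{(i,j):D_{ij}(\qqq)=0\}$, and on it $|\qq_i-\qq_j|=2r>0$, so each $D_{ij}$ is smooth near the relevant points with $\nabla_\qqq D_{ij}(\qqq)=\GG_{ij}(\qqq)$ and $|\GG_{ij}(\qqq)|=\sqrt{2}$; hence the regularity behind Proposition~\ref{coneprox} holds. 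Proposition~\ref{coneprox} then gives directly $\NN(Q,\qqq)=-\sum_{D_{ij}(\qqq)=0}\R^+\GG_{ij}(\qqq)=\mathcal{K}$.

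For the polar side, I would observe that $\CCC_\qqq=\{\vv:\langle\GG_{ij}(\qqq),\vv\rangle\geq 0 \text{ for all active }(i,j)\}$ is a polyhedral cone cut out by finitely many homogeneous linear inequalities. The inclusion $\mathcal{K}\subset\CCC_\qqq^\circ=\NNN_\qqq$ is elementary: if $\ww=-\sum\lambda_{ij}\GG_{ij}(\qqq)$ with $\lambda_{ij}\geq 0$ supported on active pairs, then for any $\vv\in\CCC_\qqq$ one has $\langle\ww,\vv\rangle=-\sum\lambda_{ij}\langle\GG_{ij}(\qqq),\vv\rangle\leq 0$. The reverse inclusion $\NNN_\qqq\subset\mathcal{K}$ is exactly where Farkas' Lemma enters, and is the only genuinely nontrivial step: given $\ww\in\CCC_\qqq^\circ$, the fact that $\langle-\ww,\vv\rangle\geq 0$ for every $\vv$ satisfying $\langle\GG_{ij}(\qqq),\vv\rangle\geq 0$ on active pairs forces, by the Farkas alternative, a representation $-\ww=\sum\lambda_{ij}\GG_{ij}(\qqq)$ with $\lambda_{ij}\geq 0$; indeed, were no such representation to exist, Farkas would produce some $\vv\in\CCC_\qqq$ with $\langle\ww,\vv\rangle>0$, contradicting $\ww\in\CCC_\qqq^\circ$.

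I expect no serious obstacle here: the statement is essentially a dictionary between the polar-cone description of $\CCC_\qqq$ and the generator description of $\NN(Q,\qqq)$. The only points requiring a little care are the bookkeeping that the crowd-motion constraints fit the hypotheses of Proposition~\ref{coneprox} (in particular that one stays away from the singularity $\qq_i=\qq_j$, which holds on the active set since there $|\qq_i-\qq_j|=2r>0$) and the correct handling of signs when passing between the polar cone and the Farkas representation.
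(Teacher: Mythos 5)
Your proposal is correct and takes essentially the same route as the paper: the paper deduces this proposition in one line from Farkas' Lemma (identifying the polar cone $\NNN_\qqq = \CCC_\qqq^\circ$ with the finitely generated cone) combined with Proposition~\ref{coneprox} (identifying $\NN(Q,\qqq)$ with that same cone), which is precisely your two-step argument. The only difference is presentational: you verify inside the proof that the constraints $D_{ij}$ are convex, smooth near active contacts, and have bounded gradients so that Proposition~\ref{coneprox} applies, whereas the paper carries out this same verification immediately after stating the proposition.
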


\mb Using the classical orthogonal decomposition with two mutually polar cones (see \cite{Moreaucones}), the main equation (\ref{eq:model1}) becomes
\begin{equation}
\frac{d\qqq}{dt} + \P_{\NN(Q,\qqq)}\left(\UU(\qqq)\right) = \UU(\qqq).
 \label{eq:model2}
\end{equation}
According to Proposition 3.3 in~\cite{Fred}, we know that for a Lipschitz map $\UU$, this differential equation is equivalent to the following differential inclusion:
\begin{equation}
\label{eq:model3} 
\frac{d\qqq}{dt} + \NN(Q,\qqq) \ni \UU(\qqq).
\end{equation}

\noi For all $(i,j)$, $D_{ij} $ is a convex function and belongs to $C^2(U_{ij})$, with  $$ U_{ij}:=\{ \qqq \in \R^{2N} \virg |\qq_i-\qq_j |-r>0\} $$
and satisfies Assumption (\ref{Ui}) with $c=r/\sqrt{2}$. Moreover it is obvious that $$ \forall \qqq \in U_{ij} \virg |\GG_{ij}(\qqq)|=\sqrt{2} \quad \textmd{     and     } \quad |\textrm{D}^2D_{ij}(\qqq)| \leq \frac{2}{r}   .$$
As a consequence, Assumptions~(\ref{gradg}) and (\ref{hessg}) are satisfied.
It remains to check the inverse triangle inequality (Assumption~(\ref{inegtrianginverse})), which is the aim of the following proposition.

\begin{prpstn}[Inverse triangle inequality] \mbox{}
\newline There exists $\gamma >1  $ such that for all $\qqq \in Q$,
$$\sum_{(i,j) \in I(\qqq)} \alpha_{ij}|\GG_{ij}(\qqq)| 
\leq \gamma \left|\sum_{(i,j) \in I(\qqq)} \alpha_{ij}\GG_{ij}(\qqq)
\right|, $$ where $$ I(\qqq)=\{(i,j), \ i<j,\  D_{ij}(\qqq)=0 \} 
\textmd{ and } \alpha_{ij} \textmd{ are nonnegative reals}. $$
Constant $\gamma $ can be fixed as follows $$\gamma = \dsp 3 \sqrt{2} N \left(\frac{3}{\sin \left( \frac{2\pi}{N}\right)} \right)^N.$$
\label{propinegtrianginv}
\end{prpstn}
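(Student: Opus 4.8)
The plan is to \emph{dualize} the inequality. Fix $\qqq\in Q$ and abbreviate $\ww:=\sum_{(i,j)\in I(\qqq)}\alpha_{ij}\GG_{ij}(\qqq)$. By Cauchy--Schwarz, if I can produce a single \emph{separating velocity field} $\uu=(\uu_1,\dots,\uu_N)\in\R^{2N}$ with
$$\ll \GG_{ij}(\qqq),\uu\rr\geq\delta>0\qquad\text{for every contact }(i,j)\in I(\qqq),$$
then $|\ww|\geq\ll\ww,\uu\rr/|\uu|\geq \delta\big(\sum\alpha_{ij}\big)/|\uu|$, and since $|\GG_{ij}|=\sqrt2$ this is exactly the desired bound with $\gamma=\sqrt2\,|\uu|/\delta$. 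Because $\ll\GG_{ij},\uu\rr=\ll\ee_{ij}(\qqq),\uu_j-\uu_i\rr$, the requirement is purely geometric: the relative velocity of any two touching disks must push them apart at rate at least $\delta$. The naive \emph{dilation} field $\uu_i=\qq_i$ already separates every contact, since then $\ll\GG_{ij},\uu\rr=|\qq_i-\qq_j|=2r$; the entire difficulty is that its norm $|\qqq|$ is \emph{unbounded} on $Q$, so one must replace it by a field whose norm is controlled \emph{independently of the configuration}.

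I would build such a field by induction on the number $N$ of disks, peeling off one disk at a time. The geometric engine is the convex hull of the centers $\qq_1,\dots,\qq_N$: at any hull vertex all the other centers lie in a supporting half-plane, and in fact inside the interior angular sector spanned by the two incident hull edges. I select the hull vertex $k_0$ whose \emph{exterior} angle is largest; since the exterior angles of a convex polygon sum to $2\pi$ and there are at most $N$ of them, this exterior angle is $\geq 2\pi/N$, so the interior sector at $k_0$ has half-width at most $\tfrac{\pi}{2}-\tfrac{\pi}{N}$. Taking $\tau$ to be the \emph{outward} bisector of that exterior angle, every contact direction $\ee_{k_0 j}(\qqq)$ (which points toward a neighbour, hence into the interior sector) then satisfies a uniform estimate of the form
$$\ll \ee_{k_0 j}(\qqq),\,\tau\rr\ \leq\ -\,c_0\,\sin\!\left(\tfrac{2\pi}{N}\right),$$
with $c_0$ an absolute constant; this is the configuration-independent separation we need.

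Next I would glue this to the field $\uu'$ furnished by the inductive hypothesis for the remaining $N-1$ disks: set $\uu_i=\uu'_i$ for $i\neq k_0$, so that all contacts among those disks stay separated by the inductive amount $\delta'$, and set $\uu_{k_0}=\lambda\tau$, pushing $k_0$ outward. For a contact $(k_0,j)$ one then has $\ll\GG,\uu\rr=\ll\ee_{k_0 j},\uu_j\rr-\lambda\ll\ee_{k_0 j},\tau\rr$, where the first term is bounded by $|\uu'|$ and the second is $\geq \lambda c_0\sin(2\pi/N)$; choosing $\lambda$ just large enough to dominate forces $\lambda$ to be of order $(\delta'+|\uu'|)/\sin(2\pi/N)$. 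Hence the controlling ratio $|\uu|/\delta$ is multiplied by a factor of order $1/\sin(2\pi/N)$ at each of the $N$ reduction steps. Tracking these factors, together with the $\sqrt2$ coming from $|\GG_{ij}|=\sqrt2$ and an $O(N)$ loss in the norm bookkeeping, reproduces a bound of the announced shape $\gamma=3\sqrt2\,N\,\big(3/\sin(2\pi/N)\big)^N$.

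The crux --- and the only genuinely delicate point --- is \emph{uniformity in} $\qqq$: I must guarantee a lower bound on the achievable separation at the extremal disk that depends on $N$ alone and not on the positions, which is precisely what the largest-exterior-angle selection and the $\sin(2\pi/N)$ estimate supply. Two subordinate verifications are routine but must be carried out: first, that the neighbours of $k_0$ really sit inside the interior sector with the stated margin (here one uses that two disks both touching $k_0$ subtend an angle $\geq \pi/3$ at its centre, so they are finite in number and cannot accumulate on the supporting line); and second, that the per-step amplification is monotone in $N$, so that the product over the induction is a legitimate upper bound. Everything else is the elementary algebra of propagating $\delta$ and $|\uu|$ through the recursion.
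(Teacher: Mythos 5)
Your proposal is sound in its core strategy, but it takes a genuinely different route from the paper's. The paper argues \emph{primally}: it first proves Proposition~\ref{lambdaq}, a uniform bound $\lambda_{ij}\le |\FF|\,a^N$ on all nonnegative multipliers solving $\sum\lambda_{ij}\GG_{ij}(\qqq)=\FF$, via an elimination algorithm that repeatedly selects a convex-hull vertex whose exterior angle is at least $2\pi/N$, solves the local two- or three-neighbour system explicitly (including the kernel analysis of Lemma~\ref{resolsyst} needed to restore nonnegativity in the three-neighbour case), and propagates the force norms; the proposition then follows by summing over the at most $3N$ contacts. You argue \emph{dually}: you construct a single separating field $\uu$ with $\ll\GG_{ij}(\qqq),\uu\rr\ge\delta$ at every contact and $|\uu|/\delta$ depending only on $N$, and conclude by Cauchy--Schwarz. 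It is worth noting that this inverts the paper's own logic: there, the existence of such a direction (Lemma~\ref{gooddir}) is \emph{deduced} from the inverse triangle inequality, whereas you produce it directly from the hull geometry and obtain the inequality as a corollary. Both proofs share the same geometric engine (peel off an extremal disk of the hull, exploiting the $2\pi/N$ angle bound and monotonicity of the per-step factor), but yours avoids the case distinction on the number of neighbours, avoids solving and analyzing linear systems, and makes transparent why nonnegativity of the $\alpha_{ij}$ is indispensable: a dual certificate only controls one side of the cone.

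The one genuine caveat is quantitative. Your separation at the extremal disk comes from the bisector of the exterior angle: in the worst case the interior cone has half-width $\pi/2-\pi/N$, so $\ll\ee_{k_0 j}(\qqq),\tau\rr\le-\sin(\pi/N)$, giving a per-step amplification of order $1/\sin(\pi/N)$, whereas the paper's explicit local solves yield $1/\sin(2\pi/N)=1/\bigl(2\sin(\pi/N)\cos(\pi/N)\bigr)$ per step. Compounded over the $N$ reduction steps, your recursion proves the proposition with a constant of the announced \emph{shape}, but larger than the stated one by a factor growing roughly like $2^N$; as written it does not deliver the precise value $\gamma=3\sqrt2\,N\left(3/\sin\left(2\pi/N\right)\right)^N$, which the paper uses afterwards in the prox-regularity constant $\eta$. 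This is harmless for every qualitative conclusion (prox-regularity, well-posedness, convergence of the scheme all need only some $N$-dependent $\gamma$), but if you want the constant as stated you must sharpen the per-step estimate beyond the bisector choice. The verifications you defer (degenerate hulls, disks without contacts, monotonicity in $N$ of the per-step factor) are indeed routine.
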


\noi The next subsection is devoted to its proof.

\begin{rmrk}
 Note the sign of coefficients $\alpha_{ij}$. From a general point of view, this inequality is obviously wrong if  these coefficients are just assumed real. Indeed, for $N \geq 6 $, the cardinal of the set $I(\qqq)$ is strictly larger than  $2N$, which involves a relation between vectors $\GG_{ij}(\qqq)$.  
\end{rmrk}

\begin{rmrk}
In~\cite{M2AN}, we have already proved such a result. The proof we propose here gives a smaller constant $\gamma$ and above all, it allows to better understand how the non-uniqueness of the Kuhn-Tucker multipliers appears.    
\end{rmrk}

\noi Proposition~\ref{Qprox}, Theorems~\ref{theo:wp2} and \ref{theo:qhq} imply the next results.

 \begin{prpstn} The set $Q\subset \R^{2N}$, defined by (\ref{eq:Q}) is $\eta$-prox-regular with a constant $$\eta= \frac{r}{6N} \left(\frac{\sin\left(\frac{2\pi}{N}\right)}{3} \right)^N.$$
  \end{prpstn}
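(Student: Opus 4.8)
The plan is to recognize this statement as a direct specialization of Proposition~\ref{Qprox} to the crowd-motion setting, so that the work reduces to identifying the three structural constants $\alpha$, $M$ and $\gamma$ and substituting them into the formula $\eta = \alpha/(M\gamma)$. Although Proposition~\ref{Qprox} is phrased for a time-dependent map $Q(t)$, the present $Q$ is simply the time-independent case, and the prox-regularity argument applies verbatim at the fixed configuration set.

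First I would collect the data already gathered for the constraints $D_{ij}$. The discussion preceding Proposition~\ref{propinegtrianginv} verifies that each $D_{ij}$ is convex, lies in $C^2(U_{ij})$, and satisfies Assumption~(\ref{Ui}) with $c = r/\sqrt{2}$; moreover $|\GG_{ij}(\qqq)| = \sqrt{2}$ and $|\mathrm{D}^2 D_{ij}(\qqq)| \le 2/r$ on $U_{ij}$. Hence Assumptions~(\ref{gradg}) and (\ref{hessg}) hold with the sharp values $\alpha = \beta = \sqrt{2}$ and $M = 2/r$. By Proposition~\ref{etaij}, each constraint set $\{\qqq : D_{ij}(\qqq) \ge 0\}$ is therefore $\eta_0$-prox-regular with $\eta_0 = \alpha/M = r/\sqrt{2}$.

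Next I would invoke the inverse triangle inequality: Proposition~\ref{propinegtrianginv} establishes Assumption~(\ref{inegtrianginverse}) for $Q$ with the explicit constant $\gamma = 3\sqrt{2}\,N\,(3/\sin(2\pi/N))^{N}$. All hypotheses of Proposition~\ref{Qprox} are now in place, so $Q$ is $\eta$-prox-regular with $\eta = \eta_0/\gamma$. Substituting the values yields
\begin{equation*}
\eta = \frac{\eta_0}{\gamma} = \frac{r/\sqrt{2}}{3\sqrt{2}\,N\left(\frac{3}{\sin(2\pi/N)}\right)^{N}} = \frac{r}{6N}\left(\frac{\sin(2\pi/N)}{3}\right)^{N},
\end{equation*}
which is exactly the claimed constant.

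Since every ingredient is already available from the earlier results, there is no genuine analytic obstacle here; the statement is essentially a corollary. The only point requiring care is the bookkeeping of the constants — in particular noticing that $|\GG_{ij}|$ equals $\sqrt{2}$ exactly, so that $\alpha$ may be taken equal to $\beta$, and that the two factors of $\sqrt{2}$ appearing in $\eta_0$ and in $\gamma$ cancel to leave the clean prefactor $1/(6N)$.
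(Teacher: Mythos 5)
Your proposal is correct and coincides with the paper's own (implicit) argument: the paper likewise obtains this proposition by feeding the constants $\alpha=\beta=\sqrt{2}$, $M=2/r$ (verified just before Proposition~\ref{propinegtrianginv}) and $\gamma=3\sqrt{2}\,N\,(3/\sin(2\pi/N))^{N}$ (Proposition~\ref{propinegtrianginv}) into Proposition~\ref{Qprox}, giving $\eta=\alpha/(M\gamma)$. Your constant bookkeeping, including the cancellation of the two factors of $\sqrt{2}$, is exactly right.
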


\begin{thrm}
 Assume that $\UU$ is Lipschitz. The Cauchy problem associated to (\ref{eq:model2}) is well-posed in the set of the absolutely continuous functions and the related numerical scheme (\ref{schema}) is convergent.
\end{thrm}

\noi Other numerical simulations based on this scheme, we refer the reader to~\cite{esaim, MauryVenelTGF, justrat, M2AN}.
%


\subsubsection{Proof of the inverse triangle inequality}
In order to prove Proposition~\ref{propinegtrianginv}, we are firstly going to show that the terms $ \lambda_{ij} $ appearing in the following equation : $$ \sum_{i<j} \lambda_{ij}
\GG_{ij}(\qqq)= \FF$$ are bounded.

\begin{prpstn}
For all $ \qqq \in Q$, for all $\FF \in \R^{2N} $, 
 the following set
$$\Lambda_{\qqq,F}:=\left\{\bflambda \in  \R^{\frac{N(N-1)}{2}}, \sum_{i<j} \lambda_{ij}
  \GG_{ij}(\qqq) =\FF ,\ \lambda_{ij}\geq 0, \lambda_{ij}=0 \textmd{ if }
 D_{ij}(\qqq)>0 \right\} $$ is uniformly bounded with respect to $\qqq$. More precisely, $$  \dsp \forall \bflambda \in \Lambda_{\qqq, F}, \ 
\forall i<j, \  \lambda_{ij} \leq |\FF|\ a^N  
\textmd{ with } a = \frac{3}{\sin(\frac{2\pi}{N})}. $$
\label{lambdaq}
\end{prpstn}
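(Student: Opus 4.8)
The plan is to read the constraint $\bflambda\in\Lambda_{\qqq,\FF}$ as a blockwise mechanical equilibrium. Writing $\FF=(\FF_1,\dots,\FF_N)$ with $\FF_i\in\R^2$ and setting $\uu_{ij}:=(\qq_i-\qq_j)/|\qq_i-\qq_j|$ (a unit vector, with $\uu_{ij}=-\uu_{ji}$), the equation $\sum_{i<j}\lambda_{ij}\GG_{ij}(\qqq)=\FF$ is equivalent, block by block, to $\FF_i=\sum_{j}\lambda_{ij}\,\uu_{ij}$, the sum running over the contacts of disk $i$, with $\lambda_{ij}=\lambda_{ji}\ge 0$ and $\lambda_{ij}=0$ unless $D_{ij}(\qqq)=0$. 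Thus each $\FF_i$ is a nonnegative combination of the unit ``push'' directions coming from the disks touching $i$. Qualitatively the boundedness of $\Lambda_{\qqq,\FF}$ is just the positive linear independence of the active gradients (MFCQ), but the point is to extract the explicit, $\qqq$-uniform estimate $\lambda_{ij}\le|\FF|\,a^N$.

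First I would record the two geometric facts supplied by non-overlap. If $i$ touches both $j$ and $k$, then $|\qq_i-\qq_j|=|\qq_i-\qq_k|=2r$ while $|\qq_j-\qq_k|\ge 2r$; the law of cosines in the triangle $\qq_i\qq_j\qq_k$ forces the angle between $\uu_{ij}$ and $\uu_{ik}$ to be at least $\pi/3$. Hence contact directions around any disk are pairwise $\ge\pi/3$ apart, so each disk has at most $6$ contacts and the active graph has at most $3N$ edges (this is precisely the count that will later turn Proposition~\ref{lambdaq} into Proposition~\ref{propinegtrianginv}, using $|\GG_{ij}|=\sqrt2$). Second, for any unit vector $\nn$, the disk $i^\star$ maximizing $\qq_i\cdot\nn$ satisfies $\uu_{i^\star j}\cdot\nn\ge 0$ for every contact $j$: all forces felt by an extremal disk point into the closed half-space $\{\,\cdot\,\nn\ge0\}$.

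The core of the argument is the quantitative propagation producing the factor $a=3/\sin(2\pi/N)$. I would track the largest multiplier $\lambda^\star=\lambda_{i_0 j_0}$ and follow the force through the packing: at disk $i_0$ the identity $\FF_{i_0}=\lambda^\star\uu_{i_0 j_0}+\sum_{\text{other}}\lambda_{i_0k}\uu_{i_0k}$, together with the boundedness of $|\FF_{i_0}|$ and the nonnegativity of the $\lambda$'s, forces a nearly antipodal contact carrying a comparably large multiplier (a disk transmitting a large force must be ``squeezed'' between two almost opposite neighbors). Iterating, one obtains a chain of squeezed disks along which the force direction can turn only slowly; since only $N$ disks are available the chain must either reach a disk whose balance is controlled directly by $\FF$ or close into a cycle. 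The amplification incurred at each disk is governed by how far two contact directions may deviate from being exactly opposite, and the extremal case is the regular ring whose centers form a regular $N$-gon of edge $2r$: there the two contacts at a disk subtend an angle $\pi-2\pi/N$, so resolving a bounded radial external force against them costs a factor comparable to $1/\sin(2\pi/N)$. Compounding over the at most $N$ disks of the chain yields $\lambda_{ij}\le|\FF|\,(3/\sin(2\pi/N))^{N}$, the constant $3$ absorbing the finitely many $O(1)$ losses (six contacts per disk, passing from $\FF_i$ to $|\FF|$, and the slack in the ``nearly opposite'' estimate).

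The main obstacle is exactly this propagation step: unlike a single extremal disk, a generic disk has its contact forces spread all around it, and a bounded external force may be the small difference of two very large internal forces, so no purely local projection can bound the multipliers. One has to quantify how much of a large incoming force ``survives'' after being balanced at a disk, show that the surviving fraction degrades at a controlled rate with worst rate realized by the regular ring, make this uniform in $\qqq$, and verify that the chain cannot sustain undamped amplification without exhausting the $N$ disks. That is where the real work lies, and it is what pins down both the exponent $N$ and the $\sin(2\pi/N)$.
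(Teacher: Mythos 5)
Your setup coincides with the paper's starting point: the blockwise reading $\FF_i=\sum_j\lambda_{ij}\uu_{ij}$, the $\pi/3$ separation of contact directions, the bound of $6$ contacts per disk, and the fact that $|I(\qqq)|\leq 3N$ are all exactly what the paper uses (and you even name the right constant $a=3/\sin(2\pi/N)$). But what you present as ``the core of the argument'' --- tracking the largest multiplier $\lambda^\star$, finding a ``nearly antipodal'' contact carrying a comparably large multiplier, and compounding along a chain of squeezed disks --- is only a program, not a proof; you say yourself that quantifying the surviving fraction, handling cycles, and making it uniform in $\qqq$ ``is where the real work lies.'' That work is the entire content of the proposition, so as it stands there is a genuine gap. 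Three specific points would break your route as described. First, your extremal-disk fact (all contact forces in a closed half-space) is too weak: the local system $\lambda_1\uu_1+\lambda_2\uu_2=\FF_i$ degenerates as the angle between contact directions tends to $\pi$, and a half-space still allows exactly antipodal contacts; no bound follows. Second, at a disk with three contacts the local system is underdetermined (3 unknowns, 2 equations), so ``the'' multiplier transmitted onward is not even well defined without an additional argument. Third, if your chain closes into a cycle you have no contradiction: ruling out large self-balanced nonnegative force patterns around a cycle is essentially equivalent to the proposition itself, so this step is circular as stated.

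The paper's proof resolves precisely these three difficulties by eliminating disks from the boundary of the cluster inward, in a geometrically controlled order, rather than propagating from the largest multiplier. Single contacts are removed first (trivial $1\times1$ systems). Otherwise one takes a vertex $\qq_i$ of the convex hull of the cluster with \emph{minimal interior angle}, which is at most $\pi-2\pi/p\leq\pi-2\pi/N$: this simultaneously (i) confines the contact directions at $\qq_i$ to a cone of opening bounded away from $\pi$ by $2\pi/N$ --- this, combined with the $\pi/3$ lower bound, is exactly where $\sin(2\pi/N)$ enters --- and (ii) limits $\qq_i$ to $2$ or $3$ neighbours. The $2$-neighbour case is an explicit well-conditioned $2\times2$ solve; the $3$-neighbour case is handled by writing all nonnegative solutions as a particular solution plus a multiple of the kernel vector (Lemma~\ref{resolsyst}) and using nonnegativity to bound the free parameter by $t_\textmd{max}$. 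Each elimination multiplies the force norms by at most $3/\sin(2\pi/N)$ (Lemma~\ref{bilan}), and there are at most $N-1$ eliminations, which yields $a^N$. If you want to salvage your propagation picture, the missing ingredient you would have to prove is precisely this quantitative well-conditioning at a suitably chosen \emph{extremal} disk; for generic interior disks no such local bound holds, which is why the convex-hull ordering, and not the size of the multipliers, has to drive the induction.
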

\noindent First we check that Proposition~\ref{lambdaq} implies Proposition~\ref{propinegtrianginv}.
Indeed, by Proposition~\ref{lambdaq}, we have for all $(k,l) \in I(\qqq)$,
$$  \alpha_{kl} \leq a^N  \ \left| \sum_{(i,j) \in I(\qqq)} \alpha_{ij}\GG_{ij}(\qqq) \right|, $$
with $$ a= \dsp  \frac{3}{\sin \left( \frac{2\pi}{N}\right)}  .$$
Since $|\GG_{kl}(\qqq)|=\sqrt{2} $, by summing coefficients $\alpha_{kl}  $ for all $(k,l) \in I(\qqq)$ , we obtain
$$ \sum_{(k,l) \in I(\qqq)} \alpha_{kl} |\GG_{kl}(\qqq)| \leq \sqrt{2} \  |I(\qqq)| \  a^N \  \left| \sum_{(i,j) \in I(\qqq)} \alpha_{ij}\GG_{ij}(\qqq) \right| .$$ 
In the monodisperse case, each disk has at most 6 neighbours. As a consequence, $ |I(\qqq)| \leq 3N ,$ and thus, $$ \sum_{(k,l) \in I(\qqq)} \alpha_{kl} |\GG_{kl}(\qqq)| \leq 3\sqrt{2}N \  a^N \  \left| \sum_{(i,j) \in I(\qqq)} \alpha_{ij}\GG_{ij}(\qqq) \right| . $$ 
It now remains to prove Proposition~\ref{lambdaq}.

\begin{proof}[Proof of Proposition~\ref{lambdaq}]
Suppose that set $\Lambda_{\qqq,F}$ is not empty, we want to estimate the solutions $\bflambda$ of the following system containing $2N$ equations:  
  \begin{equation}
\tag{$P$}
\sum_{\footnotesize{
\begin{array}{c}
i<j \\
D_{ij}(\qqq)=0 \\
\end{array}}
} \lambda_{ij} \GG_{ij}(\qqq) = \FF \virg \lambda_{ij}\geq 0. 
 \end{equation}
 By specifying the expression of vectors $\GG_{ij}(\qqq) $, we can write the system concerning individual $i_0$, 
\begin{equation}
\tag{$P_{i_0}$} 
\sum_{\footnotesize{
\begin{array}{c}
j=1 \\
j \neq i_0 \\
j \textmd{ neighbour of }i_0
\end{array}}
}^n \overline{\lambda_{ji_0}} e_{ji_0}= F_{i_0},
 \end{equation}
 where
 $$ \dsp \overline{\lambda_{ji_0}} = \left \{
\begin{array}{l}
\lambda_{ji_0} \textmd{ if } j<i_0 \\
\lambda_{i_0j} \textmd{ if } j>i_0 
\end{array}
\right. \textmd{ and } e_{ji_0}= \ee_{ji_0}(\qqq) .$$
We assume here that the configuration $\qqq \in Q$ is a cluster in the sense that the set $\bigcup_{i=1}^{N} \overline{B(\qq_i,r)}$ is connected by arcs. Otherwise, for the other configurations, it suffices to deal with the different clusters one by one, as the associated problems are independent from each other.  
 We denote by $A=\{\qq_1,.., \qq_N \}$ the set of all positions. We are going to solve the problems ($P_{i}$) one by one. After having solved a problem ($P_{i_0}$), $\qq_{i_0}$ is removed from $A$ and problem ($P_{i_0}$) is deleted from the whole problem $(P)$, and the values of  $\overline{\lambda_{ji_0}}$ are taken into account in terms $F_j $, for each person $j$ neighbour of individual $i_0 $.
 The goal is to reduce $A$ to a singleton. \\ 
The order of this algorithm is important. Less terms there are in problem $(P_{i_0})$, more easily solvable it is. First and foremost, we deal with people having only one neighbour (Case 1) because in that case the solution is trivial. When there is no single contact, we consider people with more neighbours. Since we want to have an upper bound for the terms $ \overline{\lambda_{ji_0}}$, we will take care of controlling the angles between vectors $e_{i_0j_1} $ and $e_{i_0j_2} $ if individuals $ j_1$ and $j_2$ are in contact with person $i_0$ (see Case 2). 
\bigskip \\
{\bf Case 1: elimination of single contacts}\\
 Suppose that there exists $\qq_i \in  A$ such that person $i $ has only one neighbour $j$.
 Problem $(P_i)$ becomes 
$$\overline{\lambda_{ji}} e_{ji}= F_i. $$ Consequently, $\overline{\lambda_{ji}}= |F_i|$. We remove $\qq_i $ from $A$ and $(P_i)$ from $(P)$. Then, we substitute $F_j $ by $F_j-\overline{\lambda_{ji}} e_{ij} $, which implies that the term $|F_j|$ is replaced with $|F_j|+ |F_i| $. 
In this way, all single contacts are taken off, which allows us for example to completely deal with the case illustrated in Figure~\ref{fig:chaine}. 
\begin{figure}
\centering
\begin{tabular}{c}
\includegraphics[width=0.25\textwidth]{./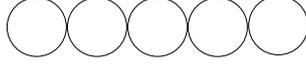}
\end{tabular}
\caption{Case of a chain.}
\label{fig:chaine}
\end{figure}
If $A $ is not reduced to a singleton after these eliminations, we consider the following case.
\bigskip \\
{\bf Case 2: there are no single contacts}\\
We define $C$ the convex hull of $ A$. The set of its extremal points is denoted by $ E$ and the cardinal of $ E$ by $p$. We denote by $P$ the boundary of $C$, which is a convex polygon whose vertices are the points of $E$. This polygon has $p \geq 3$ corners at least, otherwise we would rather consider Case 1. The angle sum equals to $\pi (p-2) $. Therefore there exists an angle of $P$ lower than $\pi (1-\frac{2}{p})< \pi$. Suppose that $\qq_i $ is the corresponding vertex. Because of the angle condition, individual $i$ has two or three neighbours.
Indeed, if individuals $j $ and $k$ are in contact with person $i$, the minimum angle between vectors $e_{ij}$
and $e_{ik}$ is equal to $\frac{\pi}{3} $.     
\bigskip \\
{\bf Case 2 (a)}: individual $i $ has 2 neighbours \\
Using a rotation around $\qq_i $ if needed, we can assume that the concerned configuration is represented in Figure~\ref{fig:cas1}. In view of the choice of $\qq_i$, $ \theta \geq
\frac{2\pi}{p} \geq \frac{2\pi}{N} $ and $ \theta \leq  \frac{2\pi}{3}$. Note that 
$N\geq p\geq 3$.  
\begin{figure}
\centering
\begin{tabular}{c}
\psfrag{a}[l]{$\qq_i$}
\psfrag{b}[l]{$\qq_j $}
\psfrag{c}[l]{$\qq_k$}
\psfrag{d}[l]{$\theta$}
\includegraphics[width=0.25\textwidth]{./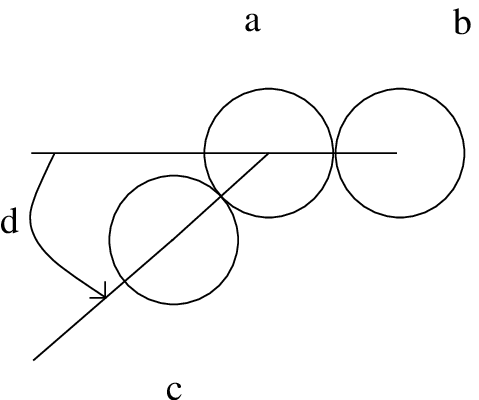}
\end{tabular}
\caption{Case 1.}
\label{fig:cas1}
\end{figure}
By writing the expression of $e_{ji}$ and $e_{ki}$, we can solve Problem $(P_i)$ after a simple computation:
$$
\left (
\begin{array}{c}
\overline{\lambda_{ji}} \vsp\\ \overline{\lambda_{ki}} 
\end{array}  \right )= \frac{1}{\sin \theta} 
\left (
\begin{array}{cc}
-\sin \theta &  \cos \theta \vsp\\
0 & 1
\end{array}  \right ) 
\left (
\begin{array}{l}
F_i^x \vsp \\ F_i^y 
\end{array}  \right ).
$$
Hence $$\overline{\lambda_{ji}} \leq \frac{\sqrt{2}}{\sin \theta}
|F_i| \textmd{ and }\overline{\lambda_{ki}} \leq \frac{1}{\sin \theta}
|F_i|.$$
 We remove $\qq_i $ from $A $ and we replace $|F_ j|$ (respectively $|F_ k| $) with 
 $ \dsp |F_j|+ \sqrt{2} |F_i|/\sin (2\pi/N)$ (respectively $ \dsp |F_k|+ 
|F_i|/ \sin(2\pi/N)$) because $\sin \theta \geq \sin(\frac{2\pi}{N}) $.
Problem $(P_i) $ is also eliminated.
\bigskip \\
{\bf Case 2 (b)}: individual $i $ has 3 neighbours \\
Using a rotation around $\qq_i $ if needed, we can assume that the concerned configuration is represented in Figure~\ref{fig:cas2}.
Because of the non-overlapping constraint, angles $\theta  $ and $\phi$ satisfy the following double inequalities 
\begin{equation}
 \frac{2\pi}{N} \leq \theta \leq \frac{\pi}{3}.
\label{eq:gamma}
\end{equation}
and
\begin{equation}
 \frac{2\pi}{N}- \frac{2\pi}{3} \leq \phi \leq -\frac{\pi}{3}.
\label{eq:beta}
\end{equation}
Note that $N \geq p \geq 6$. In fact, for $N<7$ persons, we can see that the cluster $A$ can be reduced to a singleton only by considering Case 1.   
\begin{center}
\begin{figure}
\centering
\begin{tabular}{c}
\psfrag{a}[l]{$\qq_i$}
\psfrag{b}[l]{$\qq_j $}
\psfrag{c}[l]{$\qq_l$}
\psfrag{e}[l]{$\qq_k$}
\psfrag{d}[l]{$\theta$}
\psfrag{f}[l]{$\phi$}
\includegraphics[width=0.25\textwidth]{./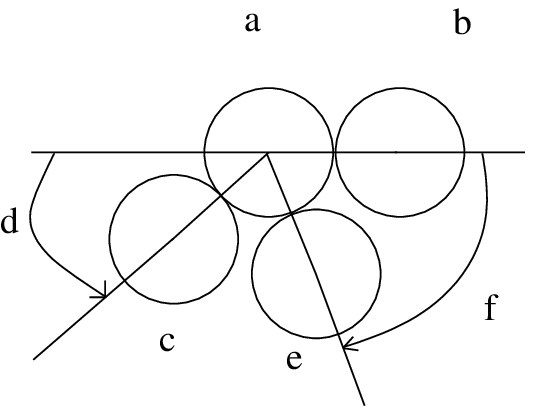}
\end{tabular}
\caption{Case 2.}
\label{fig:cas2}
\end{figure}
\end{center}
In this case, the terms $\overline{\lambda_{ji}}$, $\overline{\lambda_{ki}}$ and $\overline{\lambda_{li}}$ are not unique (3 unknowns and 2 equations). We define a particular solution of Problem ($P_i$)
 $(\overline{\lambda_{ji}^0},0,\overline{\lambda_{li}^0} )$, where
\begin{equation} 
\left (
\begin{array}{c}
\overline{\lambda_{ji}^0} \vsp \\ \overline{\lambda_{li}^0} 
\end{array}  \right )= \frac{1}{\sin \theta} 
\left (
\begin{array}{cc}
-\sin \theta &  \cos \theta \vsp \\
0 & 1
\end{array}  \right ) 
\left (
\begin{array}{l}
F_i^x \vsp \\ F_i^y 
\end{array}  \right ).
\label{solpart}
\end{equation}
To obtain all solutions, it remains to describe the kernel of System 
$(P_i) $. We can easily show the next lemma. 
\begin{lmm}
The kernel of System $(P_i) $ is generated by: $$k_{\phi \theta}=\left (
\begin{array}{c}
\sin(\phi-\theta) \\ \sin\theta  \\ \sin\phi
\end{array}  \right ). $$ In addition, the signs of the coordinates can be specified   $$ \sin(\phi-\theta) \leq -\sin\left(\frac{2\pi}{N}\right)< 0,\ 
 \sin\theta \geq \sin\left(\frac{2\pi}{N}\right) >  0 \textmd{ and }  \sin\phi \leq
-\frac{\sqrt{3}}{2} < 0. $$ 
\label{resolsyst} 
\end{lmm}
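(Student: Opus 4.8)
The plan is to read System $(P_i)$, in the three-neighbour situation of Case 2(b), as a \emph{homogeneous} linear system of two scalar equations in the three unknowns $\overline{\lambda_{ji}},\overline{\lambda_{ki}},\overline{\lambda_{li}}$. Writing $A:=\big(e_{ji}\,|\,e_{ki}\,|\,e_{li}\big)$ for the $2\times3$ matrix whose columns are the three unit vectors, the kernel of $(P_i)$ is exactly $\ker A$. Since the $e_{\cdot i}$ are unit vectors of $\R^2$ and no two of them are collinear (the angle between two directions associated to neighbours of $\qq_i$ is at least $\pi/3$), the matrix $A$ has rank $2$, so $\ker A$ is one-dimensional; this is the dimension count ``3 unknowns, 2 equations'' announced before the statement.

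To exhibit a generator I would use the cofactor (cross-product) vector
$$k:=\big(\det(e_{ki},e_{li}),\,-\det(e_{ji},e_{li}),\,\det(e_{ji},e_{ki})\big).$$
That $Ak=0$ is immediate: each of the two scalar equations is the cofactor expansion, along the top row, of a $3\times3$ determinant with two equal rows (the matrix obtained by stacking the relevant row of $A$ on top of $A$), hence vanishes. It then remains to identify the three $2\times2$ minors. Reading the coordinates of $e_{ji},e_{ki},e_{li}$ off Figure~\ref{fig:cas2}, in the frame used there with $\theta$ and $\phi$ the oriented angles constrained by (\ref{eq:gamma}) and (\ref{eq:beta}), each minor is a determinant of two unit vectors and therefore equals the sine of the oriented angle between them; a direct substitution turns $k$ into $k_{\phi\theta}=(\sin(\phi-\theta),\sin\theta,\sin\phi)^{T}$, exactly as claimed. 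This is the analogue, with one extra column, of the $2\times2$ inversion already carried out in Case 2(a), so I expect it to be short.

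The remaining task is to pin down the signs, and this is the only step needing care. From (\ref{eq:gamma}) we have $\theta\in[2\pi/N,\pi/3]\subset(0,\pi/2)$, on which $\sin$ is increasing, giving $\sin\theta\geq\sin(2\pi/N)>0$. From (\ref{eq:beta}), $\phi\in[2\pi/N-2\pi/3,-\pi/3]\subset[-2\pi/3,-\pi/3]$, on which $\sin\leq-\sqrt3/2$ (the maximum being attained at $\phi=-\pi/3$), whence $\sin\phi\leq-\sqrt3/2<0$. For the first coordinate I would combine both bounds: $\phi-\theta$ ranges over $[\,2\pi/N-\pi,\,-\pi/3-2\pi/N\,]$, and this interval is contained in $[\,-\pi+2\pi/N,\,-2\pi/N\,]$, precisely the subinterval of $(-\pi,0)$ on which $\sin\psi\leq-\sin(2\pi/N)$; hence $\sin(\phi-\theta)\leq-\sin(2\pi/N)<0$. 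The main obstacle is thus not conceptual but bookkeeping: fixing the orientation of the angles consistently with the figure so that the three minors emerge with the stated signs, and checking the last interval containment, which is the one point where both geometric constraints (\ref{eq:gamma}) and (\ref{eq:beta}) are genuinely used together.
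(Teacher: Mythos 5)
Your proof is correct. The paper itself offers no argument for Lemma~\ref{resolsyst} (it is introduced only by ``We can easily show the next lemma''), so there is no written proof to diverge from: your route --- a rank-two dimension count, the cofactor vector $\bigl(\det(e_{ki},e_{li}),\,-\det(e_{ji},e_{li}),\,\det(e_{ji},e_{ki})\bigr)$ as an explicit kernel element, identification of each $2\times 2$ minor as the sine of an oriented angle, and the sign bounds deduced from (\ref{eq:gamma}) and (\ref{eq:beta}) by monotonicity of the sine --- is exactly the ``simple computation'' the author leaves to the reader, and you execute it correctly, including the one delicate step, the containment $\phi-\theta\in[-\pi+2\pi/N,\,-2\pi/N]$ on which $\sin\leq-\sin(2\pi/N)$. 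Two minor remarks: the strict bound $\sin\theta\geq\sin(2\pi/N)>0$ is also what guarantees your cofactor vector is nonzero, hence truly spans the one-dimensional kernel (worth saying explicitly); and since the final form of the generator depends on the angle conventions of Figure~\ref{fig:cas2}, a figure-independent check is available by verifying the identity $\sin(\phi-\theta)\,e_{ji}+\sin\theta\,e_{ki}+\sin\phi\,e_{li}=0$ directly with the angle-addition formula.
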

\noindent All the solutions of $(P_i) $ take the form $$
\left (
\begin{array}{c}
\overline{\lambda_{ji}} \vsp \\\overline{\lambda_{ki}} \vsp \\ \overline{\lambda_{li}}
\end{array}  \right )=
\left (
\begin{array}{c}
\overline{\lambda_{ji}^0} \vsp \\0  \vsp \\ \overline{\lambda_{li}^0}
\end{array}  \right )+t
\left (
\begin{array}{c}
\sin(\phi-\theta) \vsp \\ \sin\theta  \vsp\\ \sin\phi
\end{array}  \right )  \textmd{ where } t \in \R.
 $$
Since we are only looking for the nonnegative solutions, the sign of $\sin
\theta $ implies $t \geq 0 $. Furthermore, the signs of
$\sin(\phi-\theta)  $ and of $\sin\phi $ involve  $t\leq
t_\textmd{max} $  where
\begin{equation} 
\dsp t_\textmd{max} = 
\min\left (\frac{\overline{\lambda_{ji}^0}}{-\sin(\phi-\theta)}, 
     \frac{\overline{\lambda_{li}^0}}{-\sin\phi} \right ).
\label{tmax} 
\end{equation} 
Moreover, the following inequalities are satisfied:
$$\dsp 
\overline{\lambda_{ji}} \leq \frac{\sqrt{2}}{\sin \theta}
|F_i|, \quad
\overline{\lambda_{li}} \leq \frac{1}{\sin \theta}
|F_i| \quad \textmd{ and } \quad \overline{\lambda_{ki}} \leq t_\textmd{max}\sin\theta \leq \frac{2}{\sqrt{3}}
|F_i| .$$
So we remove $\qq_i$ from $A $ and we replace $|F_ j|$ (respectively $|F_ k| $ and $|F_l| $) with 
 $ \dsp |F_j|+\sqrt{2}|F_i|/\sin (2\pi/N)$ (respectively $ \dsp|F_k|+ 
2|F_i|/\sqrt{3}$ and $\dsp |F_l|+ |F_i|/\sin(2\pi/N) $) .
Problem $(P_i) $ is also eliminated.\\
Now we have removed one person from set $A$. If this set is not reduced to a singleton, we return to Case 1 or Case 2.
\bigskip \\
{\bf Conclusion}\\
By iterating this process, we solve Problem $(P)$ with at most $(N-1) $ steps.
We can conclude with the following lemma, whose the proof is a straightforward computation.
\begin{lmm}
At every step, the norm $|\FF|$ is replaced at worst with $ \dsp \frac{3}{\sin(\frac{2\pi}{N})} |\FF| $. 
\label{bilan}
\end{lmm}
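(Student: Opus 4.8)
The plan is to read Lemma~\ref{bilan} as the per-step contraction estimate that makes the elimination algorithm terminate with the bound $a^N$ of Proposition~\ref{lambdaq}, where $a=3/\sin(2\pi/N)$. The first thing I would do is pin down what ``the norm $|\FF|$'' should denote: the \emph{largest} of the scalar bounds currently carried by the surviving individuals, i.e.\ $\mu:=\max_{i\in A}|F_i|$, where $A$ is the set of people not yet eliminated and $|F_i|$ is the bound updated through Cases~1, 2(a) and 2(b). The goal is then to show that one elimination step replaces $\mu$ by at most $a\,\mu$. Since each elimination also produces multipliers $\overline{\lambda_{\cdot i}}\le(\sqrt2/\sin(2\pi/N))|F_i|\le a\,\mu$, and there are at most $N-1$ steps, this factor is exactly what the concluding step of Proposition~\ref{lambdaq} needs.

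Next I would collect, directly from the three cases already treated, the transfer coefficients, i.e.\ the amount each neighbour's bound receives when person $i$ is removed. In Case~1 one neighbour gets $+|F_i|$; in Case~2(a) the two neighbours get $+\sqrt2\,|F_i|/\sin(2\pi/N)$ and $+|F_i|/\sin(2\pi/N)$; in Case~2(b) the three neighbours get $+\sqrt2\,|F_i|/\sin(2\pi/N)$, $+2|F_i|/\sqrt3$ and $+|F_i|/\sin(2\pi/N)$. Hence after the step every affected neighbour $j$ carries a bound of the form $|F_j|+c\,|F_i|$, and the transfer coefficient $c$ is at most $\sqrt2/\sin(2\pi/N)$ in all cases: indeed $1\le\sqrt2/\sin(2\pi/N)$, and $2/\sqrt3\le\sqrt2/\sin(2\pi/N)$ throughout the range $N\ge7$ in which Case~2(b) occurs.

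The core of the argument is then the elementary comparison $|F_j|+c\,|F_i|\le(1+c)\,\mu\le a\,\mu$. Using $|F_j|,|F_i|\le\mu$ gives the first inequality, and the second reduces to $1+\sqrt2/\sin(2\pi/N)\le 3/\sin(2\pi/N)$, that is to $\sin(2\pi/N)\le 3-\sqrt2$, which is trivially true since $\sin\le1<3-\sqrt2$. The remaining coefficients produce even smaller factors because $a\ge3$ forces both $1+1\le a$ and $1+2/\sqrt3\le a$. Finally, the individuals untouched by the step keep bounds $\le\mu\le a\,\mu$, so the maximum over the reduced set $A\setminus\{i\}$ is again at most $a\,\mu$, which is the assertion of the lemma.

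I do not anticipate a real obstacle, since this is precisely the ``straightforward computation'' of comparing each coefficient with $a$; the only genuine decision is to monitor the \emph{maximum} of the $|F_i|$ rather than their sum. That choice is forced: with the sum the per-step factor would already exceed $a$ in Case~2(b) for $N=7$ (the coefficients $\sqrt2/\sin(2\pi/N)+2/\sqrt3+1/\sin(2\pi/N)$ add up to more than $3/\sin(2\pi/N)$), whereas the maximum grows by at most $a$ because a single neighbour absorbs only one transfer term. Making this monitored quantity explicit, and checking it is compatible with the initialisation $\mu_0=\max_i|F_i|\le|\FF|$, is the whole of the subtlety.
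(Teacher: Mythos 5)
Your proof is correct, and since the paper itself omits the argument for Lemma~\ref{bilan} (dismissing it as ``a straightforward computation''), what you wrote is precisely the missing bookkeeping: in every case each neighbour of the eliminated person absorbs exactly one transfer term $c\,|F_i|$ with $c\le\sqrt2/\sin(2\pi/N)$ (this covers $c=1$, $c=1/\sin(2\pi/N)$ and $c=2/\sqrt3$ as well), so the monitored quantity grows per step by at most $1+\sqrt2/\sin(2\pi/N)\le 3/\sin(2\pi/N)$, and over at most $N-1$ steps this feeds correctly into the bound $\lambda_{ij}\le a^N|\FF|$ of Proposition~\ref{lambdaq}. The one point I would correct is your closing claim that monitoring the \emph{maximum} is forced. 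It is not: the lemma as literally stated, with $|\FF|$ the Euclidean norm of the vector of surviving right-hand sides, is also true. Writing the post-elimination vector of bounds as the old one restricted to $A\setminus\{i\}$ plus a perturbation supported on at most three slots, the triangle inequality gives a per-step factor $1+\bigl(c_j^2+c_k^2+c_l^2\bigr)^{1/2}$; in the worst case (Case 2(b)) this equals $1+\sqrt{3/\sin^2(2\pi/N)+4/3}$, which is $\le 3/\sin(2\pi/N)$ exactly when $\sin(2\pi/N)\le\sqrt{99}-9\approx 0.95$, and this holds since Case 2(b) only occurs for $N\ge 7$; Cases 1 and 2(a) give the weaker conditions $2\le 3/\sin(2\pi/N)$ and $\sin(2\pi/N)\le 3-\sqrt3$. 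So only the $\ell^1$-aggregate (the sum) fails, as your $N=7$ computation correctly shows; both the $\ell^2$ and $\ell^\infty$ versions contract by the factor $3/\sin(2\pi/N)$, and either one completes the proof of Proposition~\ref{lambdaq}.
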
 
\noi This lemma ends the proof of Proposition~\ref{lambdaq}.
\end{proof}
\begin{rmrk}
A similar result can be proved in the polydisperse situation (where the radii are not assumed equal). Case 2 presents also more possibilities because the maximum number of neighbours that a person can have, depends on the radii $r_i$. 
By denoting $n_v$ this number, a simple computation shows that 
$$\dsp n_v \leq \dsp \frac{\pi}{
 \arcsin\left( \dsp \frac{r_{\mathrm{min}}}{r_{\mathrm{max}}+r_{\mathrm{min}}}\right)
},$$
where $r_{\mathrm{min}} =\min r_i $ et $r_{\mathrm{max}} =\max r_i$.
With these notations, it can be proved that in the polydisperse case (see~\cite{thesejv}) $$  \dsp \forall \bflambda \in \Lambda_{q, F}, \ 
\forall i<j, \  \lambda_{ij} \leq |\FF|\ b^N  
\textmd{ with } b = \dsp \dsp \frac{2\sqrt{n_v}}{\min\left(\sin \left( \dsp \frac{\pi}{n_v +1}\right) , \sin\left( \dsp \frac{2\pi}{N} \right)\right)}. $$
\end{rmrk}

\begin{rmrk} The radii can be allowed to depend on time in a Lipschitz way. This new problem fits into the framework of sweeping process considered in this paper provided that 
$$ r:= \inf_{t\in [0,T]} \ \inf_{i\in\{1,...,N\}} r_{i}(t) >0.$$
Indeed Proposition \ref{propinegtrianginv} still holds with $I_\rho(t,\qqq)$, $\rho<r$ by replacing $\gamma$ by another constant.
\end{rmrk}

\bibliographystyle{plain}
\bibliography{biblio}

\end{document}